\DeclareFontFamily{U}{mathx}{\hyphenchar\font45}
\DeclareFontShape{U}{mathx}{m}{n}{
      <5> <6> <7> <8> <9> <10> gen * mathx
      <10.95> mathx10 <12> <14.4> <17.28> <20.74> <24.88> mathx12
      }{}
\DeclareSymbolFont{mathx}{U}{mathx}{m}{n}
\DeclareMathSymbol{\bigboxplus}   {1}{mathx}{"D0}
\newcommand\added[1]{{#1}}
\newtheorem*{theorem*}{Theorem}
\newtheorem{theorem}{Theorem}
\newtheorem{lemma}[theorem]{Lemma}
\newtheorem{proposition}[theorem]{Proposition}
\newtheorem{corollary}[theorem]{Corollary}
\theoremstyle{definition}
\newtheorem{definition}[theorem]{Definition}
\theoremstyle{remark}
\newtheorem{remark}[theorem]{Remark}
\newtheorem{question}[theorem]{Question}
\newtheorem{example}[theorem]{Example}
\numberwithin{theorem}{section}
\newcommand\cA{{\mathcal A}}
\newcommand\cC{{\mathcal C}}
\newcommand\cH{{\mathcal H}}
\newcommand\cI{{\mathcal I}}
\newcommand\cL{{\mathcal L}}
\newcommand\cM{{\mathcal M}}
\newcommand\cP{{\mathcal P}}
\newcommand\cR{{\mathcal R}}
\newcommand\cS{{\mathcal S}}
\newcommand\cV{{\mathcal V}}
\newcommand\CC{{\vmathbb C}}
\newcommand\FF{{\vmathbb F}}
\newcommand\HH{{\vmathbb H}}
\newcommand\KK{{\vmathbb K}}
\newcommand\NN{{\vmathbb N}}
\newcommand\PP{{\vmathbb P}}
\newcommand\QQ{{\vmathbb Q}}
\newcommand\RR{{\vmathbb R}}
\renewcommand\SS{{\vmathbb S}}
\newcommand\TT{{\vmathbb T}}
\newcommand\ZZ{{\vmathbb Z}}
\newcommand\HS{\cH^\succ}
\newcommand\cHS{\cH^\succeq}
\newcommand\bp{{\bm p}}
\newcommand\bq{{\bm q}}
\newcommand\br{{\bm r}}
\newcommand\bu{{\bm u}}
\newcommand\bv{{\bm v}}
\newcommand\bw{{\bm w}}
\newcommand\bx{{\bm x}}
\newcommand\by{{\bm y}}
\newcommand\bz{{\bm z}}
\newcommand\bX{{\bm X}}
\newcommand\balpha{{\bm \alpha}}
\newcommand\blambda{{\bm \lambda}}
\newcommand{\0}{{\vmathbb 0}}
\newcommand{\1}{{\vmathbb 1}}
\newcommand\SetOf[2]{\left\{\left.#1\vphantom{#2}\ \right|\ #2\vphantom{#1}\right\}}
\DeclareMathOperator{\conv}{conv}
\DeclareMathOperator{\cone}{cone}
\DeclareMathOperator{\lc}{lc}
\DeclareMathOperator{\lt}{lt}
\DeclareMathOperator{\lexp}{lp}
\DeclareMathOperator{\supp}{supp}
\DeclareMathOperator{\sgn}{sgn}
\DeclareMathOperator{\sep}{sep}
\DeclareMathOperator{\val}{val}
\DeclareMathOperator{\sval}{sval}
\DeclareMathOperator{\fval}{fval}
\title{Convex geometry over ordered hyperfields}
\author{James Maxwell\thanks{School of Mathematics, University of Bristol. E-mail: \texttt{james.maxwell@bristol.ac.uk}} \and Ben Smith\thanks{School of Mathematical Sciences, Lancaster University. E-mail: \texttt{b.smith9@lancaster.ac.uk}}}
\begin{document}

\maketitle

\begin{abstract}
 We initiate the study of convex geometry over ordered hyperfields.
 We define convex sets and halfspaces over ordered hyperfields, presenting structure theorems over hyperfields arising as quotients of fields.
 We prove hyperfield analogues of Helly, Radon and Carathéodory theorems.
 We also show that arbitrary convex sets can be separated via hemispaces.
 Comparing with classical convexity, we begin classifying hyperfields for which halfspace separation holds.
 In the process, we demonstrate many properties of ordered hyperfields, including a classification of stringent ordered hyperfields.
\end{abstract}

\section{Introduction}

Hyperfields are structures that generalise fields by allowing the addition operation to be multivalued, i.e. the sum $a\boxplus b$ may be a set rather than a singleton.
Multivalued operations were first considered by Marty~\cite{MAR} in the context of hypergroups, but it was Krasner who first introduced the notion of a hyperfield~\cite{KRA:57, KRA:83}.
They have since been studied within the context of a number of mathematical problems in which multivalued addition is required, including number theory~\cite{CC}, real fields~\cite{Mars} and more recently matroid theory~\cite{BB}.
They rose to prominence within tropical geometry via the articles of Viro~\cite{VIR,VIR2}, who noted that enriching the tropical semiring $(\RR, \max, +)$ with a hyperfield structure turns tropical varieties in genuine algebraic varieties.
Since then, hyperfields have proved to be an invaluable tool within tropical geometry~\cite{JJ,JJb,JSY,JM,BL}.

Convex geometry over the tropical semiring has been studied under the guise of `max-plus linear algebra' for over 30 years~\cite{cohen+gaubert+quadrat, butkovic}.
It has applications to a number of areas of mathematics, including commutative algebra~\cite{Develin+Yu:07, DJS, ADS}, phylogenetics~\cite{LSTY:17, Develin+sturmfels} and economics~\cite{Joswig:17}.
However, possibly its most striking application is its connection to computational complexity via two different problems.
The first is via the complexity of linear programming, where tropical methods have had a number of recent successes constructing pathological linear programs for various interior point methods~\cite{ABGJ:21, AGV:22}.
The second is via mean payoff games, known to sit in complexity class NP $\cap$ co-NP but for which no polynomial time algorithm is known. 
The feasibility problem for tropical linear programs is equivalent to mean payoff games~\cite{AGG:12}.
These two connections have lead to various generalised applications, such as semidefinite programming and stochastic mean payoff games~\cite{AGS:18,AGS:20}.

Despite these successes, there is much missing from the theory of tropical convexity that is present in classical convexity.
A key issue is that the tropical semiring has an implicit non-negativity constraint not present in classical convexity.
To rectify this, there has been a recent program of work to understand convex structures over the signed tropical numbers~\cite{LV,LS}.
While some of the theory carries over, signed tropical convexity exhibits some seemingly odd behaviour such as intersections of halfspaces not necessarily being convex.
This is the motivation for this work: we would like to understand what features of signed tropical convexity hold for general hyperfields, and what is unique to the signed tropical hyperfield.
Moreover, we believe that initiating this study will open the door to other `tropical-like' hyperfields that may be exploited in further applications.

\subsection{Our results}
The two classes of hyperfields we focus on throughout are stringent hyperfields and quotient hyperfields.
A hyperfield $\HH$ is \emph{stringent} if $a \boxplus b$ is a (non-singleton) set if and only if $a = -b$.
A \emph{quotient} hyperfield is a hyperfield that arises as the field quotient $\HH = \FF/U$ where $\FF$ is a field and $U \subseteq \FF^\times$ is a multiplicative subgroup of the group of units.
Both of these families of hyperfields have additional structure that make them easier to work with.
Moreover, they satisfy the containment given in Figure~\ref{fig:hyp+classes}.
\begin{figure}
    \[
\{\text{fields}\} \, \subsetneq \, \{\text{stringent hyperfields}\} \, \subsetneq \, \{\text{quotient hyperfields}\} \, \subsetneq \, \{\text{hyperfields}\}
\]
    \caption{Containment of classes of hyperfields.}
    \label{fig:hyp+classes}
\end{figure}

The key objects in our study are \emph{ordered hyperfields}, a hyperfield $\HH$ with an \emph{ordering} $\HH^+$, a distinguished set of positive elements satisfying
\[
\HH^+ \boxplus \HH^+ \subseteq \HH^+ \quad , \quad \HH^+ \odot \HH^+ \subseteq \HH^+ \quad , \quad \HH = \HH^+ \sqcup \{\0\} \sqcup \HH^- \, .
\]
These are the focus of Section~\ref{sec:ordered+hyperfields}.
Using results in~\cite{BS}, we prove a classification of ordered stringent hyperfields (Proposition~\ref{prop:ord+stringent+class}).
Explicitly, we show they can be written as a semi-direct product $\HH\rtimes G$, where $G$ is an ordered abelian group and $\HH$ is the sign hyperfield $\SS$ or an ordered field $\FF$.
We also show the strict containments in Figure~\ref{fig:hyp+classes} hold for ordered hyperfields as well, except the final case where it remains open to find an ordered hyperfield that does not arise as a quotient.
Note that we do not use an order-theoretic approach to ordered hyperfields; we discuss this approach in Appendix~\ref{sec:order+relations} and its shortcomings for convex geometry.


In Section~\ref{sec:convex+sets}, we introduce the notions of convex and conic sets over ordered hyperfields.
We prove that they satisfy many of the properties that one would want for convex sets, including being closed under intersections, projections and products.
We also give a structure theorem for convex sets over a quotient hyperfield $\HH=\FF/U$ as the union of lifts to convex sets over the ordered field $\FF$.
\begin{theorem*}[\ref{thm:quotient+structure}]
    Let $\HH = \FF/U$ be an ordered quotient hyperfield with quotient map $\tau\colon \FF \rightarrow \HH$.
    Then for any $T \subseteq \FF^d$, we have
    \[
    \conv(\tau(T)) = \bigcup_{\tau(T) = \tau(T')} \tau(\conv(T'))
    \]
\end{theorem*}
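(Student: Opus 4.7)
The plan is to prove the two inclusions separately; the ``$\supseteq$'' direction should be a short direct check, while ``$\subseteq$'' is where the real work lies, and will drive the choice of the lift $T'$.

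For ``$\supseteq$'', fix any $T'$ with $\tau(T') = \tau(T)$ and any $z \in \conv(T')$. I would write $z$ as a finite convex combination $\sum_i \mu_i z_i$ in $\FF^d$ with $\mu_i \in \FF^+$ and $\sum_i \mu_i = 1$, and then verify, coordinate by coordinate, that $\tau(z) \in \boxplus_i \tau(\mu_i) \odot \tau(z_i)$, with $1 = \tau(1) \in \boxplus_i \tau(\mu_i)$. This relies on $\tau(\FF^+) = \HH^+$ and on the fact that in a factor hyperfield the hypersum $\tau(a) \boxplus \tau(b)$ is defined to contain $\tau(a+b)$. Since the $\tau(z_i) \in \tau(T') = \tau(T)$, this exhibits $\tau(z)$ as a hyperconvex combination of points in $\tau(T)$, so $\tau(z) \in \conv(\tau(T))$.

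For ``$\subseteq$'', fix $x \in \conv(\tau(T))$. I would express $x$ as a hyperconvex combination $x \in \boxplus_{i=1}^n \lambda_i \odot x_i$ with $x_i \in \tau(T)$, $\lambda_i \in \HH^+$, and $1 \in \boxplus_i \lambda_i$. Then I would lift: choose $y_i \in T$ with $\tau(y_i) = x_i$, and use the membership $1 \in \boxplus_i \lambda_i$ to pick $\mu_i \in \FF^+$ with $\tau(\mu_i) = \lambda_i$ and $\sum_i \mu_i = 1$. The latter step uses that for an ordered factor hyperfield $U \subseteq \FF^+$, so a positive representative of $\lambda_i$ in $\FF$ can be rescaled by an element of $U$ without leaving $\FF^+$. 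Finally I would build $T'$ out of coordinate-wise $U$-adjustments of the $y_i$ and show the ordinary combination $\sum_i \mu_i y_i'$ sits in $\conv(T')$ and maps under $\tau$ to $x$.

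The main obstacle is that $\boxplus_i \lambda_i \odot x_i$ is computed coordinate by coordinate in $\HH^d$, so the witnesses certifying $x^{(j)} \in \boxplus_i \lambda_i \odot x_i^{(j)}$ need not agree across coordinates $j$. The freedom to vary $T'$ over all lifts of $\tau(T)$ is precisely what accommodates this: for each coordinate $j$ I find $s_{i,j} \in U$ such that $\sum_i \mu_i\, s_{i,j} (y_i)^{(j)} \in \tau^{-1}(x^{(j)})$, and then absorb these scalings by setting $(y_i')^{(j)} := s_{i,j} (y_i)^{(j)}$. Because $s_{i,j} \in U$, each $y_i'$ still satisfies $\tau(y_i') = \tau(y_i)$, so replacing the $y_i$ by $y_i'$ in $T$ produces a valid $T'$, and the coordinate-wise hyper-witnesses have been aligned into a single classical convex combination $\sum_i \mu_i y_i' \in \conv(T')$ with $\tau$-image $x$.
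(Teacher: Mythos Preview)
Your proposal is correct and follows essentially the same approach as the paper. The paper handles ``$\supseteq$'' via a general pushforward lemma for order-preserving homomorphisms (their Proposition~\ref{prop:cone+containment}), and for ``$\subseteq$'' it does precisely what you describe: lift the scalars so they sum to $1$ in $\FF$, then absorb the coordinate-dependent $U$-witnesses into the points by setting $r_{ij} = p_{ij}\cdot v_{ij} w_i^{-1}$, which is exactly your $s_{i,j}(y_i)^{(j)}$ move.
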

\noindent We close this section by proving three classical theorems in convex geometry for quotient hyperfields: Radon's Theorem, Helly's Theorem and Carath\'eodory's Theorem (Theorems~\ref{thm:radon}, \ref{thm:helly}, \ref{thm:caratheodory+hyp}).
We show these by pushing the results over ordered fields to quotient hyperfields via properties of the quotient map.

A crucial theorem within convex geometry is the Hyperplane Separation Theorem, which states given a polyhedron $P$ and a point $\bq$ not contained in $P$, there exists a hyperplane separating $P$ from $\bq$.
This theorem leads to external representations of polyhedra as intersections of closed halfspaces, as well as internal representations via convex hulls of vertices and rays.
This ability to jump between internal and external notions of convexity is very fruitful, and a natural tool to develop for hyperfield convexity.
Notions of external convexity, and their obstacles, are the focus of the remaining sections.

Given their prolific applications in convex geometry, we introduce and study halfspaces over hyperfields in Section~\ref{sec:halfspace}.
Given an ordered hyperfield $\HH$ and an affine polynomial $\phi \in \HH[X_1, \dots, X_d]$, we define the associated \emph{open} and \emph{closed halfspace} respectively as
\[
\HS(\phi):= \{ \bp \in \HH^d \, : \, \phi(\bp) \subseteq \HH^+\} \quad , \quad
\cHS(\phi) := \{ \bp \in \HH^d \, : \, \phi(\bp) \cap (\HH^+ \cup \{\0\} ) \neq \emptyset\} \, .
\]
We show that open halfspaces are convex, but that in general finitely generated convex sets are not equal to the intersection of the open halfspaces containing them, merely contained in them (Proposition~\ref{prop:conv+in+open} \& Example~\ref{ex:S-non-open-sep}).
This marks a departure even from signed tropical convexity.
We show that closed halfspaces behave even worse in general: they are not convex and do not decompose into an open halfspace and the variety defined by $\phi$.
In both cases, we give a structure theorem for halfspaces over quotient hyperfields in terms of their lifts over fields.
We show that open halfspaces over $\HH = \FF/U$ are equal to the image of the intersection of \emph{all} lifted open halfspaces over $\FF$ (Theorem \ref{thm:quotient+open}). 
We also show that closed halfspaces are equal to the image of \emph{any} lifted closed halfspace over $\FF$ (Theorem \ref{clsd_hs_eql}).



Given that halfspaces over hyperfields behave worse than over fields, we shift our focus to a different object for separation in Section~\ref{sec:kakutani}.
We consider \emph{hemispaces} over hyperfields, convex sets whose complement is also convex.
Our main theorem of this section is a hemispace separation theorem over quotient hyperfields.
\begin{theorem*}[\ref{thm:kakutani}]
    Let $\HH = \FF/U$ be an ordered quotient hyperfield.
    If $A,B \subseteq \HH^d$ are two disjoint convex sets, then there exists a hemispace $X \subseteq \HH$ such that $A \subseteq X$ and $B \subseteq \HH^d \setminus X$.
\end{theorem*}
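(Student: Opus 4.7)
My plan is to adapt the classical Kakutani hemispace-separation argument to this setting, combining Zorn's lemma applied in $\HH^d$ with the lifting principle from Theorem~\ref{thm:factor+structure}.

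Let $\cP$ denote the poset of pairs $(X, Y)$ of disjoint convex subsets of $\HH^d$ with $A \subseteq X$ and $B \subseteq Y$, ordered by componentwise inclusion. This poset contains $(A, B)$, and every chain has an upper bound by componentwise union: convex combinations involve only finitely many points, so lie within some single member of the chain, and disjointness is preserved in the limit. Hence Zorn yields a maximal element $(X, Y)$, and it suffices to show $X \cup Y = \HH^d$, since then $Y = \HH^d \setminus X$ and $X$ is a hemispace. Suppose otherwise and pick $\bp \in \HH^d \setminus (X \cup Y)$. By maximality, $\conv(X \cup \{\bp\})$ must meet $Y$ and $\conv(Y \cup \{\bp\})$ must meet $X$; let $\bq_1, \bq_2$ be respective witnesses.

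I would then transfer to the ordered field $\FF$. Applying Theorem~\ref{thm:factor+structure} with $T = \tau^{-1}(X)$ yields $\conv(\tau^{-1}(X)) \subseteq \tau^{-1}(X)$, so $\tau^{-1}(X)$ is a $U$-invariant convex subset of $\FF^d$; likewise for $\tau^{-1}(Y)$. Fix a lift $\tilde\bp \in \tau^{-1}(\bp)$. The structure theorem then furnishes lifts $\tilde\bq_i \in \tau^{-1}(\bq_i)$ expressed as $\FF$-convex combinations of elements of $\tau^{-1}(X) \cup U\tilde\bp$ and of $\tau^{-1}(Y) \cup U\tilde\bp$, respectively. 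Aggregating the $\tau^{-1}(X)$-contribution into a single representative (using convexity) and the $U\tilde\bp$-contribution into a single scalar multiple of $\tilde\bp$ produces
\[
\tilde\bq_1 = \lambda\tilde\bx + s\tilde\bp \in \tau^{-1}(Y), \qquad \tilde\bq_2 = \mu\tilde\by + r\tilde\bp \in \tau^{-1}(X),
\]
for some $\tilde\bx \in \tau^{-1}(X)$, $\tilde\by \in \tau^{-1}(Y)$, non-negative $\lambda, \mu \in \FF$, and $s, r \in \FF$ arising as non-negative combinations of elements of $U$. Scaling these identities so that the coefficients of $\tilde\bp$ agree and then subtracting eliminates $\tilde\bp$, yielding a single point of $\FF^d$ simultaneously expressed as an $\FF$-convex combination of elements of $\tau^{-1}(X)$ and of elements of $\tau^{-1}(Y)$. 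Its image under $\tau$ then lies in $X \cap Y$, contradicting disjointness.

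The principal obstacle I foresee is precisely this elimination step. Unlike in the classical argument, the coefficient of $\tilde\bp$ on each side is not a single scalar but a non-negative $\FF$-combination of $U$-elements, so the identities above are not, on their face, convex combinations in $\FF^d$. Non-vanishing of $s$ and $r$ is automatic, since otherwise $\bq_1 \in X$ or $\bq_2 \in Y$ directly contradicts disjointness. However, verifying that the post-elimination equation remains a bona fide convex combination of elements of $\tau^{-1}(X)$ and, separately, of $\tau^{-1}(Y)$ requires careful bookkeeping of positive weights. This is where the order structure of $\FF$ and the containment $U \subseteq \FF^+$ (implicit in $\HH$ being ordered) must be used essentially; equivalently, the argument amounts to a $U$-equivariant refinement of the classical Kakutani step applied to the $U$-invariant hulls $\conv(\tau^{-1}(X) \cup U\tilde\bp)$ and $\conv(\tau^{-1}(Y) \cup U\tilde\bp)$.
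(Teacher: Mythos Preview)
Your Zorn-plus-contradiction framework is correct and is essentially the classical Kakutani argument; the paper takes a closely related but cleaner route by first proving the Pasch property (five points only) and then invoking the general equivalence Pasch $\Leftrightarrow$ Kakutani for $2$-ary convexities. The real issue lies exactly where you flagged it, but the obstruction is worse than you diagnose.

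You write the lifts of $\bp$ as elements of ``$U\tilde\bp$'' and then aggregate their contribution into a scalar multiple $s\tilde\bp$ with $s\in\FF$ a non-negative combination of elements of $U$. This is not what the structure theorem produces. For $d>1$ the preimage $\tau^{-1}(\bp)$ consists of \emph{coordinate-wise} $U$-scalings $(u_1p_1,\dots,u_dp_d)$ with the $u_i\in U$ chosen independently; it is not the one-parameter set $\{u\tilde\bp : u\in U\}$. Consequently, after lifting and collapsing the $\tau^{-1}(X)$-part by convexity you obtain
\[
\tilde\bq_1=\lambda\tilde\bx+\bs*\tilde\bp,\qquad \tilde\bq_2=\mu\tilde\by+\br*\tilde\bp,
\]
where $\bs,\br\in(\FF^+)^d$ act coordinate-wise (here $s_\ell=\sum_j b_j'u_{j,\ell}$, and these differ across coordinates). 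No single scalar rescaling will make the $\tilde\bp$-terms cancel, and coordinate-wise rescaling by $\br$ or $\bs$ destroys membership of $\tilde\bq_1,\tilde\bq_2,\tilde\bx,\tilde\by$ in $\tau^{-1}(X)$ or $\tau^{-1}(Y)$, since $\br,\bs$ need not lie in $U^d$. So the elimination step, as written, does not go through.

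The paper's Pasch proof is precisely the device that repairs this. Because Pasch involves a fixed five-point configuration, each of the two segment relations lifts with a \emph{single} occurrence of the apex $\br$, yielding two lifts $\tilde\br_1,\tilde\br_2\in\tau^{-1}(\br)$ related by one coordinate-wise element $\bu\in U^d$. Applying $\bu$ to one lift aligns the apices without leaving the relevant preimages, after which ordinary Pasch over $\FF$ finishes. In your argument, the analogue would be to first reduce (inside $\HH$) the witnesses $\bq_1\in\conv(X\cup\{\bp\})$ and $\bq_2\in\conv(Y\cup\{\bp\})$ to two-point combinations $\bq_1\in\conv(\bx,\bp)$ and $\bq_2\in\conv(\by,\bp)$ with $\bx\in X$, $\by\in Y$; but this is join--hull commutativity, which is not established for general ordered hyperfields and is itself essentially a Pasch-type statement. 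Without that reduction, the coordinate-wise obstruction above remains.
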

\noindent An immediate corollary is that any convex set over a quotient hyperfield is equal to the intersection of the hemispaces containing it.
We end the section by characterising hemispaces over $\HH = \FF/U$ in terms of hemispaces over $\FF$ that are closed under an action of $U$.

While hemispace separation works over any quotient hyperfield, it was shown in~\cite{LV} that there are hyperfields for which the (open) Hyperplane Separation Theorem does hold, namely the signed tropical hyperfield.
We close our investigation by beginning to classify for which hyperfields the Hyperplane Separation Theorem holds.
We restrict our focus to stringent hyperfields, all of which are of the form $\HH \rtimes G$ where $\HH$ is either the sign hyperfield $\SS$ or an ordered field $\FF$.
\begin{theorem*}[\ref{thm:separation}]
    Let $\HH \rtimes G$ be an ordered stringent hyperfield with a dense ordering.
    Consider a finitely generated convex set $\conv(T) \subseteq \HH^d$ and point $\bp \notin \conv(T)$.
    There exists an open halfspace $\HS(\phi)$ such that $\conv(T) \subseteq \HS(\phi)$ and $\bp \notin \HS(\phi)$ if either
    \begin{itemize}
        \item $\HH = \SS \rtimes G$,
        \item $\HH = \FF \rtimes G$, and $T$ and $\bp$ are sufficiently generic.
    \end{itemize}
\end{theorem*}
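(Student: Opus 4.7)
The plan is to reduce to classical hyperplane separation over an ordered field by realising $\HH$ as a factor hyperfield and lifting the data. By Proposition~\ref{prop:ord+stringent+class} we may take $\HH = \SS \rtimes G$ or $\HH = \FF \rtimes G$; both arise as factor hyperfields of the form $\KK/U$ for an ordered field $\KK$ (for example a suitable Hahn-series construction), so the structural results of Sections~\ref{sec:convex+sets} and~\ref{sec:halfspace} apply.

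Since $\bp \notin \conv(T)$, Theorem~\ref{thm:factor+structure} provides lifts $T' \subseteq \KK^d$ of $T$ and $\bp' \in \KK^d$ of $\bp$ with $\bp' \notin \conv(T')$. The classical Hyperplane Separation Theorem over the ordered field $\KK$ then yields an affine polynomial $\phi' \in \KK[X_1,\dots,X_d]$ satisfying $\phi'(\conv(T')) \subseteq \KK^+$ and $\phi'(\bp') \in \KK^-$. Writing $\phi$ for the image of $\phi'$ under the coefficient-wise quotient map $\tau$, the proposed separator is $\HS(\phi)$, and Theorem~\ref{thm:factor+open} gives precise control over how lifts of $\phi$ determine $\HS(\phi)$.

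The crux is to verify that $\HS(\phi)$ actually separates $\conv(T)$ from $\bp$, and here the two cases diverge. When $\HH = \SS \rtimes G$, the subgroup $U \subseteq \KK^\times$ consists entirely of positive elements (the sign hyperfield records only signs), so $\sgn \phi'(\bq')$ is constant across the lifts $\bq'$ of a fixed $\bq \in \HH^d$. Descent through $\tau$ therefore preserves both $\phi'(\conv(T')) \subseteq \KK^+$ and $\phi'(\bp') \in \KK^-$, yielding the required separation almost immediately. When $\HH = \FF \rtimes G$, this fails: lifts of a single point can produce $\phi'$-values differing by any element of $U$, which may be of either sign. The genericity hypothesis is designed to repair this by forcing the monomial of $\phi'$ of maximal $G$-value at each test point to be unique, so that the sign of $\phi(\bq)$ is determined by that leading monomial independently of the chosen lift.

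The main obstacle is producing a $\phi'$ that is simultaneously a classical separator and sufficiently generic, in the sense that it exhibits no tropical ties at any of the finitely many points of $T \cup \{\bp\}$. I expect to handle this by exploiting the density of the ordering on $G$: any tropical tie at a finite set of test points can be broken by an arbitrarily small perturbation of the $G$-parts of the coefficients of $\phi'$, and such perturbations are too small to alter the signs of the finitely many values $\phi'(\bq')$ and $\phi'(\bp')$ in $\KK$. Hence a sufficiently generic $\phi'$ can always be chosen, and its image $\phi$ under $\tau$ defines the desired open halfspace.
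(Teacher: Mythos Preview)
Your lift-and-separate strategy has a genuine gap in the $\SS \rtimes G$ case. The assertion that $\sgn \phi'(\bq')$ is constant across lifts $\bq'$ of a fixed $\bq$ is false. Take $\KK = \FF[[t^G]]$, $\phi' = X_1 - 1$, and $\bq = (\1,0) \in \SS \rtimes G$; the constants $2$ and $1/2$ are both lifts of $\bq$, yet $\phi'(2) = 1 > 0$ while $\phi'(1/2) = -1/2 < 0$. The fact that $U$ consists of positive elements only guarantees that each \emph{monomial} $c'_i q'_i$ keeps its leading sign and leading $G$-value under change of lift; it says nothing about the sign of their \emph{sum} when the maximal $G$-values tie with opposite signs. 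In exactly that situation $\phi(\bq)$ contains $\0$, so $\bq \notin \HS(\phi)$, and knowing $\phi'(\bq') > 0$ in $\KK$ is no help: that positivity may come entirely from lower-order terms invisible to the hyperfield. Thus the separation $T' \subseteq \HS(\phi')$ over $\KK$ does not descend to $T \subseteq \HS(\phi)$.

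The perturbation idea does not repair this. Replacing $c'_i$ by $c'_i t^{\epsilon_i}$ to break a $G$-tie can flip the sign of $\phi'(\bq')$: if $c'_1 q'_1 = t$ and $c'_2 q'_2 = -t + 1$ with sum $1 > 0$, then nudging the exponent of the first term downward makes the leading term $-t$, hence the sum negative. The ties you must break are precisely the configurations in which the sign over $\KK$ is fragile, so you cannot both break the tie and preserve the classical separation. Moreover, for $\FF \rtimes G$ with $G$ discrete (allowed, since the ordering on $\FF \rtimes G$ is automatically dense) there are no small $G$-perturbations at all.

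The paper's proof is entirely different and does not lift. It develops Fourier--Motzkin elimination directly over the stringent hyperfield (Proposition~\ref{prop:strict+fm}), allowing coefficients to be realisable sets rather than singletons, and uses it to prove a Farkas-type dichotomy (Theorem~\ref{thm:strong+duality}): for $\cA$ over $\SS \rtimes G$, exactly one of the positive kernel $\ker(\cA)$ and the separation set $\sep(\cA)$ is nonempty. Separation then falls out of the geometric interpretation in Proposition~\ref{prop:ker+sep+geometry}. The genericity hypothesis in the $\FF \rtimes G$ case is precisely the condition that no set-valued coefficients arise during this elimination. The paper itself notes in Section~\ref{sec:exterior} that the lift-and-separate method you attempt is ``currently out of reach'' beyond $\SS$.
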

\noindent The proof techniques used are similar to those in~\cite{LV} for the signed tropical semiring.
We develop Fourier-Motzkin elimination for stringent hyperfields. 
We then use this to prove a Farkas-type result for hyperfields of the form $\SS \rtimes G$, which provably holds over $\FF \rtimes G$ in generic cases.
It remains an open question whether one can remove this genericity assumption.
We end with some possible applications and open questions, including vector axioms for matroids over ordered hyperfields.

\subsection*{Acknowledgments}
We thank Jeff Giansiracusa and Georg Loho for helpful conversations, and Mateusz Skomra for communicating related work.
We also thank Oliver Lorscheid and Nelly Villamizar for helpful comments on an earlier draft that appeared in JM's thesis, and Christos Massouros for a number of comments including Remark~\ref{rem:vector+space}.
\added{We also thank an anonymous reviewer for detailed feedback and comments.}
This work was partially carried out during JM's PhD, where they were supported by Swansea University, College of Science Bursary EPSRC DTP (EP/R51312X/1).
BS and JM were both also supported by the Heilbronn Institute for Mathematical Research. 

\section{Background on hyperfields}\label{sec:background}

Given a set $\HH$, we denote the set of non-empty subsets of $\HH$ by $\cP(\HH)^*$.
A \emph{hyperoperation} on $\HH$ is a map $\boxplus\colon\HH\times\HH\rightarrow \cP(\HH)^*$ that sends two elements $a,b \in \HH$ to a non-empty subset $a \boxplus b$ of $\HH$.
This map can be extended to strings of elements, or sums of subsets, via:
\begin{align*}
    a_1 \boxplus a_2 \boxplus \dots \boxplus a_k &:= \bigcup_{a' \in a_2 \boxplus \dots \boxplus a_k} a_1 \boxplus a' \quad , \quad a_i \in \HH \, , \\
    A \boxplus B &:= \bigcup_{a \in A, b \in B} a \boxplus b \quad , \quad A, B \subseteq \HH \, .
\end{align*}
The hyperoperation $\boxplus$ is called \emph{associative} if it satisfies
\[
(a \boxplus b) \boxplus c = a \boxplus (b \boxplus c) \quad , \quad \forall a,b,c \in \HH \, , 
\]
and is called \emph{commutative} if it satisfies
\[
a \boxplus b = b \boxplus a \quad \forall a,b \in \HH \, .
\]
\begin{definition}

A canonical hypergroup is a tuple $(\HH, \boxplus, \0)$ where $\boxplus$ is an associative and commutative hyperoperation satisfying:
\begin{itemize}
    \item (Identity) $\0 \boxplus a = \{a\}$ for all $x \in \HH$,
    \item (Inverses) For all $a \in \HH$, there exists a unique $(-a) \in \HH$ such that $\0 \in a \boxplus -a$,
    \item (Reversibility) $a \in b \boxplus c$ if and only if $c \in a \boxplus (-b)$.
\end{itemize}
\end{definition}
We note that the reversibility axiom is equivalent to $a \in b \boxplus c \Leftrightarrow -a \in -b \boxplus -c$.
In the literature, canonical hypergroups are sometimes called abelian hypergroups.

 It will also be convenient for us to define the inverse of a set: given $A \subseteq \HH$, its inverse is the set of inverses of its elements, i.e.
\[
-A = \left\{-a \mid a \in A\right\} \, .
\]
Note that by the alternative reversibility axiom, we have
\[
-(a \boxplus b) = -a \boxplus -b \, .
\]

\begin{definition}
A \emph{hyperring} is a tuple $(\HH, \boxplus, \odot, \0, \1)$ such that,
\begin{itemize}
\item $(\HH, \boxplus, \0)$ is a canonical hypergroup,
\item $(\HH, \odot, \1)$ is a commutative monoid,
\item $a\odot(b \boxplus c) = a\odot b \boxplus a \odot c$ for all $a,b,c \in \HH$,
\item $\0 \odot a = \0$ for all $a \in \HH$. 
\end{itemize}

A \emph{hyperfield} is a hyperring such that $(\HH, \odot, \1)$ is an abelian group, i.e. for every $a \in \HH^{\times} = \HH \setminus \{\0\}$, there exists a unique element $a^{-1}$ such that $a \odot a^{-1} = \1$.
\end{definition}

\begin{example}
A field $\FF$ can be viewed as a hyperfield in a trivial manner, where the hyper-addition is defined as $a \boxplus b = \{a+b\}$. 
\end{example}

\begin{example}
The \emph{Krasner hyperfield} is the set $\KK := \{\0,\1\}$ with the standard multiplication and hyper-addition defined as
\[
\0 \boxplus \0 = \0 \quad , \quad \0 \boxplus \1 = \1 \boxplus \0 = \1 \quad , \quad \1 \boxplus \1 = \{ \0 , \1\} \, .
\]
\end{example}

\begin{example}
The \emph{sign hyperfield} is the set $\SS :=\{-\1,\0,\1\}$ with standard multiplication and hyper-addition defined as
\[
\0 \boxplus a = a \quad , \quad a \boxplus a = a \quad , \quad \1 \boxplus -\1 = \{\0 , \1 , -\1\} \quad \forall a \in \SS .
\]
\end{example}

\begin{example}
The \emph{tropical hyperfield} is the set $\TT := \RR \cup \{-\infty\}$ with hyperfield operations
\begin{align*}
a \boxplus b &= 
\begin{cases}
\max(a,b)  &\text{if} \quad a \neq b\\
\{ c \, | \, c \le a\} \cup \{-\infty\} &\text{if} \quad a = b
\end{cases} \, , \\
a \odot b &= a + b \, .
\end{align*}
The additive and multiplicative identity elements are $\0 = -\infty$ and $\1 = 0$.
Note that we can obtain an isomorphic hyperfield by replacing $\max$ with $\min$ and $-\infty$ with $\infty$.
\end{example}

\begin{example}
The \emph{signed tropical hyperfield} (or real tropical hyperfield) is the set $\TT\RR:= (\{\pm 1\} \times \RR) \cup \{-\infty\}$, where $\{-1\} \times \RR$ is a `negative' copy of the tropical numbers.
Its hyperfield operations are
\begin{align*}
    (a_1,b_1) \boxplus (a_2,b_2) &=
    \begin{cases}
        (a_1,b_1), & \text{if} \, b_1 > b_2,\\
        (a_2,b_2), & \text{if} \, b_2 > b_1,\\
        (a_1,b_1), & \text{if} \, a_1 = a_2, \, \text{and} \, b_1 = b_2,\\
        \{(\pm 1, c) \mid c \leq b_1\} \cup \{-\infty\}, & \text{if} \, a_1 = -a_2, \, \text{and} \, b_1 = b_2 ,
    \end{cases} \\
    (a_1,b_1) \odot (a_2,b_2) &= (a_1 \cdot a_2, b_1 + b_2) \, .
\end{align*}
The additive and multiplicative identity elements are $\0 = -\infty$ and $\1 = (1,0)$.
As with $\TT$, we can obtain an isomorphic hyperfield by replacing $\max$ with $\min$ and $-\infty$ with $\infty$.
\end{example}

We now introduce two key classes of hyperfields that will be central to our study.
\begin{definition}
A hyperfield $\HH$ is \emph{stringent} if $a \boxplus b$ is a (non-singleton) set if and only if $a = -b$.
\end{definition}

The multivalued nature of hyper-addition can be made controllable by restricting to the class of stringent hyperfields, as sets only arise when summing additive inverses. As such, stringency is equivalent to $a \boxplus b$ being a non-singleton set if and only if it contains the element $\0$.
Certain notions in classical convexity are easier to extend to stringent hyperfields than general hyperfields, as we can work with single-valued addition except is specific circumstances.

\begin{definition}
A \emph{quotient hyperfield} is a hyperfield $\HH = \FF/U$ arising as the quotient of a field $(\FF,+,\cdot)$ by a multiplicative subgroup $U \subseteq \FF^{\times}$.
The elements of $\HH$ are cosets $\bar{a} := a \cdot U = \{ a \cdot u \mid u \in U\}$, and the operations are inherited from the field operations:
\begin{align*}
\bar{a} \boxplus \bar{b} &= \{ \bar{c} \mid c \in \bar{a}+\bar{b}\} \, ,\\
\bar{a} \odot \bar{b} &= \overline{a \cdot b}
\end{align*}
\end{definition}
Quotient hyperfields, which were introduced by~\cite{KRA:83}, allow one to use field structure to control the multivalued nature of hyper-addition.
We note that one can generalise this construction by replacing $\FF$ with a general hyperfield.
However one loses the benefits of field structure in this process, so we will restrict to this case.

We shall see in Section~\ref{sec:class-ord-strngt} that all stringent hyperfields are quotient hyperfields, but the converse does not hold.
There are also examples of hyperfields that do not arise as quotient hyperfields, the first of which was due to~\cite{MAS}.
\begin{example}\label{ex:quotient+hyperfields}
Note that all of the examples we have seen so far, $\FF, \KK, \SS, \TT$ and $\TT\RR$, are stringent.
For instance, the only sets we can obtain in each example are the following:
\begin{align*}
\KK &\colon \1 \boxplus \1 = \{\0,\1\} & \SS &\colon \1 \boxplus -\1 = \{-\1, \0, \1\} \\
\TT &\colon a \boxplus a = \{c \mid c \leq a\} \cup \{-\infty\} & \TT\RR &\colon (1,b) \boxplus (-1,b) = \{(\pm 1, c) \mid c \leq b\} \cup \{-\infty\}
\end{align*}
In each case, this occur by adding additive inverses together, as each set contains the zero element of its respective hyperfield.

Each of the examples we have see are also quotient hyperfields.
For instance, we can realise the following as the quotients
\[
\FF \cong \FF/\1 \quad , \quad \KK \cong \FF/\FF^{\times} \quad , \quad \SS \cong \RR/\RR_{>0}  \quad ,
\] 
where $\FF\neq \FF_2$ is any field.
The quotient construction for $\TT$ and $\TT\RR$ is slightly more involved: we shall see how to obtain them as quotients of fields of Hahn series in Section~\ref{sec:class-ord-strngt}.
\end{example}

\begin{example}
The \emph{phase hyperfield} $\PP = \SetOf{z \in \CC}{|z| = 1} \cup \{0\}$ is the hyperfield with operations
\begin{align*}
    z_1 \odot z_2 &= z_1\cdot z_2 \\
    z_1 \boxplus z_2 &= \begin{cases}
        z_1 & z_1 = z_2 \\
        \{z_1, 0, -z_1\} & z_2 = -z_1 \\
        \text{shortest open arc between } z_1, z_2 & \text{otherwise}
    \end{cases}
\end{align*}
It is not a stringent hyperfield, as the sum of two generic elements will be an arc.
However, it is a quotient hyperfield: it can be constructed as the quotient $\PP = \CC/\RR_{>0}$ where one works with the complex numbers but forgetting the modulus of an element.
\end{example}

\begin{definition}
A map between hyperfields $f\colon\HH_1\rightarrow\HH_2$ is a \emph{hyperfield homomorphism} if it satisfies
\begin{enumerate}
    \item $f(a \boxplus_1 b) \subseteq f(a) \boxplus_2 f(b)$ \, ,
    \item $f(a \odot_1 b) = f(a) \odot_2 f(b)$ \, ,
    \item $f(\1_1) = \1_2$ \, ,
    \item $f(\0_1) = \0_2$ \, .
\end{enumerate}
\end{definition}
As with field homomorphisms, the only element in the preimage of $\0_2$ is $\0_1$.
To see this, let $a\neq \0_1$ be an element such that $f(a) = \0_2$. Then
\[
f(\1_1) = f(a\odot_1 a^{-1}) = f(a)\odot_2 f(a^{-1}) = \0_2 \odot_2 f(a^{-1}) = \0_2 \, ,
\]
resulting in a contradiction.

\begin{example}
Every hyperfield has a trivial homomorphism to the Krasner hyperfield, given by
\begin{align*}
    t:\HH &\rightarrow \KK \quad , \quad t(a) =
    \begin{cases}
        \1, \quad \text{if} \quad a\neq \0_{\HH}. \\
        \0, \quad \text{if} \quad a=\0_{\HH}.
    \end{cases}
\end{align*}
\end{example}
\begin{example}\label{ex:sign+hom}
Both $\RR$ and $\TT\RR$ have a homomorphism to the sign hyperfield that takes the positive elements to $\1$ and negative elements to $-\1$:
 \begin{align*}
 \sgn&: \RR \rightarrow \SS, & \sgn(a) &= 
 \begin{cases}
 \1, & \text{if} \quad  a \in \RR_{>0}. \\
 -\1, & \text{if} \quad a \in \RR_{<0}. \\
 \0, & \text{if} \quad a=0.
 \end{cases} \\
     \sgn&:\TT\RR \rightarrow \SS, & \sgn(a) &= 
     \begin{cases}
      \1, & \text{if} \quad a = (1,b). \\
     -\1, & \text{if} \quad a = (-1,b). \\
     \0, & \text{if} \quad a = -\infty.
     \end{cases}
 \end{align*}
 The existence of this homomorphism is due to $\RR$ and $\TT\RR$ being \emph{ordered hyperfields}.
 We shall see in Section~\ref{sec:ordered+hyperfields} that the existence of such a homomorphism characterises ordered hyperfields.
 \end{example}
 \begin{example}
 Given a quotient hyperfield $\HH=\FF/U$, there is a natural hyperfield homomorphism
 \begin{equation}
     \tau:\FF \rightarrow \HH \quad , \quad a \mapsto \bar{a} \, .
 \end{equation}
 \added{We call this map the \emph{quotient map}.}
 This map will be a key example of a hyperfield homomorphism throughout. 
\end{example}

\begin{definition}
The set of polynomials in $d$ variables over a hyperfield $\HH$ will be denoted $\HH[\bX] = \HH[X_1 \, , \dots , \, X_d]$, where elements of this set are defined as
\begin{equation}\label{hyperpolynomial}
\phi(X_1 \, , \dots , \, X_d) :=  \bigboxplus_{I \in \cI} c_I \odot X_1^{\odot i_1} \odot \, \cdots \, \odot X_d^{\odot i_d} = \bigboxplus_{I\in \cI} c_I \odot \bX^{\odot I},
\end{equation}
where $\cI \subseteq \ZZ_{\geq 0}^d$ finite and $c_I \in \HH$.
Each polynomial defines a function from $\HH^d$ to \added{the non-empty subsets of} $\HH$ given by evaluation.
\end{definition}
We will be primarily concerned with \emph{affine} and \emph{linear} polynomials; recall that a polynomial is affine if it is of degree one, and linear if it is affine with no constant term.

\begin{remark}
    The set of polynomials over a field $\FF[\bX]$ has the structure of a ring.
    Over a general hyperfield, we cannot imbue $\HH[\bX]$ with any additional algebraic structure: attempting to do so results in multivalued multiplication that is sometimes non-associative~\cite[Appendix A]{BL}.
\end{remark}

Let $f:\HH_1 \rightarrow \HH_2$ be a hyperfield homomorphism. 
This induces a map of polynomials $f_* : \HH_1[\bX] \rightarrow \HH_2[\bX]$ defined as
\[
\phi = \bigboxplus_1  c_I \odot_1 \bX^I \longmapsto f_*(\phi)  = \bigboxplus_2 f(c_I) \odot_2 \bX^I \, .
\]
We call $f_*(\phi)$ the \emph{push-forward} of $\phi$.
By properties of hyperfield homomorphisms, we observe that
\begin{equation}\label{eq:pushforward+hom}
f(\phi(\bp)) = f\left(\bigboxplus_1  c_I \odot_1 \bp^I\right) \subseteq \bigboxplus_2 f(c_I) \odot_2 (f(\bp))^I = f_*(\phi)(f(\bp)) \, .
\end{equation}
This observation will be regularly exploited throughout.

Finally, we note that we will be considering geometric objects in the space $\HH^d$.
Its elements are $d$-tuples, that we denote in bold to avoid confusion.
One can extend the hyper-addition $\boxplus$ to tuples, and multiplication $\odot$ to scalars acting on tuples as follows:
\begin{align*}
\boxplus\colon\HH^d \times \HH^d &\rightarrow \cP(\HH^d)^* & \bp \boxplus \bq &= \bigcup_{r_i \in p_i \boxplus q_i} (r_1,\dots,r_d) \, , \\
\odot \colon \HH \times \HH^d &\rightarrow \HH^d & a \odot {\bp} &= (a\odot p_1, \dots, a \odot p_d) \, .
\end{align*}
Moreover, it has a zero `vector' $\underline{\0} = (\0, \dots, \0)$ that acts as an additive identity.

We also note that given a hyperfield homomorphism $f\colon\HH_1 \rightarrow \HH_2$, we can extend it to map $f\colon\HH_1^d \rightarrow \HH_2^d$ by applying it entrywise.
It is trivial to check that $f$ has the following desirable properties:
\[
f(\underline{\0}_1) = \underline{\0}_2 \quad , \quad f(\bp \boxplus \bq) \subseteq f(\bp) \boxplus f(\bq) \quad , \quad f(a \odot \bp) = f(a) \odot f(\bp) \, .
\]

\begin{example}
 Throughout the work, we will use the space $\SS^2$ as a domain to visualise examples.
 Figure~\ref{fig:S2} presents a schematic of $\SS^2$, with corresponding labels.
 To reduce the complication of the schematic for the remainder of the work, the labels are omitted.
 \begin{figure}
 \centering
     \includegraphics[]{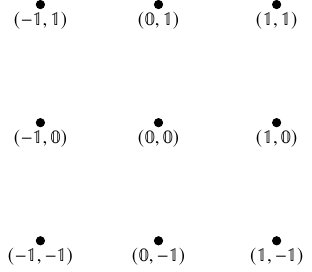}
     \caption{Schematic representation of the space $\SS^2$.}
     \label{fig:S2}
 \end{figure}
\end{example}

\begin{remark}\label{rem:vector+space}
While it is tempting to consider $\HH^d$ as a vector space over $\HH$, it does not satisfy distributivity of scalars.
Explicitly, for any $a,b \in \HH$ and $\bp \in \HH^d$, we have $(a \boxplus b) \odot \bp \subseteq a \odot \bp \boxplus b \odot \bp$, but they are not equal in general.
We refer to~\cite[Theorem 16]{MM} and the surrounding discussion for further details.
\end{remark}

\section{Ordered hyperfields}\label{sec:ordered+hyperfields}
In this section, we introduce orderings on hyperfields and give examples of the various classes of ordered hyperfields we shall be interested in.

\subsection{Orderings on hyperfields}

We begin by introducing orderings on hyperfields.
\added{We use a definition from ordered field theory and real algebraic geometry rather than in terms of binary relations.
We note that this definition is not a novel concept, and has already been used and studied in a number of places within the hyperfield literature~\cite{Mars,LAJD,KLS}.
A comparison between this definition and the potential binary relation definition is given in Appendix~\ref{sec:order+relations}.}


\begin{definition}
An \emph{ordering} on a hyperfield $\HH$ is a subset $\HH^+ \subseteq \HH$ satisfying:
\begin{itemize}
    \item $\HH^+ \boxplus \HH^+ \subseteq \HH^+$,
    \item $\HH^+ \odot \HH^+ \subseteq \HH^+$,
    \item $\HH = \HH^+ \cup \{\0\} \cup \HH^-$ and $\HH^+ \cap \HH^- = \emptyset$, where $\HH^- = -\HH^+ = \{-a \mid a \in \HH^+\}$.
\end{itemize}
We let $\chi(\HH)$ denote the set of orderings on $\HH$.
We say that $\HH$ is \emph{real} if its set of orderings is non-empty, i.e. $\chi(\HH) \neq \emptyset$. 
\added{An \emph{ordered hyperfield} is a real hyperfield $\HH$ with a distinguished ordering $\HH^+$.}
\end{definition}

One can think of the subset $\HH^+$ as the `positive' elements of $\HH$.
Note that this choice in not unique in general.

\begin{example}\label{ex:field+ordering}
If $\HH = \RR$, then there is precisely one ordering, namely $\RR^+ = \RR_{> 0}$.
This also defines an order relation $\prec$ on $\RR$ given by $a \prec b$ if and only if $b-a \in \RR^+$.

More generally, if $\HH = \FF$ is a \emph{real} field, then its orderings $\FF^+$ are in bijection with order relations that are compatible with the field operations.
Explicitly, one can define an order relation given by $a \prec b$ if and only if $b - a \in \FF^+$: this is a strict total order that is compatible with the field operations:
\begin{align*}
    &\text{(Compatible with addition)} & a \prec b \, &\Rightarrow \, a + c \prec b + c \quad \forall c \in \FF \, , \\
    &\text{(Compatible with multiplication)} & a \succ 0 \, , \, b\succ 0 \, &\Rightarrow a\cdot b \succ 0 \, .
\end{align*}
Conversely, every ordering is defined as the set of elements $a \succ 0$ for some strict total order that is compatible with the field operations.
We show in Appendix~\ref{sec:order+relations} that defining order relations on hyperfields has many issues, as such we prefer to work with orderings. 
\end{example}

\begin{example}
The simplest example of a \emph{real} hyperfield is $\SS$, whose single ordering \emph{is} $\SS^+ = \{\1\}$. Note that $\SS^+$ is closed under addition and multiplication, and that $\SS$ decomposes into $\SS^+ \sqcup \SS^- \sqcup \{\0\}$.

As a non-example, consider $\SS^- = \{-\1\}$.
While closed under addition, it is not closed under multiplication and so does not form an ordering.
\end{example}

\begin{example}
The signed tropical hyperfield $\TT\RR$ has a single ordering, namely
\[
\TT\RR^+ = \left\{(1,a) \mid a \in \RR\right\} \, .
\]
\end{example}

\begin{example}
    None of the other hyperfields introduced so far have an ordering.
    As an example, suppose that the tropical hyperfield $\TT$ had some ordering $\TT^+$.
    For any $a \in \TT^+$, we would obtain $-\infty \in a \boxplus a$.
    This is a contradiction as $\TT^+$ must be closed under addition, but the zero element cannot be contained in $\TT^+$.
\end{example}

The next result describes how \emph{orderings} on a field $\FF$ are connected to \emph{orderings} on the quotient hyperfield $\FF/U$.

\begin{theorem}\label{quotientorder}\cite[Theorem 3.4]{KLS}
    Let $\HH = \FF/U$ be a quotient hyperfield \added{with corresponding quotient map $\tau\colon \FF \rightarrow \HH$}.
    Then $\FF^+$ is an ordering on $\FF$ with $U \subseteq \FF^+$ if and only if $\tau(\FF^+)$ is an ordering on $\HH$.
\end{theorem}

An immediate corollary of Theorem~\ref{quotientorder} is that an ordered quotient hyperfield can only arise as the quotient of an ordered field.
Moreover, the number of orderings on $\HH$ is equal to $|\chi(\FF|U)|$, the number of orderings on $\FF$ that contain $U$.
We can use this to count the number of orderings on quotient hyperfields, and construct hyperfields with a specific number of orderings.

\begin{example}\label{ex:uncountable}
The following is a quotient hyperfield with uncountably many orders, constructed as a quotient of a field with the same property.
Consider the field $\FF=\QQ(X)$, the field of rational functions over $\QQ$.
For any transcendental $\alpha \in \RR$, one can define an ordering $\FF^+_{\alpha} = \{f \, \mid f(\alpha) >0 \}$.
Furthermore, $\FF^+_{\alpha} = \FF^+_{\beta}$ if and only if $\alpha =\beta$.
This implies that $\chi(\QQ(X))$ is uncountable, as the transcendental numbers in $\RR$ are uncountable.

For $f \in \QQ(X)^\times$, let $\langle f \rangle = \{f^k \, \mid \, k \in \ZZ \}$ be the multiplicative subgroup generated by $f$.
If $f \in \FF^+_{\alpha}$, then $\langle f \rangle \subset\FF^+_{\alpha}$.
Consider the quotient hyperfield $\HH = \QQ(X)/ \langle f \rangle$.
As the set of transcendental $\alpha$ such that $f(\alpha) > 0$ is uncountable, it follows that $\chi(\QQ(X)\mid \langle f \rangle)$ is also uncountable, and that $\chi(\HH)$ is uncountable.
\end{example}

\begin{example}
\added{One should note that an ordering $\FF^+ \subseteq \FF$ will not give rise to an ordering $\HH^+ = \tau(\FF^+)$ on $\HH = \FF/U$ if $U \nsubseteq \FF^+$.}
As an example, consider $\FF = \RR$ and the multiplicative subgroup $U = \{1, -1\}$.
Observe that $\RR$ has a unique ordering $\RR_{>0}$, and $U$ is not contained in it.
The resulting quotient hyperfield $\HH = \RR/\{1, -1\}$ is not real; hyper-addition is given by $\bar{a} \boxplus \bar{b} = \{\overline{a+b}, \overline{a-b}\}$, and so each element is self-inverse.
This implies any possible ordering would satisfy $\HH^+ = \HH^-$, a contradiction.
\end{example}

We end this subsection by recalling the notion of an ordering preserving homomorphism between ordered hyperfields.

\begin{definition}
Let $\HH_1, \HH_2$ be ordered hyperfields.
A homomorphism $f\colon\HH_1 \rightarrow \HH_2$ \added{is} \emph{order preserving} if $f(\HH_1^+) \subseteq \HH_2^+$. 
\end{definition}

\begin{example}
Let $\HH = \FF/U$ be an ordered quotient hyperfield.
As a corollary of Theorem~\ref{quotientorder}, we observe that the quotient map $\tau\colon \FF \rightarrow \HH$ is an order preserving homomorphism.
\end{example}
\begin{example}
Extending Example~\ref{ex:sign+hom}, for any ordered hyperfield there exists an order preserving homomorphism to the sign hyperfield
\[
\sgn \colon \HH \rightarrow \SS \quad , \quad \sgn(a) = \begin{cases}
    \1 & a \in \HH^+, \\
    -\1 & a \in \HH^-, \\
    \0 & a = \0_\HH,
\end{cases} \, 
\]
\added{The following lemma shows that converse is also true.}
\end{example}

\added{
\begin{lemma}{\cite[Remark 3.2]{KLS}} \label{lem:ordered+hom}
    Let $\HH$ be a hyperfield.
    $\HH^+$ is an ordering on $\HH$ if and only if there exists a hyperfield homomorphism $f \colon \HH \rightarrow \SS$ with $\HH^+ = f^{-1}(\1)$.
\end{lemma}
}

\subsection{Classes of ordered hyperfields} \label{sec:class-ord-strngt}
In this subsection, we prove the inclusion of classes of ordered hyperfields given in Figure~\ref{fig:classes}.
Along the way, we will classify ordered stringent hyperfields, and give numerous examples of the pathological behaviour that can arise.

\begin{figure}
\[
\{\text{ord. fields}\} \, \subsetneq \, \{\text{stringent ord. hyperfields}\} \, \subsetneq \, \{\text{quotient ord. hyperfields}\} \, \subseteq \, \{\text{ord. hyperfields}\}
\]
\caption{The classes of ordered hyperfields and their inclusion relations}
\label{fig:classes}
\end{figure}
Bowler and Su~\cite{BS} present a complete classification of stringent (skew) hyperfields.
Their classification utilizes a construction in which one `extends' a hyperfield by a totally ordered abelian group.
This construction had already been utilised to study tropical extensions of semirings~\cite{AGG:09,AGG:14}, and more recently tropical extensions of idylls~\cite{TG:22}.
We recall this construction for hyperfields as follows.

Let $(\HH,\boxplus_\HH,\odot_\HH)$ be a hyperfield and $(G,+,0)$ a totally ordered abelian group.
We define $\HH \rtimes G$ to be the hyperfield with ground set $(\HH^\times \times G) \cup \{\0\}$, where $\0$ is some new element acting as the additive identity.
We define the multiplication by
\[
(a,g) \odot (b,h) = (a\odot_\HH b, g+ h) \quad , \quad (a,g) \odot \0 = \0 \odot (a,g) = \0 \, ,
\]
where $\1 := (\1_\HH, 0)$ is the multiplicative identity.
The addition is defined in a more involved way as follows:
\[
(a,g) \boxplus (b,h) = 
\begin{cases}
(a,g), & g > h \\
(b,h), & g < h \\
\{(c,g) \mid c \in (a \boxplus_\HH b) \cap \HH^\times\}, & g = h \, , \, \0_\HH \notin a \boxplus_\HH b  \\
\{(c,g) \mid c \in (a \boxplus_\HH b) \cap \HH^\times\} \cup \{(a, g') \mid a \in \HH^\times \, , \, g' < g\} \cup \{\0\}, & g = h \, , \, \0_\HH \in a \boxplus_\HH b 
\end{cases}
\]
One can check that $\HH \rtimes G$ is a hyperfield, and that it is stringent if and only if $\HH$ is stringent also.
Furthermore, the following classification theorem shows all stringent hyperfields can be written this way.

\begin{theorem}\label{thm: stringent+classification}\cite[Theorem 4.10]{BS}
Every stringent hyperfield has the form $\HH \rtimes G$, where $\HH$ is either the Krasner hyperfield $\KK$, the sign hyperfield $\SS$ or a field $\FF$.
\end{theorem}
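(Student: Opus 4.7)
The plan is to exhibit every stringent hyperfield $\tilde\HH$ canonically as a semi-direct product $\HH_0\rtimes G$, by extracting a value group and residue hyperfield from its internal structure, in a fashion parallel to classical valuation theory adapted to the multi-valued setting.

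First I would define a magnitude relation on $\tilde\HH^\times$ by declaring $x\prec y$ iff $x\neq y$ and $x\boxplus y=\{y\}$. Stringency guarantees $x\boxplus y$ is a singleton whenever $x\neq -y$, so this is well-posed. Associativity of $\boxplus$ (applied to $(x\boxplus y)\boxplus z$) together with commutativity yield that $\prec$ is a strict partial order, and distributivity gives $x\prec y\Rightarrow xz\prec yz$. Declaring $x\sim y$ iff $x\not\prec y$ and $y\not\prec x$ produces a relation whose transitivity is the main technical point here; I would establish it by a case analysis on whether $x=-y$ or $y=-z$, using associativity on $(x\boxplus y)\boxplus z$ to derive a contradiction from the hypothesis $x\sim y\sim z$ combined with $x\prec z$. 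The quotient $G:=\tilde\HH^\times/\sim$ then inherits an abelian group structure from $\odot$ and a total order from $\prec$, where totality is built into the definition of $\sim$.

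Next I would build the residue hyperfield $\HH_0$ on the set $\{x\in\tilde\HH^\times:x\sim\1\}\cup\{\0\}$, inheriting multiplication and defining addition by retaining only the components of $x\boxplus y$ which are $\sim$-equivalent to $\1$, adjoining $\0$ whenever $x\boxplus y$ contains any element of strictly smaller magnitude. One checks this makes $\HH_0$ into a stringent hyperfield in which every non-zero element is $\sim$-equivalent to $\1$, and classifying such ``trivially valued'' stringent hyperfields is the most delicate step. The decisive invariant is $T:=\1\boxplus(-\1)$: distributivity yields $aT=a\boxplus(-a)$ for each $a\in\HH_0^\times$, so $T\cap\HH_0^\times$ is stable under the regular multiplicative action of $\HH_0^\times$ on itself. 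Since that action is transitive, $T\cap\HH_0^\times$ is either $\emptyset$ or all of $\HH_0^\times$. The first case gives single-valued addition and identifies $\HH_0$ with an ordered field $\FF$; in the second, reversibility applied to $a\in\1\boxplus(-\1)$ with $a\neq\1$ combined with stringency of $a\boxplus(-\1)$ forces $a=-\1$, whence $\HH_0^\times\subseteq\{\1,-\1\}$ and a further application of reversibility gives $\1\boxplus\1=\{\1\}$. This pins down $\HH_0=\KK$ if $\1=-\1$ and $\HH_0=\SS$ otherwise.

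Finally I would pick a set-theoretic section $s\colon G\to\tilde\HH^\times$ of the quotient map and define $\Phi\colon\HH_0\rtimes G\to\tilde\HH$ by $\Phi(a,g)=a\odot s(g)$ and $\Phi(\0)=\0$. Verifying that $\Phi$ is a hyperfield isomorphism is then essentially bookkeeping: the case-by-case description of the hyperoperation on $\HH_0\rtimes G$ recorded in the excerpt matches, by construction, the behaviour of $\tilde\HH$ under the decomposition via $\prec$ and $\sim$. I expect the main obstacle throughout to be the residue classification; the hyperfield axioms are rigid enough to force exactly three possibilities, but extracting this rigidity requires carefully threading associativity, reversibility, and stringency together to rule out otherwise plausible hyperstructures on finite sets.
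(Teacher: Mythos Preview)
The paper does not supply a proof of this statement; it is quoted from Bowler--Su \cite[Theorem 4.10]{BS} and used as a black box. So there is no in-paper argument to compare against. Your outline does follow the same architecture as the Bowler--Su proof: extract a magnitude preorder from the hyperaddition, pass to a totally ordered abelian quotient $G$, build a residue hyperfield $\HH_0$ on the identity class, classify $\HH_0$, and reassemble.

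Two issues in the sketch are worth flagging. First, a minor slip: when $T\cap\HH_0^\times=\emptyset$ you conclude that $\HH_0$ is an \emph{ordered} field, but the theorem concerns arbitrary stringent hyperfields and the residue is merely a field. Relatedly, your stability claim for $T\cap\HH_0^\times$ under the regular action does not follow from $aT=a\boxplus(-a)$ alone; what one actually proves (via reversibility and the trivial-valuation hypothesis) is that $(a\boxplus -a)\cap\HH_0^\times\subseteq\{a,-a\}$, and then an associativity computation on $(a\boxplus\1)\boxplus(-\1)$ is needed to force $\HH_0^\times\subseteq\{\pm\1\}$ in the non-field case.

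Second, and more substantively: a set-theoretic section $s\colon G\to\tilde\HH^\times$ does not make $\Phi(a,g)=a\odot s(g)$ a hyperfield homomorphism. Multiplicativity of $\Phi$ requires $s(g)\odot s(h)=s(g+h)$, i.e.\ that the short exact sequence $1\to\HH_0^\times\to\tilde\HH^\times\to G\to 1$ of abelian groups splits. This is automatic when $\HH_0=\KK$ (trivial kernel) but is a genuine step when $\HH_0=\SS$ or $\HH_0$ is a field; without it one obtains only a product twisted by a $2$-cocycle valued in $\HH_0^\times$, not the untwisted $\HH_0\rtimes G$ defined in the paper. Bowler--Su address this, and your sketch should not dismiss the reassembly as ``essentially bookkeeping''.
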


We use this classification of stringent hyperfields to completely classify ordered stringent hyperfields.
This classification will be useful for a number of technical lemmas throughout, where properties of stringent hyperfields are proved more directly via case analysis.

\begin{proposition}\label{prop:ord+stringent+class}
Every ordered stringent hyperfield has the form $\HH \rtimes G$, where $\HH$ is either the sign hyperfield $\SS$ or an ordered field $\FF$.
\end{proposition}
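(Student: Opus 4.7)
The plan is to combine the Bowler--Su classification (Theorem~\ref{thm: stringent+classification}) with an elimination argument: the only obstruction lies in ruling out the Krasner case and imposing that $\FF$ itself be orderable. By Theorem~\ref{thm: stringent+classification}, any stringent hyperfield has the form $\HH \rtimes G$ where $\HH \in \{\KK, \SS, \FF\}$ for some field $\FF$, so the task reduces to showing that (i) $\KK \rtimes G$ admits no ordering, and (ii) if $\FF \rtimes G$ is ordered, then $\FF$ itself is ordered.

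For (i), I would exploit the fact that in $\KK$ the element $\1$ is self-inverse, since $\0_\KK \in \1 \boxplus_\KK \1 = \{\0, \1\}$. Applying the addition rule for $\KK \rtimes G$ with $g = h$, this forces $\0 \in (\1, g) \boxplus (\1, g)$, so by uniqueness of additive inverses $(\1, g) = -(\1, g)$ for every $g \in G$. Consequently every nonzero element of $\KK \rtimes G$ is self-inverse, which is incompatible with the decomposition and disjointness axioms $\HH = \HH^+ \sqcup \{\0\} \sqcup \HH^-$ and $\HH^+ \cap \HH^- = \emptyset$ required of an ordering.

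For (ii), given an ordering $(\FF \rtimes G)^+$, I would define
\[
\FF^+ := \{ a \in \FF^\times \mid (a, 0) \in (\FF \rtimes G)^+ \}
\]
and verify the three ordering axioms on $\FF$. Multiplicative closure is immediate from $(a,0) \odot (b,0) = (ab, 0)$, and the decomposition $\FF = \FF^+ \sqcup \{0\} \sqcup \FF^-$ follows by restricting the corresponding decomposition of $\FF \rtimes G$ to the subset $\{(a,0) \mid a \in \FF^\times\} \cup \{\0\}$. The only subtle step is additive closure: for $a, b \in \FF^+$, if $a + b \neq 0$ then $(a, 0) \boxplus (b, 0) = \{(a + b, 0)\} \subseteq (\FF \rtimes G)^+$, forcing $a + b \in \FF^+$; while $a + b = 0$ is impossible, since it would give $(a, 0)$ and $(-a, 0) = -(a, 0)$ both in $(\FF \rtimes G)^+$, violating disjointness.

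The main obstacle is really just bookkeeping in the field case: one must carefully check that the copy of $\FF$ sitting inside $\FF \rtimes G$ as $\{(a, 0)\} \cup \{\0\}$ inherits single-valued field addition (with the exception of additive inverses summing to $\0$) so that the naive restriction of the ordering works. No subtler ideas are required, since the Bowler--Su structure theorem does all of the heavy lifting.
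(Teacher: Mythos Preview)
Your proof is correct and shares the same core idea as the paper's: use the Bowler--Su classification, then restrict an ordering on $\HH \rtimes G$ to the copy of $\HH$ sitting at $g = 0$. The difference is one of packaging. The paper handles both obstructions (i) and (ii) in a single stroke by invoking the characterisation of orderings as hyperfield homomorphisms to $\SS$: given an ordering $\varphi\colon \HH \rtimes G \to \SS$, composing with the inclusion $i\colon \HH \hookrightarrow \HH \rtimes G$, $a \mapsto (a,0)$, yields a homomorphism $\HH \to \SS$, hence an ordering on $\HH$. Since $\KK$ admits no such homomorphism and a field $\FF$ admitting one is exactly an ordered field, both cases fall out at once. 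Your argument instead verifies the ordering axioms on $\FF$ by hand and rules out $\KK \rtimes G$ directly via the self-inverse observation. This is slightly more elementary (it avoids the $\SS$-homomorphism characterisation) at the cost of a small case split; the paper's version is terser but relies on that extra piece of machinery. Neither approach has any real advantage over the other.
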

\begin{proof}
We show that $\HH \rtimes G$ has an ordering if and only if $\HH$ has an ordering.
As the sign hyperfield and ordered fields are the only possibilities for $\HH$ that are ordered, this completes the proof.

It is easy to check that if $\HH^+$ is an ordering of $\HH$, then $\{(a, g) \mid a \in \HH^+ \, , \, g \in G \}$ satisfies the properties of an ordering on $\HH \rtimes G$.
Conversely, suppose $\HH \rtimes G$ has an ordering: \added{by Lemma~\ref{lem:ordered+hom} we consider this as a hyperfield homomorphism $\varphi \colon \HH \rtimes G \rightarrow \SS$.}
There is an injective homomorphism $i \colon \HH \rightarrow \HH \rtimes G$ that sends $\0_\HH \mapsto \0$ and all nonzero elements $a \mapsto (a,0)$.
Therefore the composition $\varphi \circ i$ is a hyperfield homomorphism from $\HH$ to $\SS$ and so $\HH$ is also ordered.
\end{proof}

\begin{example}
The signed tropical hyperfield $\TT\RR$ is ordered and stringent, and so is covered by this classification.
Explicitly it has the form $\SS \rtimes \RR$ where $(\RR,+)$ is viewed as a ordered additive abelian group.

More generally, we can view any hyperfield of the form $\SS \rtimes G$ as a signed tropical hyperfield of higher rank.
By Hahn's embedding theorem, any ordered abelian group $G$ can be embedded inside $(\RR^k,+)$ ordered via the lexicographical ordering, where the smallest such $k$ is the \emph{rank} of $G$.
Consequently, we can embed $\SS \rtimes G$ inside the \emph{rank k} signed tropical hyperfield $\TT\RR^{k} = \SS \rtimes \RR^k$.
Tropical geometry and convexity of higher rank has been of recent interest~\cite{Foster+Ranganathan:16,Joswig+Smith:18,AminiIriarte:2022}, and so this is a natural hyperfield to study.
\end{example}


It was also shown in~\cite{BS} that every (skew) stringent hyperfield is the quotient of a (skew) field.
We recall their result specialising to ordered stringent hyperfields; note that removing the skew condition simplifies the construction considerably.

Let $\FF[[t^G]]$ be the field of \emph{Hahn series} with value group $G$ and coefficients in some arbitrary field $\FF$.
This is the field of formal power series 
\[
\gamma = \sum_{g \in G} c_gt^g \, , \, c_g \in \FF
\]
whose support $\supp(\gamma) = \{g \in G \mid c_g \neq 0\}$ is well-ordered and has a well-defined maximum.
The \emph{leading term} $\lt(\gamma)$ of $\gamma$ is the term with largest exponent,
\[
\lt(\gamma) = \left\{c_gt^g \mid g \geq g' \, \forall g' \in \supp(\gamma) \right\} \, .
\]
The \emph{leading coefficient} $\lc(\gamma)$ is the coefficient of the leading term, and the \emph{leading power} $\lexp(\gamma)$ is the exponent of the leading term.
Addition and multiplication are the usual operations on power series:
\begin{align*}
\left(\sum_{g \in G} c_gt^g\right) + \left(\sum_{g \in G} d_gt^g\right) &= \sum_{g \in G} (c_g+d_g)t^g \\
\left(\sum_{g \in G} c_gt^g\right) \cdot \left(\sum_{g \in G} d_gt^g\right) &= \sum_{g \in G} \left(\sum_{\substack {h,h' \in G \\ h +_{G} h' = g}} c_h\cdot d_{h'}\right)t^g \\
\end{align*}
If $\FF$ is an ordered field, we can define an ordering on $\FF[[t^G]]$ given by power series whose leading coefficient is positive, i.e.
\[
\FF[[t^G]]^+ := \{\gamma \mid \lc(\gamma) \in \FF^+\} \, .
\]

\begin{theorem}\cite[Theorem 7.5]{BS}\label{thm:stringent+quotient}
Every stringent hyperfield can be realised as a quotient of a field. Explicitly,
\begin{align*}
\KK \rtimes G &\simeq \FF[[t^G]]/U & U &= \left\{\gamma \mid \lt(\gamma) = c_0 \in \FF \right\} \\
\SS \rtimes G &\simeq \FF[[t^G]]/V  & V &= \left\{\gamma \mid \lt(\gamma) = c_0 \in \FF^+ \right\} \, , \\
\FF \rtimes G &\simeq \FF[[t^G]]/W & W &= \left\{\gamma \mid \lt(\gamma) = c_0 = 1 \right\}  \, .
\end{align*}
\end{theorem}

\begin{example} \label{ex:tropical+quotients}
The tropical hyperfield $\TT = \KK\rtimes \RR$ can be realised as a quotient of the field $\FF[[t^{\RR}]]$, also known as the \emph{generalised Puiseux series}~\cite{Markwig:07}.
Usually in tropical geometry, one considers the image of objects in the valuation map
\begin{equation}\label{eq:val}
\val\colon \FF[[t^{\RR}]] \rightarrow \TT \quad , \quad \gamma \mapsto \lexp(\gamma) \, .
\end{equation}
We can connect this perspective to Theorem~\ref{thm:stringent+quotient} by observing that $U = \val^{-1}(\1_\TT) = \val^{-1}(0)$.

The signed tropical hyperfield $\TT\RR = \SS\rtimes \RR$ can be realised as a quotient of the field $\FF[[t^{\RR}]]$ where $\FF$ is an ordered field.
The valuation map~\eqref{eq:val} can be enriched to the \emph{signed valuation}, which records the sign of a Hahn series:
\begin{equation}\label{eq:sval}
\sval\colon \FF[[t^{\RR}]] \rightarrow \TT\RR \quad , \quad \gamma \mapsto (\sgn(\lc(\gamma)), \lexp(\gamma)) \, .
\end{equation}
Studying objects in the image of this map is referred to as \emph{real tropicalization}~\cite{JSY}.
Again, we can connect this perspective to Theorem~\ref{thm:stringent+quotient} by observing that $V = \sval^{-1}(\1_{\TT\RR}) = \sval^{-1}((1,0))$.

Extrapolating the previous two examples suggests the introduction of the \emph{fine valuation}, a further enrichment of the valuation map that recalls the whole leading term:
\begin{equation}\label{eq:fval}
\fval\colon \FF[[t^{\RR}]] \rightarrow \FF\rtimes\RR \quad , \quad \gamma \mapsto (\lc(\gamma), \lexp(\gamma)) \, .
\end{equation}
This map is defined so that $W = \fval^{-1}(\1_{\FF\rtimes\RR}) = \fval^{-1}((1,0))$.
To our knowledge, this fine valuation has not been studied within tropical geometry.
\end{example}

\begin{example}
Consider the hyperfield $\RR \rtimes \ZZ$, a schematic viewpoint is given in Figure~\ref{fig:RxZ}.
Displayed are the elements
\[
a = (1,1) \, , \, b = (3,0) \, , \, c = (-1,0) \, .
\]
We see that $a \boxplus b = a \boxplus c = a$, as $a$ has a strictly larger value in the group $G = \ZZ$.
However, both $b$ and $c$ have the same value in $\ZZ$, and so their sum is $b \boxplus c = (3 + (-1), 0) = (2,0)$.
If we consider a point $d = (k,0)$ and its additive inverse $-d = (-k,0)$, their sum is the set
\[
d \boxplus -d = \SetOf{(\ell, g)}{g \in \ZZ_{<0}} \, .
\]

As discussed in Example~\ref{ex:tropical+quotients}, we can view $\RR \rtimes \ZZ$ as $\RR[[t^\ZZ]]$ in which we only remember the leading term.
When we take the sum of two series $\gamma, \gamma'$, the leading term $\lt(\gamma + \gamma')$ is completely determined by $\lt(\gamma)$ and $\lt(\gamma')$ if and only if $\lt(\gamma') \neq -\lt(\gamma)$.
If the leading terms cancel, $\lt(\gamma + \gamma')$ could be any term whose exponent is smaller. 
As the hyperfield does not see this information about $\gamma$ and $\gamma'$, the output is the set of all possibilities.
The same viewpoint can be taken for $\TT$ and $\TT\RR$: in $\TT$ the hyperfield only sees the leading power of $\gamma$, while in $\TT\RR$ the hyperfield only sees the sign and the leading power of $\gamma$.
\end{example}

\begin{figure}
    \centering
    \includegraphics{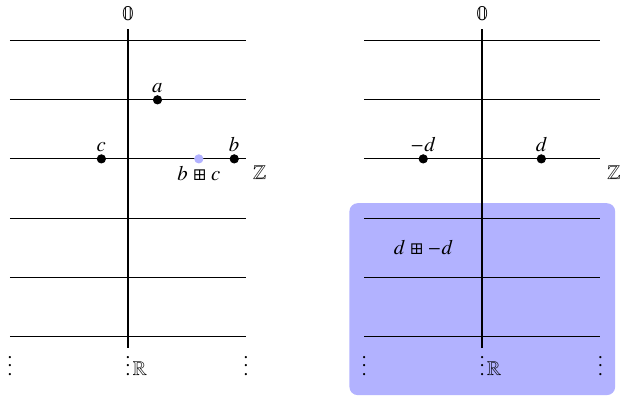}
    \caption{A schematic representation of $\RR \rtimes \ZZ$, with examples of the hyper-addition.}
    \label{fig:RxZ}
\end{figure}

Theorem~\ref{thm: stringent+classification} gives examples of ordered stringent hyperfields that are not ordered fields.
Moreover, we have seen examples of ordered quotient hyperfields that do not arise as ordered stringent hyperfields: see Example~\ref{ex:uncountable} and the upcoming Example~\ref{ex:finite+ordered}.
This gives the inclusion of classes in Figure~\ref{fig:classes}.
However, not all of these inclusions are strict: we do not know of any ordered hyperfields that cannot arise as ordered quotient hyperfields.
There does exist examples of hyperfields that are not quotient hyperfields (see~\cite{MAS,Baker+Jin:21}), but to our knowledge none of them are real.

\begin{question}
    Does there exist an ordered hyperfield that cannot be realised as the quotient of an ordered field?
\end{question}

We end this section by noting that certain standard behaviour of ordered fields need not hold for ordered hyperfields.
An ordered field must be infinite, however the sign hyperfield proves that this need not be the case for ordered hyperfields.
The following example shows that this is not the only one, by constructing an infinite family of ordered hyperfields of finite size.

\begin{example} \label{ex:finite+ordered}
    For proofs of the claimed properties in the following, see~\cite[Prop 8.3.9]{Lam:73}.
    Consider the field of real formal Laurent series $\RR((t))$ and its set of square elements $S$
    \[
    \RR((t)) := \SetOf{\sum_{i = m}^\infty a_i t^i}{m \in \ZZ \, , \, a_i \in \RR} \quad , \quad S = \SetOf{\sum_{i = m}^\infty a_i t^i}{m \in 2\ZZ\, , \, a_m >0} \, .
    \]
    The squares $S$ are closed under addition (as $\RR((t))$ is a \emph{Pythagorean field}) and forms a multiplicative subgroup of $\RR((t))^\times$.
    Moreover, $S$ forms an index four subgroup of $\RR((t))^\times$, as we can decompose $\RR((t))$ into the cosets
    \[
    \RR((t)) = S \sqcup t\cdot S \sqcup -S \sqcup -t\cdot S \sqcup 0\cdot S \, .
    \]
    Using these facts, we can deduce that the quotient hyperfield $\HH = \RR((t))/S = \{\0, \1, -\1, \bar{t}, -\bar{t}\}$ has additive structure:
\begin{equation} \label{eq:addition+table}
\begin{tabular}{|c|c|c|c|c|c|}
 \hline
  $\boxplus$ & $\0$ & $\1$ & $\bar{t}$ & $-\bar{t}$ & $-\1$\\
  \hline
  $\0$ & $\{\0\}$ & $\{\1\}$ & $\{\bar{t}\}$ & $\{-\bar{t}\}$ & $\{-\1\}$\\
  \hline
  $\1$ & $\{\1\}$ & $\{\1\}$ & $\{\1,\bar{t}\}$ & $\{\1,-\bar{t}\}$ & $\HH$\\
  \hline
  $\bar{t}$ & $\{\bar{t}\}$  & $\{\1,\bar{t}\}$ & $\{\bar{t}\}$ & $\HH$ & $\{\bar{t},-\1\}$\\
  \hline
  $-\bar{t}$ & $\{-\bar{t}\}$ & $\{\1,-\bar{t}\}$ & $\HH$ & $\{-\bar{t}\}$ & $\{-\bar{t},-\1\}$\\
  \hline
  $-\1$ & $\{-\1\}$ & $\HH$ & $\{\bar{t},-\1\}$ & $\{-\bar{t},-\1\}$ & $\{-\1\}$\\
  \hline
\end{tabular}
\end{equation}
and multiplicative structure $\HH^\times \cong C_2 \times C_2$ generated by $\bar{t}$ and $-\1$.

    As $\RR$ is a real field with a unique ordering, $\RR((t))$ is a real field with exactly two orderings: one where $t > 0$ and one where $t < 0$.
    These orderings are inherited by $\HH$ in the following way:
    \[
    \RR((t))^+ = \begin{cases}
        S \sqcup t\cdot S & t > 0 \\
        S \sqcup -t\cdot S & t < 0 \\
    \end{cases}
    \quad \Rightarrow \quad 
    \HH^+ = \begin{cases}
    \{\1, \bar{t}\}  & t > 0\\
    \{\1, -\bar{t}\} & t < 0\\
    \end{cases}
    \]
    This shows that $\chi(\HH) = \chi(\RR((t))) = 2$.
    Moreover, one can check from the classification in~\cite{ZLfin} that $\HH$ is the only real hyperfield of order five (it is the hyperfield $\SS^{\Uparrow 5}$ in their notation).

    This construction can be generalised to $\FF_n := \RR((t_1))\dots((t_n))$.
    Let $S_n$ be the set of squares in $\FF_n$; as above $S_n$ is a multiplicative subgroup of index $2^{n+1}$.
    This gives a quotient hyperfield 
    \[
    \HH_n = \FF_n/S_n = \left\{\pm\bar{t}_1^{k_1}\cdots\bar{t}_n^{k_n} \mid k_i \in \{0,1\}\right\} \cup \{\0\}
    \]
    of order $2^{n+1} + 1$.
    Moreover, these hyperfields are real and have exactly $2^n$ orders, as $\FF_n$ has twice as many orders as $\FF_{n-1}$ determined by whether $t_n > 0$ or $t_n < 0$.
    This gives an infinite family of ordered hyperfields of finite order.
\end{example}

\section{Convex and conic sets over hyperfields} \label{sec:convex+sets}

In this section, we introduce conic and convex sets over ordered hyperfields.
Throughout, we shall assume all of our hyperfields are ordered.

\subsection{Basics of convex and conic sets}

\begin{definition}
A subset $S \subseteq \HH^d$ is \emph{conic} if it satisfies
\begin{equation} \label{eq:conic}
a \odot \bp \boxplus b \odot \bq \subseteq S \, ,
\end{equation}
for all $\bp, \bq \in S$ and $a,b \in \HH^+$.
Given a set $T \subseteq \HH^d$, its \emph{conic hull} $\cone(T)$ is the (unique) minimal conic set containing $T$.
\end{definition}
One can also define cones to be closed under non-negative scalars $\HH^+ \cup\{\0\}$ rather than just positive scalars.
This will ensure the vector $\underline{\0}$ is always contained in a conic set.
It will be convenient for us later to not enforce this, as we will have conic sets that we may not want to contain $\underline{\0}$.

Given a finite multiset $\{\bp_1, \dots, \bp_k\}\subseteq \HH^d$ of 
points, a \emph{conic combination} is an expression of the form
\[
\bigboxplus_{i=1}^k a_i \odot \bp_i \subseteq \HH^d \quad , \quad a_i \in \HH^+ \, .
\]
Note that unlike over fields, we must allow for repetition in the set of points we are taking a combination of, as $a_1 \odot \bp \boxplus a_2 \odot \bp = (a_1 \boxplus a_2) \odot \bp$ where $a_1 \boxplus a_2$ may be a subset of $\HH^+$ rather than an element.
If $\HH$ is stringent, then this is not an issue as $a_1 \boxplus a_2$ is always a singleton.

Given a set $T \subseteq \HH^d$, the following lemma shows a more concrete characterisation of $\cone(T)$.
The proof is a simpler version of the proof of Lemma~\ref{lem:conv+comb}, and so we defer it until then.

\begin{lemma}\label{lem:conic+comb}
The conic hull of $T$ is equal to the set of all finite conic combinations of points of $T$, i.e.
\[
\cone(T) = \SetOf{\bq \in \bigboxplus_{i=1}^k a_i \odot \bp_k }{ a_i \in \HH^+ \, , \, \{\bp_1, \dots, \bp_k\} \text{ finite multiset of } T }
\]
\end{lemma}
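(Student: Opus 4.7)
The plan is to prove the two set-inclusions separately, using the minimality of $\cone(T)$ in one direction and induction on the number of summands $k$ in the other. Write $C$ for the right-hand side, the set of points appearing in some finite positive hypersum $\bigboxplus_{i=1}^k a_i \odot \bp_i$ with $a_i \in \HH^+$ and $\bp_i \in T$.

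For $\cone(T) \subseteq C$, I would verify directly that $C$ is itself a conic set containing $T$, after which minimality forces the inclusion. Containment of $T$ is immediate: each $\bp \in T$ is the one-term combination $\1 \odot \bp$. For closure under the conic operation, suppose $\bq \in \bigboxplus_{i=1}^k a_i \odot \bp_i$ and $\bq' \in \bigboxplus_{j=1}^\ell b_j \odot \bp_j'$. For any $a, b \in \HH^+$, distributivity of $\odot$ over $\boxplus$ at the level of tuples yields
\[
a \odot \bq \boxplus b \odot \bq' \;\subseteq\; \bigboxplus_{i=1}^k (a \odot a_i) \odot \bp_i \;\boxplus\; \bigboxplus_{j=1}^\ell (b \odot b_j) \odot \bp_j',
\]
which is a $(k+\ell)$-term conic combination because $\HH^+ \odot \HH^+ \subseteq \HH^+$ by the ordering axioms, so every element of the left-hand side lies in $C$.

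For $C \subseteq \cone(T)$, I would induct on $k$. In the inductive step, associativity of $\boxplus$ lets me write any $\bq \in \bigboxplus_{i=1}^k a_i \odot \bp_i$ as an element of $a_1 \odot \bp_1 \boxplus \br$ for some $\br \in \bigboxplus_{i=2}^k a_i \odot \bp_i$; the hypothesis places $\br$ in $\cone(T)$, and applying the conic axiom to $\bp_1, \br \in \cone(T)$ with scalars $a_1, \1 \in \HH^+$ then puts $\bq$ in $\cone(T)$. The base case $k = 1$ reduces to showing $a \odot \bp \in \cone(T)$ for each $\bp \in T$ and $a \in \HH^+$, that is, closure of $\cone(T)$ under positive scalar multiplication.

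The main obstacle, and plausibly the reason this proof is deferred, is this $k = 1$ base case. The two-term conic axiom applied to $\bp, \bp$ only delivers $(a \boxplus b) \odot \bp \subseteq \cone(T)$, and a prescribed $a \in \HH^+$ need not itself appear in any such sum $a' \boxplus b'$ with $a', b' \in \HH^+$. Over an ordered field this is handled by $a = (a/2) + (a/2)$; over an ordered stringent hyperfield $\SS \rtimes G$ or $\FF \rtimes G$ of nontrivial rank one instead chooses a positive $b$ of strictly smaller $G$-value so that $a \in a \boxplus b$. In general I would either fold scalar closure into the working notion of conic set or dispatch this step case-by-case using the ordering. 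This subtlety is cleanly avoided in the companion Lemma~\ref{lem:conv+comb}, where the normalization condition on convex coefficients makes the one-term case trivial, which is why specializing that proof here is natural.
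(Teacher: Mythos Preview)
Your two directions are exactly the paper's: it defers this proof to the convex Lemma~\ref{lem:conv+comb}, which proceeds identically with base case $k=2$. The obstacle you flag at $k=1$ is not real, though, and neither case analysis nor a changed definition is needed: for \emph{any} ordered hyperfield one has $\HH^+ \boxplus \HH^+ = \HH^+$. Indeed, pick any $c \in \1 \boxplus \1 \subseteq \HH^+$; then $c^{-1} \in \HH^+$ and $\1 = c \odot c^{-1} \in (\1 \boxplus \1)\odot c^{-1} = c^{-1} \boxplus c^{-1}$, so scaling by an arbitrary $a \in \HH^+$ yields $a \in (a\odot c^{-1}) \boxplus (a\odot c^{-1})$. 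Hence $a\odot\bp \in (a\odot c^{-1})\odot\bp \boxplus (a\odot c^{-1})\odot\bp \subseteq \cone(T)$ by the two-term conic axiom applied to $\bp,\bp \in T$, and your induction can start at $k=2$ just as in the convex proof.
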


\begin{definition}
A subset $S \subseteq \HH^d$ is \emph{convex} if it satisfies
\begin{equation} \label{eq:convex}
a \odot \bp \boxplus b \odot \bq \subseteq S \, ,
\end{equation}
for all $\bp, \bq \in S$ and $a,b \in \HH^+$ such that $\1 \in a \boxplus b$.
Given a set $T \subseteq \HH^d$, the \emph{convex hull} $\conv(T)$ of $T$ is the (unique) minimal convex set containing $T$.
\end{definition}
Given a finite multiset $\{\bp_1, \dots, \bp_k\}\subseteq \HH^d$ of (not necessarily distinct) points, a \emph{convex combination} is an expression of the form
\[
\bigboxplus_{i=1}^k a_i \odot \bp_i \subseteq \HH^d \quad , \quad a_i \in \HH^+ \, , \, \1 \in \bigboxplus_{i=1}^k a_i \, .
\]

\begin{lemma}\label{lem:conv+comb}
The convex hull of $T$ is equal to the set of all finite convex combinations of points of $T$, i.e.
\[
\conv(T) = \SetOf{\bq \in \bigboxplus_{i=1}^k a_i \odot \bp_k }{ a_i \in \HH^+ \, , \, \1 \in \bigboxplus_{i=1}^k a_i \, , \, \{\bp_1, \dots, \bp_k\} \text{ finite multiset of } T }
\]
\end{lemma}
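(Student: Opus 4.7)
The plan is to prove both containments separately, writing $C(T)$ for the set of finite convex combinations of points of $T$.

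For $\conv(T) \subseteq C(T)$, I will verify that $C(T)$ is itself a convex set containing $T$ and then invoke minimality. The containment $T \subseteq C(T)$ is clear via the trivial combination $\bp = \1 \odot \bp$. For convexity, take $\bq_1 \in \bigboxplus_i a_i \odot \bp_i$ and $\bq_2 \in \bigboxplus_j b_j \odot \br_j$ with $\1 \in \bigboxplus_i a_i$ and $\1 \in \bigboxplus_j b_j$, and let $a, b \in \HH^+$ with $\1 \in a \boxplus b$. Distributivity of $\odot$ over $\boxplus$ gives
\[
a \odot \bq_1 \boxplus b \odot \bq_2 \subseteq \bigboxplus_i (a \odot a_i) \odot \bp_i \boxplus \bigboxplus_j (b \odot b_j) \odot \br_j,
\]
whose coefficients lie in $\HH^+$ by closure of $\HH^+$ under $\odot$. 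The sum of these coefficients equals $a \odot \bigboxplus_i a_i \boxplus b \odot \bigboxplus_j b_j$, which contains $a \boxplus b \ni \1$ since $a \in a \odot \bigboxplus_i a_i$ and $b \in b \odot \bigboxplus_j b_j$. Hence every point of $a \odot \bq_1 \boxplus b \odot \bq_2$ is a convex combination of points of $T$.

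For $C(T) \subseteq \conv(T)$, I induct on the number $k$ of terms. The base $k = 1$ forces $a_1 = \1$, so the combination is a single point of $T \subseteq \conv(T)$. For the inductive step, let $\bq \in \bigboxplus_{i=1}^k a_i \odot \bp_i$ with $\1 \in \bigboxplus_i a_i$. Unpacking the iterated hyperoperation, there exists $c \in \bigboxplus_{i=2}^k a_i$ with $\1 \in a_1 \boxplus c$. Crucially $c \in \HH^+$: the sum $\bigboxplus_{i=2}^k a_i$ is contained in $\HH^+$ by closure under $\boxplus$ combined with $\0 \notin \HH^+$, so $c$ is in particular invertible. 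Setting $b_i := c^{-1} \odot a_i$ for $i \geq 2$ yields coefficients in $\HH^+$ with $\1 = c^{-1} \odot c \in \bigboxplus_{i=2}^k b_i$. Distributivity then rewrites
\[
\bigboxplus_{i=1}^k a_i \odot \bp_i = a_1 \odot \bp_1 \boxplus c \odot \left( \bigboxplus_{i=2}^k b_i \odot \bp_i \right),
\]
so $\bq \in a_1 \odot \bp_1 \boxplus c \odot \bp'$ for some $\bp'$ that is a $(k-1)$-term convex combination of points of $T$. By induction $\bp' \in \conv(T)$, and convexity of $\conv(T)$ applied to the pair $(a_1, c)$ with $\1 \in a_1 \boxplus c$ gives $\bq \in \conv(T)$.

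The main obstacle I expect is careful bookkeeping in the inductive step: the iterated hyperoperation $\bigboxplus_{i=1}^k a_i \odot \bp_i$ conceals an existential over intermediate sums, so one must extract a specific coefficient $c \in \bigboxplus_{i=2}^k a_i$ with $\1 \in a_1 \boxplus c$, verify its positivity (hence invertibility via the ordering axioms), and then rescale to produce a valid $(k-1)$-term convex combination. Once this unpacking is done, the step reduces directly to the two-term definition of convexity.
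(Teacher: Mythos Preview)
Your proof is correct and follows essentially the same approach as the paper: show that the set of finite convex combinations is convex (giving $\conv(T) \subseteq C(T)$ by minimality), and show by induction on $k$ that every $k$-term convex combination lies in $\conv(T)$ by splitting off one term, rescaling the remaining coefficients by the inverse of their sum, and applying the two-term convexity axiom. The only cosmetic differences are that the paper takes $k=2$ as its base case and splits off the last term rather than the first.
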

\begin{proof}
We begin by showing that $\conv(T)$ must contain all finite convex combinations of elements of $T$.
Let $\{\bp_1, \dots, \bp_k\} \subseteq T$ be a finite set of points, possibly with repetition.
We claim via induction on $k$ that $\bigboxplus_{i=1}^k a_i \odot \bp_i \subseteq \conv(T)$, where $a_i \in \HH^+$ and $\1 \in \bigboxplus_{i=1}^k a_i$.
For the base case $k=2$, this holds by the definition of convex sets.

Assume that the claim holds for $k-1$.
As
\[
\1 \in a_1 \boxplus \cdots \boxplus a_k = \bigcup_{a \in a_1 \boxplus \cdots \boxplus a_{k-1}} a \boxplus a_k \, ,
\]
there exists some $b \in a_1 \boxplus \cdots \boxplus a_{k-1}$ such that $\1 \in b \boxplus a_k$.
Furthermore, we note that $b\in \HH^+$ as $\HH^+$ is closed under hyper-addition and multiplication, and so $b^{-1} \in \HH^+$ exists.
Then
\begin{align} \label{eq:conv+ind+step}
    \bigboxplus_{i=1}^k a_i \odot \bp_i = b \odot \left(\bigboxplus_{i=1}^{k-1} a_i \odot b^{-1} \odot \bp_i\right) \boxplus (a_k \odot \bp_k) \, ,
\end{align}
where $a_i \odot b^{-1} \in \HH^+$ for all $i$.
Furthermore
\[
\1 = b \odot b^{-1} \in \left(\bigboxplus_{i=1}^{k-1} a_i\right) \odot b^{-1} = \bigboxplus_{i=1}^{k-1} a_i b^{-1} \, ,
\]
and so by the induction hypothesis, we have $\bigboxplus_{i=1}^{k-1} a_i \odot b^{-1} \odot \bp_i \subseteq \conv(T)$.
By definition of convex sets, equation~\eqref{eq:conv+ind+step} must also be in $\conv(T)$.

Conversely, we show that the set of finite convex combinations of $T$ forms a convex set.
Let $\bp' \in \bigboxplus_{i=1}^k a_i \odot \bp_i$ and $\bq' \in \bigboxplus_{j=1}^\ell b_j \odot \bq_j$ be finite convex combinations of points $\bp_i, \bq_j \in T$.
Then for any $\alpha, \beta \in \HH^+$, we have
\begin{align*}
    \alpha \odot \bp' \boxplus \beta \odot \bq' &\in \alpha \odot \left(\bigboxplus_{i=1}^k a_i \odot \bp_i\right) \boxplus \beta \odot \left(\bigboxplus_{j=1}^\ell b_i \odot \bq_j\right) \\
    &= \left(\bigboxplus_{i=1}^k \alpha \odot a_i \odot \bp_i\right) \boxplus \left(\bigboxplus_{j=1}^\ell\beta \odot b_i \odot \bq_j\right) \, .
\end{align*}
Note that as $\HH^+$ is closed under multiplication, each $\alpha \odot a_i, \beta \odot b_j \in \HH^+$.
Moreover, if $\1 \in \alpha \boxplus \beta$, we also have
\begin{align}
    \1 \in \alpha \boxplus \beta \subseteq \alpha \odot \left(\bigboxplus_{i=1}^k a_i\right) \boxplus \beta \odot \left(\bigboxplus_{j=1}^\ell b_i\right) = \left(\bigboxplus_{i=1}^k \alpha \odot a_i\right) \boxplus \left(\bigboxplus_{j=1}^\ell  \beta \odot b_i\right) \, .
\end{align}
Therefore, the set of finite convex combinations of elements of $T$ is itself a convex set.
By minimality of $\conv(T)$, this completes the proof.
\end{proof}

Classic convex and conic sets over fields are intimately related, as each convex set in $d$-dimensional space can be described by a conic set in $(d+1)$-dimensional space.
The following lemma shows the same correspondence holds over hyperfields.
Given a set $T \subseteq \HH^d$, we define its \emph{homogenisation} as
\[
\widetilde{T} = \SetOf{(\bp, \1)\in \HH^{d+1}}{\bp \in T} \, .
\]
\begin{lemma}\label{lem:homog}
    Let $T \subseteq \HH^d$.
    The point $\bq \in \HH^d$ is contained in $\conv(T)$ if and only if the point $(\bq,\1) \in \HH^{d+1}$ is contained in $\cone(\widetilde{T})$.
\end{lemma}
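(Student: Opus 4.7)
The plan is to prove both directions by translating between convex combinations in $\HH^d$ and conic combinations in $\HH^{d+1}$, using the characterisations of $\conv(T)$ and $\cone(\widetilde{T})$ as finite combinations given by Lemmas~\ref{lem:conv+comb} and~\ref{lem:conic+comb}. The key observation driving both directions is that for any scalars $a_1,\dots,a_k \in \HH^+$ and points $\bp_1,\dots,\bp_k \in T$, the sum in $\HH^{d+1}$ decomposes coordinate-wise as
\[
\bigboxplus_{i=1}^k a_i \odot (\bp_i, \1) \;=\; \left(\bigboxplus_{i=1}^k a_i \odot \bp_i\right) \times \left(\bigboxplus_{i=1}^k a_i\right),
\]
by the componentwise definition of hyper-addition on tuples. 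Hence the last coordinate of a point in this set equals $\1$ precisely when $\1 \in \bigboxplus_{i=1}^k a_i$, which is exactly the convex combination condition.

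For the forward direction, suppose $\bq \in \conv(T)$. By Lemma~\ref{lem:conv+comb}, there exist $a_i \in \HH^+$ and $\bp_i \in T$ with $\1 \in \bigboxplus_{i=1}^k a_i$ and $\bq \in \bigboxplus_{i=1}^k a_i \odot \bp_i$. Applying the displayed identity above shows $(\bq, \1) \in \bigboxplus_{i=1}^k a_i \odot (\bp_i, \1)$, and since each $(\bp_i, \1) \in \widetilde{T}$ and each $a_i \in \HH^+$, this exhibits $(\bq,\1)$ as a conic combination of points in $\widetilde{T}$. By Lemma~\ref{lem:conic+comb}, $(\bq,\1) \in \cone(\widetilde{T})$.

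For the reverse direction, suppose $(\bq, \1) \in \cone(\widetilde{T})$. By Lemma~\ref{lem:conic+comb}, there exist $a_i \in \HH^+$ and points $(\bp_i, \1) \in \widetilde{T}$ with $(\bq, \1) \in \bigboxplus_{i=1}^k a_i \odot (\bp_i, \1)$. Reading off the first $d$ coordinates gives $\bq \in \bigboxplus_{i=1}^k a_i \odot \bp_i$, while reading off the last coordinate gives $\1 \in \bigboxplus_{i=1}^k a_i \odot \1 = \bigboxplus_{i=1}^k a_i$. Together these say $\bq$ is a convex combination of the $\bp_i \in T$, so $\bq \in \conv(T)$ by Lemma~\ref{lem:conv+comb}.

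I do not anticipate any real obstacle here: the multi-valued nature of $\boxplus$ is compatible with componentwise addition, and the only thing to check carefully is that the Cartesian-product decomposition of the sum on tuples lets one match the convex combination condition $\1 \in \bigboxplus a_i$ with the homogenisation condition (last coordinate equals $\1$) in both directions. The proof is essentially the classical cone/convex correspondence, and no hyperfield-specific subtlety intervenes.
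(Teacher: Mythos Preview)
Your proof is correct and follows essentially the same approach as the paper: both directions use Lemmas~\ref{lem:conic+comb} and~\ref{lem:conv+comb} to pass between conic combinations of $(\bp_i,\1)$ and convex combinations of $\bp_i$, reading off the last coordinate to recover or impose the condition $\1 \in \bigboxplus_i a_i$. Your explicit display of the Cartesian-product decomposition of the sum is a minor expository addition, but the argument is otherwise identical.
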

\begin{proof}
    Let $\bq \in \conv(T)$.
    By Lemma~\ref{lem:conv+comb} we can write it as a finite convex combination of points in $T$,
    \begin{align*}
    \bq \in \bigboxplus_{i=1}^k a_i \odot \bp_i \quad , \quad \1 \in \bigboxplus_{i=1}^k a_i \quad \Rightarrow \quad  (\bq, \1) \in \bigboxplus_{i=1}^k a_i \odot (\bp_i, \1)
    \end{align*}
    immediately implying that $(\bq, \1) \in \conv(\widetilde{T}) \subseteq \cone(\widetilde{T})$.

    Conversely, let $(\bq, \1) \in \cone(\widetilde{T})$, by Lemma~\ref{lem:conic+comb} we can write it as a finite conic combination of points in $\widetilde{T}$,
    \begin{align*}
        (\bq, \1) \in \bigboxplus_{i=1}^k a_i \odot (\bp_i, \1) \, , \, a_i \in \HH^+ \, .
    \end{align*}
    Considering the final coordinate shows that $\1 \in \bigboxplus_{i=1}^k a_i$, implying that this must also be a convex combination.
    Restricting to the first $d$ coordinates gives us $\bq \in \conv(T)$.
\end{proof}
As an immediate corollary, one can take a conic set in $\HH^{d+1}$ and intersect it with the ordinary hyperplane $\{\bq \mid q_{d+1} = \1\}$ to obtain a convex set in $\HH^d$.
This will be useful for showing properties of conic sets pass to convex sets.

We now give some examples of convex sets over various hyperfields.
For explicit examples of convex sets over $\TT\RR$ we refer the reader to~\cite{LV}.

\begin{example}
    Figure~\ref{fig:S+Convex+Set} presents six examples of convex sets in $\SS^2$, using the same coordinates as Figure~\ref{fig:S2}.
    Note that convex and conic sets are equivalent over $\SS$ as all sums of positive elements contain $\1$.
    \begin{figure}[ht]
     \centering
     \begin{subfigure}[b]{0.25\textwidth}
         \centering
         \includegraphics[width=\textwidth]{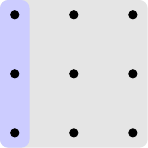}
         \caption{}
         \label{fig:S+Convex-1}
     \end{subfigure}
     \hfill
     \begin{subfigure}[b]{0.25\textwidth}
         \centering
         \includegraphics[width=\textwidth]{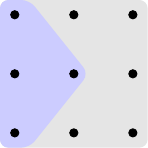}
         \caption{}
         \label{fig:S+Convex-2.pdf}
     \end{subfigure}
     \hfill
     \begin{subfigure}[b]{0.25\textwidth}
         \centering
         \includegraphics[width=\textwidth]{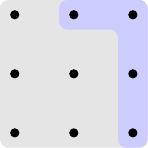}
         \caption{}
         \label{fig:S+Convex-3.pdf}
     \end{subfigure}
     \hfill
     \begin{subfigure}[b]{0.25\textwidth}
         \centering
         \includegraphics[width=\textwidth]{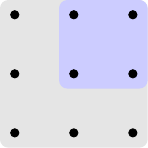}
         \caption{}
         \label{fig:S+Convex-4.pdf}
     \end{subfigure}
     \hfill
     \begin{subfigure}[b]{0.25\textwidth}
         \centering
         \includegraphics[width=\textwidth]{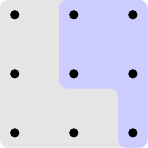}
         \caption{}
         \label{fig:S+Convex-5.pdf}
     \end{subfigure}
     \hfill
     \begin{subfigure}[b]{0.25\textwidth}
         \centering
         \includegraphics[width=\textwidth]{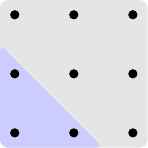}
         \caption{}
         \label{fig:S+Convex-6.pdf}
     \end{subfigure}
        \caption{Six examples of convex sets in blue over $\SS^2$.}
        \label{fig:S+Convex+Set}
\end{figure}
\end{example}

    \begin{figure}[ht]
        \centering
        \includegraphics[width=0.45\textwidth]{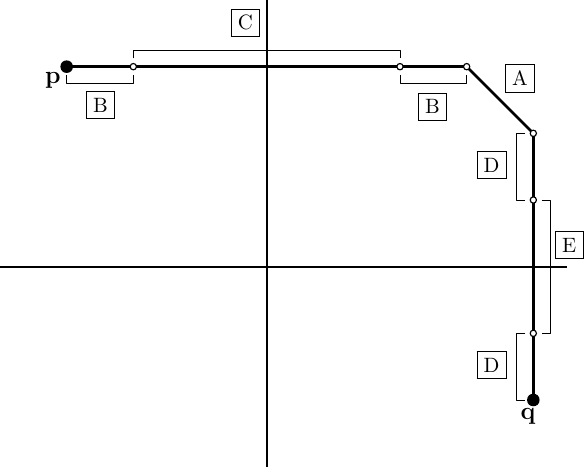} \hfill
        \includegraphics[width=0.45\textwidth]{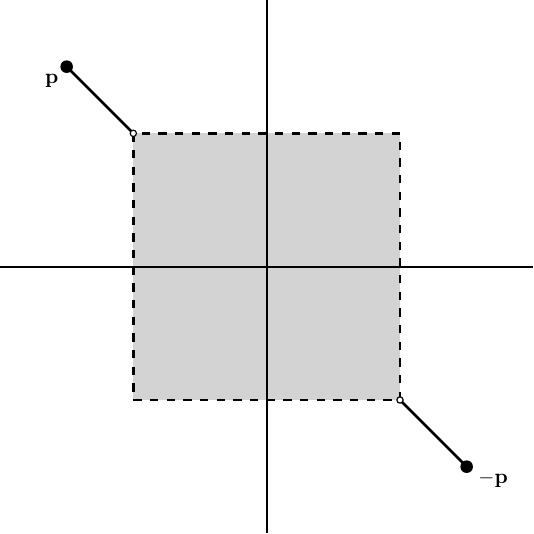}
        \caption{Convex hull of $\bp$ and $\bq$, and $\bp$ and $-\bp$ from Example~\ref{ex:RxZ2}.}
        \label{fig:RxZ2}
    \end{figure}

\begin{example}\label{ex:RxZ2}
    Let $\HH = \RR \rtimes \ZZ$ with unique ordering $\HH^+ = \RR_{>0} \times \ZZ$.
    Recall that $\1 = (1,0)$; we observe that if $a, b \in \HH^+$ such that $\1 \in a \boxplus b$, the possibilities for $a,b$ are
    \begin{align*}
    \begin{cases}
    a = (k, 0) \, , \, b = (1 - k, 0) & k \in (0,1) \subseteq  \RR_{>0} \, , \\
    a = (1, 0) \, , \, b = (k, g) & k \in \RR_{>0} \, , \, g \in \ZZ_{<0} \, , \\
    a = (k, g) \, , \, b = (1,0) & k \in \RR_{>0} \, , \, g \in \ZZ_{<0} \, .
    \end{cases}
    \end{align*}
    We use this to compute some convex sets in $\HH^2$.
    Consider the points $\bp = [(-1, 0) , (1, 0)]$ and $\bq = [(3, 1), (-2, -1)]$.
    A straightforward calculation shows that the possible values for $a \odot\bp \boxplus b \odot \bq$ are
\begin{enumerate}[label=\fbox{\Alph*}]
    \item{\quad\makebox[6cm][l]{$\begin{pmatrix} (3-3k, 1) \\ (k, 0) \end{pmatrix}$}  $a = (k, 0) \, , \, b = (1 - k, 0)  \, , \, k \in (0,1)$} 
    \item{\quad\makebox[6cm][l]{$\begin{pmatrix} (3k-1, 0) \\ (1, 0) \end{pmatrix}$} $a = (1, 0) \, , \, b = (k, -1) \, , \, k \neq 1/3$}
    \item{\quad\makebox[6cm][l]{$\SetOf{\begin{pmatrix} (k', g') \\ (1, 0) \end{pmatrix}}{g' < -1, k' \in \RR_{>0}}$} $a = (1, 0) \, , \, b = (1/3,-1)$}
    \item{\quad\makebox[6cm][l]{$\begin{pmatrix} (3, 1) \\ (k-2, -1) \end{pmatrix}$} $a = (k, -1) \, , \, b = (1,0) \, , \, k \neq 2$}
    \item{\quad\makebox[6cm][l]{$\SetOf{\begin{pmatrix} (3, 1) \\ (k', g') \end{pmatrix}}{g' < -1, k' \in \RR_{>0}}$}  $a = (2, -1) \, , \, b = (1,0)$}
\end{enumerate}
    with any other choices for $a, b$ returning either $\bp$ or $\bq$.
    A schematic representation of $\conv(\bp,\bq)$ is given in Figure~\ref{fig:RxZ2}.

    To highlight certain pathological behaviour convex sets over $\HH$ can have, we consider the convex hull of $\bp$ and $-\bp$.
    A similar calculation shows that all possible convex combinations of $\bp$ and $-\bp$ are
    \[
    \begin{cases}
        \begin{pmatrix} (1-2k, 0) \\ (2k - 1, 0) \end{pmatrix} & k \neq \frac{1}{2} \\
        \SetOf{\begin{pmatrix} (\ell, g) \\ (\ell', g') \end{pmatrix}}{\ell,\ell' \in \RR^\times \, , \, g,g' \in \ZZ_{<0}} & k = \frac{1}{2}
    \end{cases}
    \]
    In particular, the output is always a singleton except when $k = 1/2$: in this case we get cancellation in both coordinates, resulting in a set.
    A schematic representation of $\conv(\bp,-\bp)$ is given in Figure~\ref{fig:RxZ2}.
    While we have deliberately introduced no topological properties to $\HH$, there is the sense that $\conv(\bp,-\bp)$ should be one-dimensional but appear two-dimensional.
    This behaviour can occur for convex sets over any ordered hyperfield besides fields.
\end{example}

\added{
\begin{example}
    We give one further example to exhibit pathological behaviour of convex sets over hyperfields, namely that one point sets may not be convex.
    Recall the quotient hyperfield $\HH := \RR((t))/S$ introduced in Example~\ref{ex:finite+ordered}, and fix the ordering $\HH^+ := \{\1, \bar{t}\}$.
    The one point set $\bp = (\1) \in \HH^1$ is not convex:
    using \eqref{eq:addition+table}, we see that we can pick the scalars $a = \1$ and $b = \bar{t}$ such that $a \boxplus b \ni \1$, and
    \[
    \1\odot \bp \boxplus \bar{t}\odot \bp = \{\1, \bar{t}\} \nsubseteq \bp \, .
    \]
    The intuition behind this is that its preimage $S = \tau^{-1}(\1)$ in the quotient map is not convex over $\RR((t))$.
    We shall make this precise in Lemma~\ref{lem:pre-conv-conv}.
\end{example}
}

%
We end by showing that convex sets over hyperfields satisfy certain properties one would expect in a theory of convexity.
Analogous statements hold for conic sets.
\begin{proposition}\label{prop:intconv-conv}
The intersection of convex sets over $\HH^d$ is convex. 
\end{proposition}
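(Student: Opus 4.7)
The proof is a direct unpacking of the definition of convexity, and the main (and essentially only) step is to observe that the defining inclusion of a convex set passes through arbitrary intersections with no interaction between different members of the family.

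Concretely, my plan is as follows. Let $\{S_\alpha\}_{\alpha \in A}$ be an arbitrary family of convex subsets of $\HH^d$ and set $S = \bigcap_{\alpha \in A} S_\alpha$. To verify that $S$ is convex, I fix $\bp, \bq \in S$ and scalars $a, b \in \HH^+$ with $\1 \in a \boxplus b$, and I must show that $a \odot \bp \boxplus b \odot \bq \subseteq S$. Since $\bp$ and $\bq$ lie in $S$, they lie in every $S_\alpha$. Each $S_\alpha$ is convex, so applying the defining inclusion~\eqref{eq:convex} inside $S_\alpha$ gives $a \odot \bp \boxplus b \odot \bq \subseteq S_\alpha$ for every $\alpha \in A$. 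Taking the intersection over $\alpha$ yields $a \odot \bp \boxplus b \odot \bq \subseteq \bigcap_{\alpha \in A} S_\alpha = S$, as required.

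There is essentially no obstacle here: the multivaluedness of $\boxplus$ does not cause any difficulty because the condition~\eqref{eq:convex} is itself a \emph{set} containment, and set containment is preserved by intersections on the right-hand side. The only subtlety worth flagging is the convention on empty intersections; when $A = \emptyset$ the intersection is interpreted as $\HH^d$, which is trivially convex, so the statement holds vacuously in that degenerate case as well. An identical argument, replacing the condition $\1 \in a \boxplus b$ by arbitrary $a, b \in \HH^+$, establishes the analogous statement for conic sets.
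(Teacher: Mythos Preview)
Your proof is correct and follows essentially the same approach as the paper's own proof: both directly verify the defining inclusion~\eqref{eq:convex} by using that $\bp,\bq$ lie in every member of the family. The only cosmetic differences are that the paper restricts to a finite family $S_1,\dots,S_n$ and separately dispenses with the empty-or-singleton intersection case, whereas you handle an arbitrary family uniformly; neither change is material.
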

\begin{proof}
Let $S_1, \dots ,S_n \subseteq \HH^d$ be convex sets.
If $\bigcap_{i=1}^n S_i = \emptyset$, then it is trivially convex.
Assume that $\big | \bigcap_{i=1}^n S_i \big| \geq 1$, and take $\bp, \bq \in \bigcap_{i=1}^n S_i$.
Then $\lambda \odot \bp \boxplus \mu \odot \bq \subseteq S_i$ for all $\lambda,\mu \in \HH^+$ such that $\lambda \boxplus \mu \ni \1$. This implies that $\lambda \odot \bp \boxplus \mu \odot \bq \subseteq \bigcap_{i=1}^n S_i$, and hence is convex.
\end{proof}

\begin{proposition}
    The Cartesian product of two convex sets is convex. 
\end{proposition}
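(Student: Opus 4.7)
The plan is to prove this directly from the definition of convexity, exploiting the coordinatewise nature of hyper-addition on tuples that was set up in Section~\ref{sec:background}. Let $A \subseteq \HH^{d_1}$ and $B \subseteq \HH^{d_2}$ be convex, and take two arbitrary points $(\bp_1, \bp_2), (\bq_1, \bq_2) \in A \times B$ along with scalars $a, b \in \HH^+$ satisfying $\1 \in a \boxplus b$. The goal is to verify that $a \odot (\bp_1, \bp_2) \boxplus b \odot (\bq_1, \bq_2) \subseteq A \times B$.

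First I would unpack the definitions. Using the componentwise definitions of scalar multiplication and hyper-addition on tuples, one has
\[
a \odot (\bp_1, \bp_2) \boxplus b \odot (\bq_1, \bq_2) = \SetOf{(\br_1, \br_2) \in \HH^{d_1 + d_2}}{\br_1 \in a \odot \bp_1 \boxplus b \odot \bq_1,\ \br_2 \in a \odot \bp_2 \boxplus b \odot \bq_2}.
\]
Here I am using that $a \odot (\bp_1, \bp_2) = (a \odot \bp_1, a \odot \bp_2)$ (and similarly for $b$), and that the sum $(\bx_1,\bx_2) \boxplus (\by_1,\by_2)$ in $\HH^{d_1+d_2}$ decomposes coordinatewise into sums in $\HH^{d_1}$ and $\HH^{d_2}$.

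Then I would apply the convexity hypotheses in each factor separately. Since $A$ is convex and $\bp_1, \bq_1 \in A$ with $a, b \in \HH^+$ and $\1 \in a \boxplus b$, we get $a \odot \bp_1 \boxplus b \odot \bq_1 \subseteq A$, so every $\br_1$ arising above lies in $A$. The analogous statement gives $\br_2 \in B$. Hence $(\br_1, \br_2) \in A \times B$, which is exactly what is needed.

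There is essentially no obstacle here: the only thing to be careful about is the bookkeeping that hyper-addition on $\HH^{d_1 + d_2}$ really does split coordinatewise, but this is built into the definition of $\boxplus$ on tuples given at the end of Section~\ref{sec:background}. The same argument, with the condition $\1 \in a \boxplus b$ dropped, shows that the Cartesian product of two conic sets is conic, which can be recorded in passing.
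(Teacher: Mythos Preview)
Your proof is correct and follows essentially the same approach as the paper: both arguments unpack the coordinatewise structure of scalar multiplication and hyper-addition on tuples, then invoke convexity of each factor separately.
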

\begin{proof}
    Let $S \subseteq \HH^n$ and $T\subseteq\HH^m$ be convex sets. Then their Cartesian product is
    \[
    S \times T := \{(\bp,\bq) \, : \, \bp \in S, \, \bq \in T \} \subseteq \HH^{n+m}.
    \]
    Let $(\bp_1,\bq_1),(\bp_2,\bq_2) \in S \times T$ and $a,b \in \HH^+$ such that $a\boxplus b \ni \1$, then the convex combination formed by these elements is,
    \[ 
    a \odot (\bp_1,\bq_1) \boxplus b \odot (\bp_2,\bq_2) = (a \odot \bp_1 \boxplus b \odot \bp_2, a \odot \bq_1 \boxplus b \odot \bq_2). 
    \]
    As both $S$ and $T$ are convex, the first and second coordinates are subsets of $S$ and $T$ respectively. Thus, the tuple is a subset of $S \times T$ and hence convex.
\end{proof}

\begin{proposition}
    The projection of a convex set to a coordinate hyperplane is convex.
\end{proposition}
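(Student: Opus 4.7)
The plan is a direct verification using the coordinate-wise structure of hyper-operations on $\HH^d$. Without loss of generality, I would fix a coordinate hyperplane, say the projection onto the first $d-1$ coordinates, and define $\pi \colon \HH^d \to \HH^{d-1}$ by $\pi(p_1,\dots,p_d) = (p_1,\dots,p_{d-1})$. The claim for an arbitrary coordinate hyperplane follows by relabelling coordinates. Let $S \subseteq \HH^d$ be convex; the goal is to show that for any $\bp', \bq' \in \pi(S)$ and any $a,b \in \HH^+$ with $\1 \in a \boxplus b$, we have $a \odot \bp' \boxplus b \odot \bq' \subseteq \pi(S)$.

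The first step is to lift: choose $\bp, \bq \in S$ with $\pi(\bp) = \bp'$ and $\pi(\bq) = \bq'$. Then by convexity of $S$ we have $a \odot \bp \boxplus b \odot \bq \subseteq S$. The second step is the key technical observation. Recall from the excerpt that for tuples,
\[
a \odot \bp \boxplus b \odot \bq = \bigcup_{r_i \in a \odot p_i \boxplus b \odot q_i} (r_1, \dots, r_d) \, ,
\]
so every coordinate can be chosen independently from the corresponding one-dimensional hyper-sum. Consequently, given any $\br' = (r_1, \dots, r_{d-1}) \in a \odot \bp' \boxplus b \odot \bq'$, each $r_i \in a \odot p_i \boxplus b \odot q_i$ for $i \leq d-1$ (since $p'_i = p_i$ and $q'_i = q_i$), and the set $a \odot p_d \boxplus b \odot q_d$ is non-empty by definition of a hyperoperation. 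Picking any $r_d$ in this last set, the completed tuple $(r_1,\dots,r_{d-1},r_d)$ lies in $a \odot \bp \boxplus b \odot \bq \subseteq S$, and its image under $\pi$ is $\br'$. Hence $\br' \in \pi(S)$, which concludes convexity.

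There is no real obstacle here; the only conceptually delicate point is recognising that the coordinate-wise definition of $\boxplus$ on $\HH^d$ allows arbitrary, independent choices per coordinate, so a partial choice in $\HH^{d-1}$ can always be extended to a point of the full hyper-sum in $\HH^d$. This is a feature of the vector-space-over-a-hyperfield structure defined in the excerpt, not something that requires any assumption on $\HH$ (such as stringency or being a factor hyperfield). The same argument, with the condition $\1 \in a \boxplus b$ removed, handles the analogous statement for conic sets.
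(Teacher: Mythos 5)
Your proof is correct and follows essentially the same route as the paper's: both exploit the coordinate-wise definition of $\boxplus$ and $\odot$ on $\HH^d$ to show that projection commutes with taking convex combinations. Your version is slightly more careful than the paper's terse chain of equalities, in that you explicitly lift $\bp',\bq'\in\pi(S)$ to preimages in $S$ and, crucially, you justify the containment $a\odot\pi(\bp)\boxplus b\odot\pi(\bq)\subseteq\pi(a\odot\bp\boxplus b\odot\bq)$ by noting that any partial choice of coordinates in the projected hyper-sum extends to a full tuple because $a\odot p_d\boxplus b\odot q_d$ is non-empty — a point the paper's proof implicitly absorbs into an asserted equality.
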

\begin{proof}
    Let $S \subseteq \HH^d$ be a convex set and consider the coordinate projection $\pi_i : \HH^d \rightarrow \HH^{d-1}$ that projects onto the $i$th coordinate hyperplane.
    We show the statement for $\pi:= \pi_1$
    , other projections proved identically.
    Take $\bp,\bq \in S$ and $a,b \in \HH^+$, then the convex combination is preserved under the projection:
    \begin{align}
        \pi(a \odot \bp \boxplus b \odot \bq) & = \pi(a \odot p_1 \boxplus b \odot q_1, \, \dots \, ,a \odot p_d \boxplus b \odot q_d) \nonumber \\
        & = (a \odot p_2 \boxplus b \odot q_2, \, \dots \, ,a \odot p_d \boxplus b \odot q_d) \nonumber \\
        & = a \odot (p_2, \, \dots \, ,p_d) \boxplus b \odot (q_2, \, \dots \, ,q_d) \nonumber \\
        & = a \odot \pi(\bp) \boxplus b \odot \pi(\bq). \nonumber
    \end{align}
    By the convexity of $S$, this implies that $a \odot \pi(\bp) \boxplus b \odot \pi(\bq) = \pi(a \odot \bp \boxplus b \odot \bq) \subseteq \pi(S)$. Hence, the projection $\pi(S)$ is convex.
\end{proof}

\begin{lemma}\label{lem:pre-conv-conv}
Let $f:\HH_1 \rightarrow \HH_2$ be an order preserving hyperfield homomorphism.
\added{If $S \subseteq \HH_2^d$ is convex then $f^{-1}(S) \subseteq \HH_1^d$ is convex.}
\end{lemma}
\begin{proof}
\added{Let $\bp, \bq \in f^{-1}(S)$.
For any $a,b \in \HH_1^+$ satisfying $a\boxplus b \ni \1_1$, we have} 
\begin{equation} \label{eq:convexity}
f(a \odot \bp \boxplus b \odot \bq) \subseteq f(a) \odot f(\bp) \boxplus f(b) \odot f(\bq).    
\end{equation}
\added{As $f$ is order preserving, we have $f(a), f(b) \in \HH^+_2$.
Moreover, $\1_1 \in a \boxplus b$ implies that $\1_2 = f(\1_1) \in f(a \boxplus b) \subseteq f(a) \boxplus f(b)$. 
By the convexity of $S$ and \eqref{eq:convexity}, this implies that $f(a \odot \bp \boxplus b \odot \bq) \subseteq S$.
Hence, $f^{-1}(S)$ is convex.
}
\end{proof}

\subsection{Convex and conic sets over quotient hyperfields}

The aim of this subsection to show the following structure theorem for convex and conic sets over quotient hyperfields.
\begin{theorem}\label{thm:quotient+structure}
    Let $\HH = \FF/U$ be an ordered quotient hyperfield with quotient map $\tau\colon \FF \rightarrow \HH$.
    Then for any $T \subseteq \FF^d$, we have
    \[
    \cone(\tau(T)) = \bigcup_{\tau(T) = \tau(T')} \tau(\cone(T')) \quad , \quad \conv(\tau(T)) = \bigcup_{\tau(T) = \tau(T')} \tau(\conv(T'))
    \]
\end{theorem}
This theorem generalises~\cite[Proposition 2.1]{Develin+Yu:07} and~\cite[Theorem 3.14]{LV} for (signed) tropically convex sets, as the (signed) valuation map can be seen as a quotient map by Example~\ref{ex:tropical+quotients}.
It will be a consistently useful tool throughout for investigating convexity over quotient hyperfields.

One direction of this containment is not special to the quotient map, it holds for any order preserving homomorphism.
\begin{proposition} \label{prop:cone+containment}
Let $f\colon\HH_1\rightarrow\HH_2$ be an order preserving homomorphism between ordered hyperfields.
For any set $T \subseteq \HH_1^d$, we have 
\[
f(\cone(T)) \subseteq \cone(f(T)) \quad , \quad f(\conv(T)) \subseteq \conv(f(T)) \, .
\]
\end{proposition}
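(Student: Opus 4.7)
The plan is to use the explicit characterisation of conic and convex hulls as finite conic (respectively convex) combinations established in Lemmas \ref{lem:conic+comb} and \ref{lem:conv+comb}. In both cases, the strategy is the same: take a point in the image of the hull, lift it to a combination of points in $T$, push that combination forward through $f$, and verify that the image is again a combination of the appropriate type of points in $f(T)$.

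For the conic containment, suppose $\bq' \in f(\cone(T))$, so $\bq' = f(\bq)$ for some $\bq \in \cone(T)$. By Lemma \ref{lem:conic+comb}, there exist a finite multiset $\{\bp_1,\dots,\bp_k\} \subseteq T$ and scalars $a_i \in \HH_1^+$ such that $\bq \in \bigboxplus_{i=1}^k a_i \odot \bp_i$. Applying the entrywise extension of $f$ to both sides and iterating the two axiomatic relations $f(a \boxplus b) \subseteq f(a) \boxplus f(b)$ and $f(a \odot \bp) = f(a) \odot f(\bp)$ (the vectorial versions of which are recorded at the end of Section~\ref{sec:background}) gives
\[
f(\bq) \in f\!\left(\bigboxplus_{i=1}^k a_i \odot \bp_i\right) \subseteq \bigboxplus_{i=1}^k f(a_i) \odot f(\bp_i) \, .
\]
Since $f$ is order preserving, $f(a_i) \in \HH_2^+$ for each $i$, and clearly $f(\bp_i) \in f(T)$. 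Thus $f(\bq)$ is a conic combination of points of $f(T)$, and so lies in $\cone(f(T))$ by Lemma~\ref{lem:conic+comb}.

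For the convex containment, we run exactly the same argument starting from a convex combination $\bq \in \bigboxplus_{i=1}^k a_i \odot \bp_i$ with the additional constraint $\1_1 \in \bigboxplus_{i=1}^k a_i$. The only additional thing to check is that this unit-sum condition is preserved under $f$. But $f(\1_1) = \1_2$ and the inclusion $f(\bigboxplus_{i=1}^k a_i) \subseteq \bigboxplus_{i=1}^k f(a_i)$ together give $\1_2 \in \bigboxplus_{i=1}^k f(a_i)$, so the resulting combination in $\HH_2^d$ is indeed convex.

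There is no real obstacle here: the proposition is essentially a bookkeeping consequence of the hyperfield homomorphism axioms, generalising the push-forward observation \eqref{eq:pushforward+hom} from a single affine polynomial to iterated sums with positive coefficients. The main subtlety, if any, is simply being careful that the containment $f(a \boxplus b) \subseteq f(a) \boxplus f(b)$ is not an equality, so the argument must proceed by extracting a single combination for $\bq$ and pushing it forward, rather than by trying to identify $f(\cone(T))$ as a union.
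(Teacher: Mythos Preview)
Your proof is correct and follows essentially the same approach as the paper's own proof: pick a point in the hull, express it as a finite conic (respectively convex) combination via Lemma~\ref{lem:conic+comb} (respectively Lemma~\ref{lem:conv+comb}), push forward through $f$ using the homomorphism inclusion and the order-preserving property, and in the convex case additionally observe that $\1_2 = f(\1_1) \in f(\bigboxplus a_i) \subseteq \bigboxplus f(a_i)$. The structure and the key ingredients are identical.
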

\begin{proof}
Let $f(\bq) \in f(\cone(T))$ where $\bq \in \cone(T)$.
By Lemma~\ref{lem:conic+comb}, $\bq$ is contained in a finite conic combination,
\begin{align*}
\bq &\in \bigboxplus_{i=1}^k a_i \odot \bp_i \quad , \quad a_i \in \HH_1^+ \\
\Rightarrow f(\bq) &\in f(\bigboxplus_{i=1}^k a_i \odot \bp_i) \subseteq \bigboxplus_{i=1}^k f(a_i) \odot f(\bp_i) \, ,
\end{align*}
where the final step follows from properties of hyperfield homomorphisms.
Moreover, as $f$ is order preserving, it follows that $f(a_i) \in \HH_2^+$, and so $f(\bq) \in \cone(f(T))$.

The proof of the convex claim is identical, with the additional observation that
\[
\1_1 \in \bigboxplus_{i=1}^k a_i  \quad \Rightarrow \quad \1_2 = f(\1_1) \in f\left(\bigboxplus_{i=1}^k a_i\right) \subseteq \bigboxplus_{i=1}^k f(a_i) \, .
\]
\end{proof}

\begin{proof}[Proof of Theorem~\ref{thm:quotient+structure}]
Proposition~\ref{prop:cone+containment} gives one containment, it remains to show the other direction.

Let $\bar{\bq} \in \cone(\tau(T))$, we can write $\bar{\bq}$ as a finite conic linear combination $\bar{\bq} \in \bigboxplus_{i=1}^k \bar{a}_i \odot \bar{\bp}_i$ where $\bp_i \in T$.
In the $j$-th coordinate, this gives
\[
 \bar{q}_j \in \bigboxplus_{i=1}^k \bar{a}_i \odot \bar{p}_{ij} = \bigboxplus_{i=1}^k \overline{a_i \cdot p_{ij}} = \SetOf{\overline{\sum_{i=1}^k a_i \cdot p_{ij}\cdot u_{ij}}}{u_{ij} \in U} \, .
\]
Let $\bq = (q_1, \dots, q_d)$ be a representative in $\FF^d$ of the coset $\bar{\bq}$, then $q_j = \sum_{i=1}^k a_i \cdot p_{ij}\cdot v_{ij}$ for some choice of $v_{ij} \in U$.
Define $\br_i = (p_{i1}\cdot v_{i1}, \dots, p_{id}\cdot v_{id}) \in \FF^d$, then $\bq = \sum_{i=1}^k a_i \cdot \br_i \in \cone(\br_1, \dots, \br_k)$.
Moreover, $\tau(\br_i) = \tau(\bp_i)$ and so the reverse inclusion holds

For the convex claim, if $\1 \in \bigboxplus \bar{a}_i$ then there exists $w_i \in U$ such that $1 = \sum a_i\cdot w_i$.
Repeating the previous proof with $\br_i = (p_{i1}\cdot v_{i1}w_{i}^{-1}, \dots, p_{id}\cdot v_{id}w_{i}^{-1})$ gives $\bq = \sum_{i=1}^k a_i\cdot w_i \cdot \br_i \in \conv(\br_1, \dots, \br_k)$.
\end{proof}
\begin{example}\label{ex:conv-not-pf}
Consider the set $S = \{(\1,-\1), (-\1, \1)\} \subseteq \SS^2$.
The smallest convex set containing $S$ is $\conv(S) = \SS^2$, as the sum of the two points equals the whole of $\SS^2$.

Recall that $\SS = \RR/\RR_{> 0}$.
Let $T = \{(2,-1),(-1,2) \} \subseteq \RR^2$ be a choice of points such that $\tau(T) = S$.
Then, $\conv(T)$ is the line segment connecting $(2,-1)$ and $(-1,2)$ passing through the upper right quadrant.
This pushes forward to $\tau(\conv(T)) = \{(-\1,\1),(\0,\1),(\1,\1),(\1,\0),(\1,-\1)\}$, a subset of $\conv(S)$.
This is depicted in Figure \ref{fig:conv-not-pf}.
\begin{figure}
    \centering
        \includegraphics{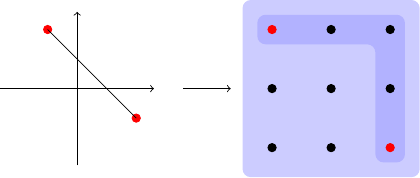}
    \caption{First case in Example~\ref{ex:conv-not-pf}, showing $\conv((2,-1),(-1,2)) \subseteq \RR^2$ and its image in the quotient map $\tau(\conv((2,-1),(-1,2))) \subseteq \conv((\1,-\1),(-\1,\1)) = \SS^2$}
    \label{fig:conv-not-pf}
\end{figure}

Note that it is not possible to find two points in $\RR^2$ such that their convex hull pushes forward to $\conv(S)$.
However, we can find a set of points $T' \subseteq \RR^2$ with larger cardinality than two such that $\conv(S) = \tau(\conv(T'))$ and $\tau(T) = S$.
Consider the set $T' = \{(-1,2), (-2,1),(1,-2),(2,-1)\}$: this pushes forward to $\tau(T') = S$ as before, just not one-to-one.
However, $\conv(T')$ is a quadrilateral that covers all quadrants of $\RR^2$, and so $\conv(S) = \tau(\conv(T'))$. This is depicted in Figure~\ref{Fig:box-conv-sgn-pf}.
\begin{figure}
    \centering
    \includegraphics{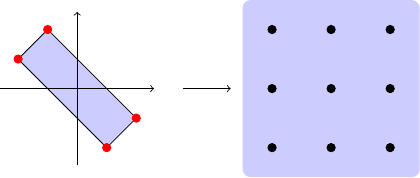}
    \caption{Second case in Example~\ref{ex:conv-not-pf}, showing $\conv(T') \subseteq \RR^2$ and its image in the quotient map $\conv(S) = \tau(\conv(T')) = \SS^2$ }
    \label{Fig:box-conv-sgn-pf}
\end{figure}
\end{example}

\subsection{Radon, Helly and Carath\'{e}odory Theorems for quotient hyperfields}
\label{sec:helly}

This section will prove generalisations of three theorems of classical convex geometry to ordered quotient hyperfields: namely Radon, Helly and Carath\'{e}odory's theorems. 
The main technique will be to lift back to an ordered field where the manipulation is more concrete, then push-forward back to the quotient hyperfield via the quotient map.

We recall the Radon, Helly and Carath\'{e}odory theorems over ordered fields.
For proofs of the statements, we refer to~\cite{Matousek:2002}: while the proofs are stated over $\RR$, they only make use of the fact that $\RR$ is an ordered field.
Throughout the following, we let $\FF$ be an arbitrary ordered field.

\begin{theorem}[Radon's Theorem]\label{thm:rad-ord-field}
Let $\FF$ be an ordered field and $\{\bp_1, \dots, \bp_{d+2}\} \subseteq \FF^d$.
Then there exists a point $\bq \in \FF^d$ and a nonempty subset $I \subsetneq [d+2]$ such that \[
\bq \in \conv(\bp_i \mid i \in I) \cap \conv(\bp_j \mid j \notin I) \, .
\]
\end{theorem}


\begin{theorem}[Helly's Theorem]\label{thm:hell-ord-field}
Let $S_1, \dots, S_n$ be a finite collection of convex sets in $\FF^d$, with $n \geq d+1$.
The intersection of each $d+1$ collection of sets is non-empty if and only if the intersection of all the sets is non-empty.
\end{theorem}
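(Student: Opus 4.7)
The plan is to prove the nontrivial direction by induction on $n$, using Radon's theorem (Theorem~\ref{thm:rad-ord-field}) as the key geometric input. The reverse direction is immediate, since if $\bigcap_{i=1}^n S_i$ is nonempty, so is every subintersection; the content is entirely in the forward direction.

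The base case $n = d+1$ is precisely the hypothesis. For the inductive step, fix $n \geq d+2$ and assume the result for collections of $n-1$ convex sets. Given $S_1,\ldots,S_n$ in $\FF^d$ satisfying the $(d+1)$-wise intersection property, each subcollection $\{S_j : j \neq i\}$ of $n-1$ sets inherits this property, since any $(d+1)$-subset of it is also a $(d+1)$-subset of the original. By the inductive hypothesis, I can choose a point $\bp_i \in \bigcap_{j \neq i} S_j$ for each $i \in [n]$.

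Now apply Radon (Theorem~\ref{thm:rad-ord-field}) to the first $d+2$ of these points: there exists a nonempty proper $I \subsetneq [d+2]$ and a point
\[
\bq \in \conv(\bp_i : i \in I) \cap \conv(\bp_j : j \in [d+2] \setminus I).
\]
I claim $\bq \in S_k$ for every $k \in [n]$, which finishes the proof. Fix $k$; either $k \in I$ or $k \notin I$ (the latter case including $k > d+2$), and these are symmetric, so assume $k \notin I$. For every $i \in I$ we have $i \neq k$, hence $\bp_i \in S_k$ by the construction of $\bp_i$. Convexity of $S_k$ then gives $\conv(\bp_i : i \in I) \subseteq S_k$, and in particular $\bq \in S_k$.

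The entire argument relies only on the definition of convexity in $\FF^d$ and the statement of Radon's theorem over $\FF$, both of which are available for any ordered field. There is no genuine obstacle here: the classical proof over $\RR$ transfers verbatim, which is exactly the point being made by the remark that ``the proofs are stated over $\RR$, they only make use of the fact that $\RR$ is an ordered field.'' The real work will come later when these field-level results are pushed through the quotient map $\tau\colon\FF\rightarrow\HH$ to obtain the hyperfield analogues.
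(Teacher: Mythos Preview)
Your proof is correct and is precisely the classical inductive argument via Radon's theorem. The paper does not actually give its own proof of this statement; it only cites \cite{Matousek:2002} and remarks that the proof there, although stated over $\RR$, uses only ordered-field axioms---which is exactly what you have verified by writing it out.
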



\begin{theorem}[Carath\'{e}odory's Theorem]\label{thm:car-ord-field}
Let $T \subseteq \FF^d$.
If $\bq \in \conv(T)$, then $\bq$ can be written as a convex combination of at most $d+1$ points in $T$.
\end{theorem}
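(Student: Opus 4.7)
The plan is to run the classical ``reduce a convex combination by a null combination'' argument, which goes through unchanged over an arbitrary ordered field since it relies only on $\FF$-linear algebra and totality of the order on $\FF$.

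First, by Lemma~\ref{lem:conv+comb} applied to the trivial hyperfield structure on $\FF$, I would write $\bq = \sum_{i=1}^k \lambda_i \bp_i$ with $\bp_i \in T$, $\lambda_i \in \FF^+$, and $\sum_{i=1}^k \lambda_i = 1$. The base case $k \leq d+1$ is immediate, so assume $k > d+1$. The $k-1 > d$ difference vectors $\bp_2 - \bp_1, \dots, \bp_k - \bp_1$ lie in the $d$-dimensional $\FF$-vector space $\FF^d$, and are therefore $\FF$-linearly dependent; extract a non-trivial relation $\sum_{i=2}^k \mu_i (\bp_i - \bp_1) = \underline{\0}$ and set $\mu_1 := -\sum_{i=2}^k \mu_i$. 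This yields a non-zero tuple $(\mu_1, \dots, \mu_k) \in \FF^k$ satisfying both $\sum_{i=1}^k \mu_i \bp_i = \underline{\0}$ and $\sum_{i=1}^k \mu_i = 0$.

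Next, I would use this null combination to shorten the convex combination representing $\bq$. For every $t \in \FF$ one still has $\bq = \sum_{i=1}^k (\lambda_i - t \mu_i)\, \bp_i$ and $\sum_{i=1}^k (\lambda_i - t \mu_i) = 1$. After possibly replacing $(\mu_i)$ by $(-\mu_i)$, assume at least one $\mu_i \in \FF^+$. The finite set $\{\lambda_i / \mu_i : \mu_i \in \FF^+\}$ is then a non-empty subset of $\FF^+$; let $t^\star$ be its minimum, which exists by totality of $\prec$ on $\FF$. For this choice of $t^\star$, every new coefficient $\lambda_i - t^\star \mu_i$ lies in $\FF^+ \cup \{0\}$, and at least one vanishes. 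Deleting a vanishing term expresses $\bq$ as a convex combination of strictly fewer than $k$ points of $T$, and iterating terminates with at most $d+1$ points.

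The only place the ordered-field structure is genuinely used is the minimum-ratio step, which requires totality of $\prec$ together with closure of $\FF^+$ under multiplication and inversion; every other ingredient is pure linear algebra over a field. I therefore expect no real obstacle here: this is the standard textbook argument, reproduced verbatim from $\RR$ to an arbitrary ordered field, as anticipated by the authors' reference to Matou\v{s}ek.
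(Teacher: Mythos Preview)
Your proposal is correct and is exactly the standard linear-dependence/minimum-ratio argument that the paper defers to by citing Matou\v{s}ek; the paper gives no proof of its own for this statement, so there is nothing further to compare.
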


\begin{remark}
Over $\RR$, each of these theorems have many variants and strengthenings.
For example, Helly's theorem can be strengthened to a collection of infinite sets provided that the convex sets are also compact.
These variants usually involve topological arguments, which are more intricate over arbitrary ordered fields: in general, we cannot place a metric on $\FF$ as we do in $\RR$, see~\cite{dobbs+2000}.
We therefore stick to the simplest form of each statement that require no topological concerns.
\end{remark}

We now use these theorems over ordered fields to prove analogues over ordered quotient hyperfields.

\begin{theorem}\label{thm:radon}
Let $\HH$ be an ordered quotient hyperfield, and $\{\bp_1, \dots, \bp_{d+2}\} \subseteq \HH^d$.
Then there exists a point $\bq \in \HH^d$ and a nonempty subset $I \subsetneq [d+2]$ such that \[
\bq \in \conv(\bp_i \mid i \in I) \cap \conv(\bp_j \mid j \notin I) \, .
\]
\end{theorem}
\begin{proof}
Let $\HH = \FF/U$ with quotient map $\tau\colon\FF \rightarrow \HH$.
For each $\bp_i$, take some lift $\tilde{\bp}_i \in \tau^{-1}(\bp_i)$.
By Theorem~\ref{thm:rad-ord-field}, there exists some non-empty $I \subsetneq [d+2]$ and $\tilde{\bq} \in \FF^d$ such that
\begin{align*}
\tilde{\bq} &\in \conv(\tilde{\bp}_i \mid i \in I) \cap \conv(\tilde{\bp}_j \mid j \notin I) \, .
\end{align*}
Applying $\tau$ and using Proposition \ref{prop:cone+containment} gives, 
\begin{align*}
     \bq = \tau(\tilde{\bq}) &\in \tau(\conv(\tilde{\bp}_i \mid i \in I)) \cap \tau(\conv(\tilde{\bp}_j \mid j \notin I)) \\
     &\subseteq \conv(\tau(\tilde{\bp}_i) \mid i \in I) \cap \conv(\tau(\tilde{\bp}_j) \mid j \notin I) \\
     & = \conv(\bp_i \mid i \in I) \cap \conv(\bp_j \mid j \notin I).
\end{align*}
\end{proof}

\begin{theorem} \label{thm:helly}
Let $\HH$ be an ordered quotient hyperfield, and $S_1, \dots, S_n$ be a finite collection of convex sets in $\HH^d$ with $n \geq d+1$.
The intersection of each $d+1$ collection of sets is non-empty if and only if the intersection of all the sets is non-empty.
\end{theorem}
\begin{proof}
Let $\HH = \FF/U$ with quotient map $\tau\colon\FF \rightarrow \HH$.
The pre-image of a convex set is convex by Lemma~\ref{lem:pre-conv-conv}, thus $T_i = \tau^{-1}(S_i)$, are each convex sets over $\FF^d$.
If the intersection of each $d+1$ collection of $S_1, \dots, S_n$ is non-empty over $\HH^d$, then this is also the case for $T_1, \dots ,T_n$ as $\tau$ is surjective.
Then by Theorem \ref{thm:hell-ord-field}, the intersection of the complete collection is non-empty, i.e.
\begin{align*}
\emptyset \neq \bigcap_{i=1}^n T_i \quad \Rightarrow \quad \emptyset &\neq \tau\left (\bigcap_{i=1}^n T_i\right ) \subseteq \bigcap_{i=1}^n \tau(T_i) = \bigcap_{i=1}^n S_i \, ,
\end{align*}
where the last step is deduced via Proposition~\ref{prop:cone+containment}.
\end{proof}

\begin{theorem}\label{thm:caratheodory+hyp}
Let $\HH$ be an ordered quotient hyperfield, and $T \subseteq \HH^d$.
If $\bq \in \conv(T)$, then $\bq$ \added{is contained in} a convex combination of at most $d+1$ points in $T$.
\end{theorem}
\begin{proof}
Let $\HH = \FF/U$ with quotient map $\tau\colon\FF \rightarrow \HH$.
By Theorem~\ref{thm:quotient+structure}, there exists $\tilde{T} \subseteq \FF^d$ and $\tilde{\bq} \in \conv(\tilde{T})$ such that $\tau(\tilde{\bq}) = \bq$ and $\tau(\tilde{T}) = T$.
By Theorem \ref{thm:car-ord-field}, we can write $\tilde{\bq}$ as a finite convex combination of $d+1$ points $\tilde{\bq} = \sum_{j=1}^{d+1} a_j \cdot \tilde{\bp}_j$ where $\tilde{\bp}_j \in \tilde{T}$.
Applying $\tau$ gives
\[
\bq = \tau(\tilde{\bq}) = \tau \left(\sum_{j=1}^{d+1} a_j \cdot \tilde{\bp}_{j}\right) \subseteq \bigboxplus_{j=1}^{d+1} \tau(a_j) \odot \tau(\tilde{\bp}_{j}) \, .
\]
Note that $\tau(\tilde{\bp}_{j}) \in T$, and $1 = \sum_{j=1}^{d+1} a_j$ implies $\1 \in \boxplus_{j=1}^{d+1} \tau(\gamma_j)$.
Thus, $\bq$ can be written as a convex combination of at most $d+1$ points in $T$.
\end{proof}

\section{Halfspaces and hyperplanes}\label{sec:halfspace}
Let $\FF$ be an ordered field and $\phi \in \FF[\bX] = \FF[X_1,\dots, X_d]$ an affine polynomial.
There are three geometric objects associated with this polynomial: its \emph{hyperplane} $V(\phi)$, its \emph{open halfspace} $\HS(\phi)$ and its \emph{closed halfspace} $\cHS(\phi)$, defined respectively as
\begin{align*}
V(\phi) &:= \{ \bp \in \FF^d \, : \, \phi(\bp) = 0 \} \, , \\
\HS(\phi) &:= \{ \bp \in \FF^d \, : \, \phi(\bp) \in \FF^+\} \, , \\
\cHS(\phi) &:= \{ \bp \in \FF^d \, : \, \phi(\bp) \in \FF^+ \cup \{0\} \} \, . 
\end{align*}
It is immediate from the definition that $\cHS(\phi) = \HS(\phi) \sqcup V(\phi)$.
Each of these objects are convex, and are the building blocks of classical convex geometry.
A key example is the following classical theorem.

\begin{theorem}[Hyperplane Separation Theorem]\label{thm:hyp+sep+thm}
    Let $\conv(T) \subseteq \RR^d$ be a finitely generated convex set and $\bp \notin \conv(T)$.
    There exists a halfspace $\cHS(\phi)$ such that $\conv(T) \subseteq \cHS(\phi)$ and $\bp \notin \cHS(\phi)$.
\end{theorem}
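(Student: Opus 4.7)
The plan is to deploy the classical closest-point argument, which applies directly because $T$ is finite and hence $\conv(T) \subseteq \RR^d$ is compact in the Euclidean topology. As the theorem is stated over the specific ordered field $\RR$, the inner product $\langle\cdot,\cdot\rangle$ and norm $\|\cdot\|$ are at my disposal, and this is really just a sanity check that the $\cHS$-formalism introduced above recovers classical separation.

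First I would consider the continuous function $f \colon \conv(T) \to \RR$ defined by $f(\bq) = \|\bp - \bq\|^2$. Since $\conv(T)$ is the image of the standard simplex in $\RR^{|T|}$ under a linear map, it is compact, so $f$ attains a minimum at some $\bq_0 \in \conv(T)$. Setting $\bn := \bp - \bq_0$, we have $\bn \neq \underline{\0}$ because $\bp \notin \conv(T)$.

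The main step is to establish the first-order optimality condition: for every $\bq \in \conv(T)$, $\langle \bn, \bq - \bq_0 \rangle \leq 0$. If some $\bq$ violated this, convexity would give $\bq_t := (1-t)\bq_0 + t\bq \in \conv(T)$ for all $t \in [0,1]$, and expanding yields
\[
f(\bq_t) = f(\bq_0) - 2t\langle \bn, \bq - \bq_0 \rangle + t^2\|\bq - \bq_0\|^2,
\]
which is strictly less than $f(\bq_0)$ for small $t > 0$, contradicting the minimality of $\bq_0$.

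Finally, setting $c := \langle \bn, \bq_0 \rangle$ and taking the affine polynomial $\phi(\bX) := c \boxplus \bigboxplus_{i=1}^d (-n_i) \odot X_i$, the previous step gives $\phi(\bq) \geq 0$ for all $\bq \in \conv(T)$, hence $\conv(T) \subseteq \cHS(\phi)$, while $\phi(\bp) = c - \langle \bn, \bp \rangle = -\|\bn\|^2 < 0$, so $\bp \notin \cHS(\phi)$. There is no serious obstacle here; the only minor subtlety is choosing the sign convention so that the hyperfield definition of $\cHS(\phi)$ (via $\FF^+ \cup \{\0\}$) matches the classical ``$\leq$'' side of the separating hyperplane, after which the whole argument is a two-line application of compactness and first-order optimality.
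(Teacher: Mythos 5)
Your argument is correct and complete: the closest-point projection argument with the first-order optimality condition is the standard proof of this result, and the final translation into the $\cHS$-formalism (over $\RR$ as a field, $\phi(\bp)\in\FF^+\cup\{0\}$ is exactly $\phi(\bp)\ge 0$) is handled properly. For context, the paper does not prove this theorem itself---it states it as a classical fact and cites Rockafellar---so there is no in-paper proof to compare against; your proof is precisely the textbook argument that reference would supply.
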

There are various generalisations and variants of this theorem: we refer to~\cite{Rockafellar} for further details.
A key corollary is that one can characterise polyhedra as (finite) intersections of closed halfspaces, giving rise to notions such as the face lattice of a polyhedron.
This has led to multiple important results and applications throughout convex geometry and other areas of mathematics.

Given this, halfspaces and hyperplanes are natural candidates for studying convex geometry over hyperfields.
Moreover, we would ideally want an analogous result to Theorem~\ref{thm:hyp+sep+thm} over hyperfields.
However, we shall see many nice properties enjoyed over fields break down in the transition to hyperfields.

\begin{definition}\label{def:halfspace}
Let $\HH$ be an ordered hyperfield and $\phi \in \HH[\bX]$ an affine polynomial.
The associated \emph{hyperplane} $V(\phi)$, \emph{open halfspace} $\HS(\phi)$ and \emph{closed halfspace} $\cHS(\phi)$ are defined respectively as
\begin{align*}
V(\phi)&:= \{ \bp \in \HH^d \, : \, \phi(\bp) \ni \0 \} \, , \\
\HS(\phi)&:= \{ \bp \in \HH^d \, : \, \phi(\bp) \subseteq \HH^+\} \, , \\
\cHS(\phi) &:= \{ \bp \in \HH^d \, : \, \phi(\bp) \cap (\HH^+ \cup \{\0\} ) \neq \emptyset\} \, .
\end{align*}
\end{definition}
These objects were studied in~\cite{LV, JSY} over $\TT\RR$, the latter generalising to semialgebraic sets where one replaces the affine polynomial with an arbitrary one.

\subsection{Properties of open halfspaces}
We begin by studying properties of open halfspaces over arbitrary ordered hyperfields.
\begin{proposition}\label{>conic}
Let $\HH$ be an ordered hyperfield.
For a linear polynomial $\phi\in\HH[X_1, \dots, X_d]$, the open halfspace $\HS(\phi) \subseteq \HH^d$ is a conic set.
\end{proposition}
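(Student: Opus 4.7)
The plan is to unpack the definitions directly and reduce the claim to the two defining closure properties of an ordering: $\HH^+ \odot \HH^+ \subseteq \HH^+$ and $\HH^+ \boxplus \HH^+ \subseteq \HH^+$. Fix $\bp, \bq \in \HS(\phi)$ and $a, b \in \HH^+$. I need to show that every $\br \in a \odot \bp \boxplus b \odot \bq$ satisfies $\phi(\br) \subseteq \HH^+$. Writing $\phi = \bigboxplus_{i=1}^d c_i \odot X_i$ (no constant term since $\phi$ is linear), the coordinates of $\br$ satisfy $r_i \in a \odot p_i \boxplus b \odot q_i$ by definition of hyper-addition on tuples.

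The key computation is to expand $\phi(\br)$ and recognise it as a subset of $a \odot \phi(\bp) \boxplus b \odot \phi(\bq)$. By distributivity of $\odot$ over $\boxplus$, each $c_i \odot r_i$ lies in $c_i \odot (a \odot p_i \boxplus b \odot q_i) = a \odot c_i \odot p_i \boxplus b \odot c_i \odot q_i$. Summing over $i$ and using associativity and commutativity of the hyper-addition to regroup the $a$-terms and $b$-terms, one obtains
\[
\phi(\br) \ = \ \bigboxplus_{i=1}^d c_i \odot r_i \ \subseteq \ \bigboxplus_{i=1}^d \bigl( a \odot c_i \odot p_i \boxplus b \odot c_i \odot q_i \bigr) \ = \ a \odot \phi(\bp) \boxplus b \odot \phi(\bq).
\]
Since $\bp,\bq \in \HS(\phi)$, both $\phi(\bp)$ and $\phi(\bq)$ are subsets of $\HH^+$. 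The two closure axioms of an ordering then give $a \odot \phi(\bp), \, b \odot \phi(\bq) \subseteq \HH^+$, and finally $a \odot \phi(\bp) \boxplus b \odot \phi(\bq) \subseteq \HH^+$. Hence $\phi(\br) \subseteq \HH^+$, i.e.\ $\br \in \HS(\phi)$, completing the proof.

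The only subtle point in the argument is bookkeeping: $\phi(\bp)$ and $\phi(\bq)$ are in general sets rather than single elements, and the inclusion $c_i \odot r_i \in c_i \odot (a \odot p_i \boxplus b \odot q_i)$ is a containment of an element in a set, not an equality. One must be careful that the extension of $\boxplus$ and $\odot$ to subsets (as defined in Section~\ref{sec:background}) remains associative and commutative, and that containment propagates correctly through the chain above. The fact that $\phi$ is linear with no constant term is also essential: any constant term would appear undisturbed in $\phi(\br)$ and could not be absorbed into the expression $a \odot \phi(\bp) \boxplus b \odot \phi(\bq)$, which is exactly why the analogous statement for an affine polynomial requires the weaker notion of a convex, rather than conic, set.
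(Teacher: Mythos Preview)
Your proof is correct and follows essentially the same route as the paper: write $\phi = \bigboxplus_i c_i \odot X_i$, show $\phi(\br) \subseteq a \odot \phi(\bp) \boxplus b \odot \phi(\bq)$ via distributivity and regrouping, then invoke the closure of $\HH^+$ under $\odot$ and $\boxplus$. Your additional remarks on the set-valued bookkeeping and the role of linearity are accurate and match the paper's use of this proposition in deducing the affine case via homogenisation.
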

\begin{proof}
Let $\phi = \bigboxplus_{i=1}^d c_i \odot X_i$ and let $\bp, \bq \in \HS(\phi)$.
Consider $\br \in \lambda \odot \bp \boxplus \mu \odot \bq$ where $\lambda, \mu \in \HH^+$ are arbitrary scalars, suffices to show $\br \in \HS(\phi)$.
\begin{align*}
    \phi(\br) = \bigboxplus_{i=1}^d c_i \odot r_i &\subseteq \bigboxplus_{i=1}^d c_i \odot (\lambda \odot p_i \boxplus \mu \odot q_i) \\
    &= \bigboxplus_{i=1}^d (c_i \odot \lambda \odot p_i) \boxplus  \bigboxplus_{i=1}^d (c_i \odot \mu \odot q_i)\\
    &= \lambda \odot \left(\bigboxplus_{i=1}^d c_i \odot  p_i\right) \boxplus \mu \odot\left(\bigboxplus_{i=1}^d c_i \odot q_i\right) =
   \lambda \odot \phi(\bp) \boxplus \mu \odot \phi(\bq) \, .
\end{align*}
As $\HH^+$ closed under addition and multiplication, and $\phi(\bp), \phi(\bq) \in \HH^+$, this implies $\phi(\br) \in \HH^+$ and so $\br \in \HS(\phi)$.
\end{proof}
As a immediate corollary, we obtain that $\HS(\phi)$ is also convex.

\begin{corollary}\label{lem:aff-HS-conv}
Let $\HH$ be an ordered hyperfield.
For an affine polynomial $\phi\in\HH[X_1, \dots, X_d]$, the open halfspace $\HS(\phi)\subseteq \HH^d$ is a convex set.
\end{corollary}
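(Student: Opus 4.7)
The plan is to reduce the affine case to the linear (hence conic) case already established in Proposition~\ref{>conic} via a homogenisation trick. Write $\phi = \phi' \boxplus c_0$ where $\phi'(\bX) = \bigboxplus_{i=1}^d c_i \odot X_i$ is the linear part and $c_0 \in \HH$ is the constant term. I would introduce an auxiliary variable $X_{d+1}$ and define the linear polynomial
\[
\tilde\phi(X_1, \dots, X_{d+1}) := \phi'(X_1, \dots, X_d) \boxplus c_0 \odot X_{d+1} \in \HH[X_1, \dots, X_{d+1}] \, .
\]
By Proposition~\ref{>conic}, its open halfspace $\HS(\tilde\phi) \subseteq \HH^{d+1}$ is conic.

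The key observation is that $\tilde\phi(\bp, \1) = \phi(\bp)$ for every $\bp \in \HH^d$, so $\bp \in \HS(\phi)$ if and only if $(\bp, \1) \in \HS(\tilde\phi)$. To verify convexity of $\HS(\phi)$, I would pick $\bp, \bq \in \HS(\phi)$ and scalars $\lambda, \mu \in \HH^+$ with $\1 \in \lambda \boxplus \mu$, and consider an arbitrary $\br \in \lambda \odot \bp \boxplus \mu \odot \bq$. Since the last coordinate of $\lambda \odot (\bp, \1) \boxplus \mu \odot (\bq, \1)$ ranges over $\lambda \boxplus \mu$, which contains $\1$, we have $(\br, \1) \in \lambda \odot (\bp, \1) \boxplus \mu \odot (\bq, \1)$. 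The conic property applied to $(\bp, \1), (\bq, \1) \in \HS(\tilde\phi)$ then gives $(\br, \1) \in \HS(\tilde\phi)$, equivalently $\br \in \HS(\phi)$.

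The main subtlety, and the reason I would avoid a direct mimicry of the proof of Proposition~\ref{>conic}, is the treatment of the constant term. Expanding $\phi(\br) = \phi'(\br) \boxplus c_0$ and using hyperdistributivity yields a containment of the form $\phi(\br) \subseteq \lambda \odot \phi'(\bp) \boxplus \mu \odot \phi'(\bq) \boxplus c_0$, and one is tempted to absorb $c_0$ into $\lambda \odot c_0 \boxplus \mu \odot c_0$ using $\1 \in \lambda \boxplus \mu$. However, this only yields $c_0 \in \lambda \odot c_0 \boxplus \mu \odot c_0$, which is a containment in the wrong direction for bounding $\phi(\br)$ above by a subset of $\HH^+$. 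Homogenisation sidesteps this issue entirely, since the extra coordinate $X_{d+1}$ makes the constant term behave like a linear term and absorbs any slack in $\lambda \boxplus \mu$ harmlessly.
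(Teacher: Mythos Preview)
Your proposal is correct and follows essentially the same homogenisation approach as the paper: the paper also passes to the linear polynomial $\tilde\phi$ in one more variable, invokes Proposition~\ref{>conic} to get that $\HS(\tilde\phi)$ is conic, and then deduces convexity of $\HS(\phi)$. The only cosmetic difference is that where you verify the step $(\br,\1)\in\lambda\odot(\bp,\1)\boxplus\mu\odot(\bq,\1)$ by hand, the paper packages this as an appeal to Lemma~\ref{lem:homog} (and the remark following it that slicing a conic set at height $\1$ yields a convex set).
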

\begin{proof}
Let $\phi = c_0\boxplus c_1\odot X_1 \boxplus \cdots c_d \odot X_d$, and define its homogenisation 
\[
\tilde{\phi} = \bigboxplus_{i=0}^d c_i \odot X_i \in \HH[X_0,X_1, \dots, X_d]\, .
\]
Note that $\phi(\bp) \subseteq \HH^+$ if and only if $\tilde{\phi}((\bp, \1)) \subseteq \HH^+$, or equivalently that $\bp \in \HS(\phi) \subseteq \HH^d$ if and only if $(\bp, \1) \in \HS(\tilde{\phi}) \subseteq \HH^{d+1}$.
By Proposition~\ref{>conic}, $\HS(\tilde{\phi})$ is a conic set, and so applying Lemma~\ref{lem:homog} implies $\HS(\phi)$ is a convex set.
\end{proof}

\begin{proposition}\label{prop:conv+in+open}
Let $\HH$ be an ordered hyperfield, and $T \subseteq \HH^d$.
Then $\conv(T)$ is contained in the intersection of all open halfspaces containing $T$, i.e.
\[
\conv(T) \subseteq \bigcap_{T\subseteq \HS(\phi)} \HS(\phi) \, .
\]
\end{proposition}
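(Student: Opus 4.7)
The plan is to reduce the statement to the minimality of the convex hull together with the convexity of open halfspaces, both of which have already been established.

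First I would fix an open halfspace $\HS(\phi)$ with $T \subseteq \HS(\phi)$, and argue that $\conv(T) \subseteq \HS(\phi)$. By Corollary~\ref{lem:aff-HS-conv}, the open halfspace $\HS(\phi)$ associated to an affine polynomial $\phi \in \HH[\bX]$ is itself a convex subset of $\HH^d$. Since $\conv(T)$ is the minimal convex set containing $T$, and $\HS(\phi)$ is a convex set containing $T$, we immediately get $\conv(T) \subseteq \HS(\phi)$.

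Intersecting over all affine polynomials $\phi \in \HH[\bX]$ with $T \subseteq \HS(\phi)$ then yields
\[
\conv(T) \;\subseteq\; \bigcap_{T \subseteq \HS(\phi)} \HS(\phi),
\]
which is the desired inclusion. (If no such $\phi$ exists, the intersection is $\HH^d$ by convention and the statement is vacuous.)

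There is essentially no obstacle here: the content of the proposition lies entirely in Corollary~\ref{lem:aff-HS-conv}, and the rest is a direct application of the universal property of the convex hull. The real interest of the statement is that the reverse inclusion generally fails, as foreshadowed in the introduction and illustrated by Example~\ref{ex:S-non-open-sep}; this proposition merely records the one containment that does survive the passage from fields to hyperfields.
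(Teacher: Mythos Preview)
Your proof is correct and follows essentially the same approach as the paper: both use Corollary~\ref{lem:aff-HS-conv} to get convexity of each $\HS(\phi)$ and then invoke the minimality of $\conv(T)$. The only cosmetic difference is that the paper first takes the intersection and appeals to Proposition~\ref{prop:intconv-conv} to say it is convex, whereas you apply minimality to each $\HS(\phi)$ separately and then intersect; your version is marginally more direct.
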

\begin{proof}
Corollary~\ref{lem:aff-HS-conv} implies each such $\HS(\phi)$ is a convex set containing $T$. 
By Proposition \ref{prop:intconv-conv}, the intersection of these halfspaces is convex. 
By definition, $\conv(T)$ is the smallest convex set containing $T$, hence giving the inclusion.
\end{proof}
The converse of this result holds over $\RR$, and so we get equality.
However, even over ordered fields the converse is not guaranteed if $T$ is infinite~\cite{RR:91}.

If we restrict to finitely generated convex sets, then the converse of this result is the (open) Hyperplane Separation Theorem: for any $\bp \in \HH^d$, either $\bp \in \conv(T)$ or there exists an open halfspace separating $\bp$ from $\conv(T)$.
This result has been proved when $\HH$ is an ordered field~\cite{Cernikov} and when $\HH = \TT\RR$~\cite[Theorem 5.1]{LV}.
A natural question is whether this holds for all ordered hyperfields.
The following example shows this is not true.
\begin{example}\label{ex:S-non-open-sep}
    Take the convex set $$T = \big \{ (-\1,\1),(\0,\0),(\0,\1),(\1,\0) ,(\1,\1) \big \} \subseteq \SS^2$$ and the point $(-\1,\0) \notin T$, as seen in Figure~\ref{Fig:S+non+open+sep}. The only open halfspace which contains $T$ is defined by $\phi = X_2 \boxplus \1$. Although, the point $(-\1,\0) \notin T$ also belongs to this open halfspace, i.e. $\phi(-\1,\0) = \0 \boxplus \1 = \{\1\} = \SS^+$.
    \begin{figure}
        \centering
        \includegraphics[scale=1.3]{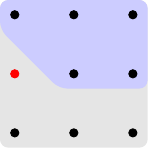}
        \caption{The convex set $T \subseteq \SS^2$ and the red point $(-\1,\0)$ which can not be separated by an open halfspace, as in Example~\ref{ex:S-non-open-sep}}
        \label{Fig:S+non+open+sep}
    \end{figure}
\end{example}
Classifying hyperfields for which the (open) hyperplane separation theorem holds appears to be difficult in general;
some results towards this will be given in Section~\ref{sec:farkas}.
We also note that separation via closed halfspaces does hold over $\SS$, but this is even harder to classify for general hyperfields; we discuss this as possible further work in Section~\ref{sec:exterior}.

We end this subsection by showing that if $\HH = \FF/U$ is a quotient hyperfield, we can describe $\HS(\phi)$ in terms of open halfspaces in $\FF$.

\begin{theorem}\label{thm:quotient+open}
    Let $\HH = \FF/U$ be an ordered quotient hyperfield with quotient map $\tau$.
    For any affine polynomial $\phi \in \FF[\bX]$, the following holds
    \[
    \HS(\tau_*(\phi)) = \tau\left(\bigcap_{\tau_*(\phi) = \tau_*(\psi)} \HS(\psi)\right) \, .
    \]
\end{theorem}
\begin{proof}
    Assume that $\tau(\bp) \in \HS(\tau_*(\phi))$, i.e. $\tau_*(\phi)(\tau(\bp)) \subseteq \HH^+$.
    Recall that $\tau(\phi(\bp)) \subseteq \tau_*(\phi)(\tau(\bp))$ for all $\bp \in \FF^d$ by hyperfield homomorphism properties.
    Therefore, we get that $\tau(\phi(\bp)) \in \HH^+$ and so $\bp \in \HS(\phi)$ by the order preserving property of $\tau$.
    Moreover, this holds for any $\psi \in \FF[\bX]$ such that $\tau_*(\psi) = \tau_*(\phi)$, giving the containment $\subseteq$.
    
    For the opposite containment, assume that $\tau(\bp) \notin \HS(\tau_*(\phi))$.
    This implies there exists some element $a \in \tau_*(\phi)(\tau(\bp)) \cap (\HH^- \cup \{\0\})$.
    Letting $\phi = c_0 + c_1X_1 + \cdots + c_dX_d$, we note that the quotient structure of $\HH$ implies there exists $\tilde{a} \in \tau^{-1}(a)$ and $u_i \in U$ such that
    \[
    c_0\cdot u_0 + \sum_{i=1}^d c_i\cdot p_i \cdot u_i = \tilde{a} \in \FF^- \cup \{0\} \, .
    \]
    Letting $\psi = c_0 u_0 + \sum c_i u_i X_i$, we observe that $\bp \notin \HS(\psi)$ and that $\tau_*(\psi) = \tau_*(\phi)$.
\end{proof}
\begin{example}\label{ex:sgn-open-HS-notIncld}
We consider the sign hyperfield as the quotient hyperfield $\SS = \RR/{\RR_{>0}}$ with quotient map $\tau\colon\RR\rightarrow\SS$.
Consider the polynomial $\phi = X_1 + X_2 \in \RR[X_1,X_2]$, the set $\HS(\phi)$ is the open halfspace in Figure~\ref{openHSnotinclusion}.
Its image in the quotient map is 
\[
\tau(\HS(\phi)) = \{(-\1,\1),(\0,\1), (\1,\1), (\1,\0), (\1,-\1)\} \subseteq \SS^2 \, .
\]
Note that this is not a convex set, as Example~\ref{ex:conv-not-pf} demonstrates.
However, the push-forward of polynomial $\tau_*(\phi) = X_1 \boxplus X_2 \in \SS[X_1,X_2]$ defines the open halfspace 
\[
\HS(\tau_*(\phi)) = \{(\0,\1), (\1,\1), (\1,\0)\} \, ,
\]
which is only a subset of $\tau(\HS(\phi))$.
One can verify this is a conic (and therefore convex) set.
\begin{figure}
    \centering
        \includegraphics{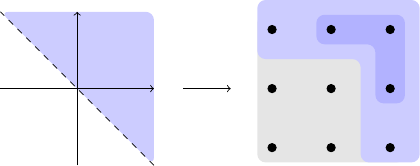}
    \caption{The open halfspace $\HS(X_1+X_2) \subseteq \RR^2$ and its image $\tau(\HS(X_1+X_2)) \subseteq \SS^2$.
    The open halfspace $\HS(X_1 \boxplus X_2) \in \SS^2$ is denoted in a darker blue, and a strict subset of $\tau(\HS(X_1+X_2))$.}
    \label{openHSnotinclusion}
\end{figure}

To obtain the characterisation in Theorem~\ref{thm:quotient+open}, consider any point $(a,b) \in \tau^{-1}((-\1,\1))$.
The polynomial $\psi = bX_1 - aX_2$ satisfies $(a,b) \notin \HS(\psi)$, and moreover $\tau_*(\psi) = \tau_*(\phi)$.
An identical condition holds for $(\1,-\1)$: for all points in the preimage there exists some halfspace that does not contain it whose polynomial pushes forward to $\tau_*(\phi)$.
Figure~\ref{fig:open-HS-pf-int} gives a schematic viewpoint of this.
\begin{figure}
    \centering
    \includegraphics{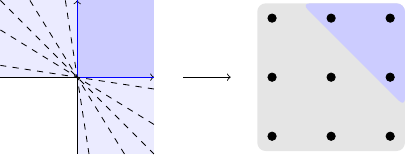}
    \caption{Demonstrating the intersection of open halfspaces, as in Theorem~\ref{thm:quotient+open}.}
    \label{fig:open-HS-pf-int}
\end{figure}
\end{example}

\subsection{Properties of closed halfspaces}
The properties of closed halfspaces are far less well-behaved over general hyperfields than open halfspaces.
Unlike over $\FF$, they are not convex and do not decompose into $V(\phi) \sqcup \HS(\phi)$ in general.
However, they do behave well with the quotient structure.
We first show that closed halfspaces over quotient hyperfields can be described in terms of closed halfspaces in $\FF$, analogously to Theorem~\ref{thm:quotient+open}.
\begin{theorem}\label{clsd_hs_eql}
Let $\HH=\FF/U$ be an ordered quotient hyperfield with quotient map $\tau\colon\FF\rightarrow \HH$.
For any affine polynomial $\phi \in \FF[\bX]$, the following holds
\[
\tau(\cHS(\phi)) = \cHS(\tau_*(\phi)) \, .
\]
\end{theorem}
\begin{proof}
For the inclusion $\tau(\cHS(\phi)) \subseteq \cHS(\tau_*(\phi))$, take $\bp \in \cHS(\phi)$.
By definition, one has $\phi(\bp) \in \FF^+ \cup \{0\}$, and so $\tau(\phi(\bp)) \in \HH^+ \cup \{\0\}$.
By hyperfield homomorphism properties~\eqref{eq:pushforward+hom}, we have 
\[
\tau(\phi(\bp)) \subseteq \tau_*(\phi)(\tau(\bp)) \, ,
\]
therefore $\tau_*(\phi)(\tau(\bp)) \cap (\HH^+ \cup \{\0\})$ is non-empty.
We conclude that $\tau(\bp) \in \cHS(\tau_*(\phi))$.

For the opposite inclusion, take $\tau(\bp) \in \cHS(\tau_*(\phi))$, implying there exists $a \in \tau_*(\phi)(\tau(\bp))\cap (\HH^+ \cup\{\0\})$.
Letting $\phi = c_0 + c_1X_1 + \cdots + c_dX_d$, we note that the quotient structure of $\HH$ implies there exists $\tilde{a} \in \tau^{-1}(a)$ and $u_i \in U$ such that
\[
    c_0\cdot u_0 + \sum_{i=1}^d c_i\cdot p_i \cdot u_i = \tilde{a} \in \FF^+ \cup \{0\} \, .
\]
Moreover, we can multiply through by $u_0^{-1} \in U \subseteq \FF^+$ to obtain
\[
c_0 + \sum_{i=1}^d c_i\cdot p_i \cdot u_iu_0^{-1} = \tilde{a}u_0^{-1} \in \FF^+ \cup \{0\} \, .
\]
Letting $\bq = (p_1\cdot u_1u_0^{-1}, \dots, p_d\cdot u_1u_0^{-1})$, we see that $\phi(\bq) \in \FF^+ \cup \{0\}$, and moreover that $\tau(\bq) = \tau(\bp)$.
This gives the opposite inclusion.
\end{proof}
Note that unlike open halfspaces, an immediate corollary of this result is that $\tau(\cHS(\phi)) = \tau(\cHS(\psi))$ whenever $\tau_*(\phi) = \tau_*(\psi)$.

It is immediate from Definition~\ref{def:halfspace} that the closed halfspace $\cHS(\phi)$ contains the open halfspace $\HS(\phi)$ and hyperplane $V(\phi)$ for any ordered hyperfield $\HH$.
Moreover, the open halfspace and hyperplane are disjoint.
From classical convex geometry, we would expect the closed halfspace to be exactly equal to $\cHS(\phi) = \HS(\phi) \sqcup V(\phi)$.
While this is not true in general for hyperfields, it is true for stringent hyperfields.
\begin{proposition}
    Let $\HH$ be an ordered stringent hyperfield.
    For any affine polynomial $\phi \in \HH[\bX]$, the closed halfspace $\cHS(\phi)$ decomposes into
    \[
    \cHS(\phi) = \HS(\phi) \sqcup V(\phi) \, .
    \]
\end{proposition}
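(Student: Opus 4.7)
The statement decomposes into three claims: the disjointness $\HS(\phi) \cap V(\phi) = \emptyset$, the containment $\HS(\phi) \cup V(\phi) \subseteq \cHS(\phi)$, and the reverse containment $\cHS(\phi) \subseteq \HS(\phi) \cup V(\phi)$. Disjointness is immediate, since any $\bp \in \HS(\phi)$ has $\phi(\bp) \subseteq \HH^+$ and $\0 \notin \HH^+$. The first containment is immediate from the definition of $\cHS(\phi)$: if $\phi(\bp) \subseteq \HH^+$ or $\0 \in \phi(\bp)$, then $\phi(\bp) \cap (\HH^+ \cup \{\0\}) \neq \emptyset$.

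The substantive content is the reverse containment. The plan is to first establish a key lemma about stringent hyperfields: for any finite sequence $a_1, \ldots, a_k \in \HH$, the iterated hyper-sum $\bigboxplus_{i=1}^k a_i$ is either a singleton or a set containing $\0$. Stringency gives this for $k = 2$ (a two-term sum is a non-singleton iff the summands are inverses, in which case the result contains $\0$), and the statement extends to arbitrary $k$ by induction. The cleanest inductive argument invokes Proposition~\ref{prop:ord+stringent+class}: since $\HH \cong \SS \rtimes G$ or $\HH \cong \FF \rtimes G$, a direct case analysis on the semidirect-product addition formula shows that any iterated sum is governed by its top-value terms — producing a single element $(\sigma, g^*)$ when those top terms sum (in $\SS$ or $\FF$) to a nonzero $\sigma$, and otherwise producing a ``cancellation set'' of the form $\{(c, g') : c \in \HH^\times, g' \leq g^*\} \cup \{\0\}$, which contains $\0$.

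Given the lemma, the remaining containment is immediate. Writing $\phi(\bp) = c_0 \boxplus c_1 \odot p_1 \boxplus \cdots \boxplus c_d \odot p_d$, the lemma says $\phi(\bp)$ is either a singleton $\{a\}$ or contains $\0$. If $\bp \in \cHS(\phi)$, there exists $a \in \phi(\bp) \cap (\HH^+ \cup \{\0\})$. In the singleton case either $a \in \HH^+$, giving $\phi(\bp) \subseteq \HH^+$ and $\bp \in \HS(\phi)$, or $a = \0$, giving $\bp \in V(\phi)$. In the non-singleton case $\0 \in \phi(\bp)$, so $\bp \in V(\phi)$.

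The main obstacle is isolating the lemma at the right level of generality. A direct induction purely from stringency stumbles on the case where an inner partial sum $\bigboxplus_{i=1}^{k-1} a_i$ already contains $\0$ but is not equal to $\{\0\}$: one then needs to argue that adjoining $a_k$ either collapses the whole sum to $\{a_k\}$ (when $a_k$ ``dominates'' every nonzero element of the partial sum) or preserves a cancellation set that still contains $\0$. This bookkeeping is exactly what the extension structure of Proposition~\ref{prop:ord+stringent+class} packages cleanly, which is why I would route the proof through the classification rather than attempting an axiomatic argument.
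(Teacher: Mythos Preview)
Your proposal is correct and follows essentially the same route as the paper. The only difference is that the paper cites the key lemma---that any finite hyper-sum in a stringent hyperfield is either a singleton or contains $\0$---as \cite[Lemma~39]{BP} (it appears later in the paper as Lemma~\ref{lem:realisable+sets}), whereas you propose to reprove it via the classification of Proposition~\ref{prop:ord+stringent+class}; both arguments ultimately rest on the Bowler--Su structure theorem.
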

\begin{proof}
Let $\bp \in \cHS(\phi)$, then $\phi(\bp) \cap (\HH^+ \cup \{\0\}) \neq \emptyset$.
By~\cite[Lemma 39]{BP}, $\phi(\bp)$ is a singleton unless it contains zero.
If $\phi(\bp)$ is a singleton then in must be contained in $\HH^+ \cup \{\0\}$ and hence $\bp$ is an element of $\HS(\phi) \sqcup V(\phi)$.
If $\phi(\bp)$ contains zero, then $\bp$ is an element of $V(\phi)$.
\end{proof}

When $\HH$ is not stringent, we can have polynomials $\phi$ such that an evaluation $\phi(\bp)$ may be a set containing positive and negative elements, but not zero.
This ensures $\bp \in \cHS(\phi)$ while not being contained in either $\HS(\phi)$ or $V(\phi)$.
The following example demonstrates exactly this behaviour.

\begin{example} \label{ex:Non-stringent-quo-halfspaces}
Recall $\HH = \RR((t))/S = \{\0, \1, -\1, \bar{t}, -\bar{t}\}$, the non-stringent ordered quotient hyperfield from Example~\ref{ex:finite+ordered}. 
Consider the polynomial $\phi = X_1 \boxplus X_2 \in \HH[X_1,X_2]$; a visualisation of $V(\phi), \HS(\phi)$ and $\cHS(\phi)$ is given in Figure~\ref{Fig:Non-stringent-quo-halspaces}.
In particular, we observe that there exist points in $\cHS(\phi)$ that are not in $\HS(\phi)$ or $V(\phi)$.
To see why, consider one such point $(\bar{t},-\1) \in \HH^2$ along with two points in the preimage $(t,-t^2), (t^3,-t^2) \in \tau^{-1}(\bar{t},-\1)$.
When evaluating these points in the pull-back $\tilde{\phi} = X_1 + X_2$ of $\phi$, we observe that
\begin{align*}
    &\tilde{\phi}(t,-t^2) = t - t^2 \in \FF^+ & &\tau(\tilde{\phi}(t,-t^2)) = \bar{t} \\
    &\tilde{\phi}(t^3,-t^2) = -t^2 +t^3 \notin \FF^+ & &\tau(\tilde{\phi}(t^3,-t^2)) = -\1 
\end{align*}
The first point is contained in $\HS(\tilde{\phi})$ while the second isn't.
Furthermore, there is no preimage $\bp \in \tau^{-1}(\bar{t},-\1)$ such that $\bp \in V(\tilde{\phi})$.
\begin{figure}
    \centering
    \includegraphics{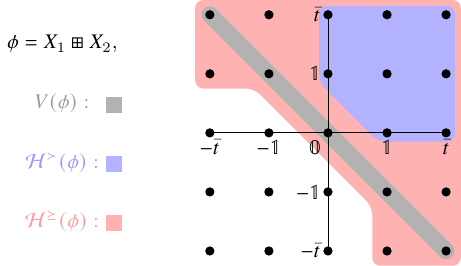}
    \caption{The open halfspace (blue), closed halfspace (red) and the hyperplane (grey) for $\phi = X_1 \boxplus X_2$ as in Example~\ref{ex:Non-stringent-quo-halfspaces}.}
    \label{Fig:Non-stringent-quo-halspaces}
\end{figure}
\end{example}

We end this section by noting that unlike open halfspaces, closed halfspaces are generally not convex, even over stringent hyperfields.

\begin{example}\label{ex:closed+not+convex}
Following Example~\ref{ex:sgn-open-HS-notIncld}, we consider the sign hyperfield as the quotient hyperfield $\SS = \RR/{\RR_{>0}}$, and the polynomial $\phi = X_1 + X_2 \in \SS[X_1,X_2]$.
The closed halfspace decomposes in
\[
\cHS(\phi) = \underbrace{\{(\0,\1), (\1,\1), (\1,\0)\}}_{\HS(\phi)} \sqcup \underbrace{\{(-\1,\1),(\0,\0), (\1,-\1)\}}_{V(\phi)} \subseteq \SS^2
\]
However, this set is not convex: we have seen that the convex hull of $(-\1,\1)$ and $(\1,-\1)$ is $\SS^2$.
Moreover, this also implies the hyperplane $V(\phi)$ is not convex either.
\end{example}

\begin{remark}
Polyhedra in $\RR$ are characterised as the intersection of finitely many closed halfspaces.
Defining polyhedra over a hyperfield this way would imply that polyhedra are generally not convex sets, marking a big departure from classical convexity theory.
Motivated by this, \cite{LS} introduced two separate notions of convexity over the signed tropical hyperfield: tropical open (TO)-convexity and tropical closed (TC)-convexity.
TO-convex sets are the intersection of open halfspaces, and coincides with the definition in this paper, while TC-convex sets are the intersection of closed halfspaces.
It has been shown that TO-convex sets are also TC-convex, and so this is a strictly more general definition.
However, this comes at the expense of some desirable properties that one may want from convex sets: for example, TC-convex sets do not have to be connected.
It would be interesting to study general `hyperfield closed' (HC)-convexity and see how it differs from TC-convexity.
\end{remark}

\section{Hemispace separation and the Kakutani property}\label{sec:kakutani}

As Example~\ref{ex:S-non-open-sep} demonstrates, the Hyperplane Separation Theorem does not hold in general over ordered hyperfields.
In this section we consider a different flavour of separation that occurs more naturally for hyperfields.
Over ordered fields, both open and closed halfspaces are convex sets whose complement is convex, but this is not the case over hyperfields.
We therefore consider a different object for separation, a \emph{hemispace}.

\begin{definition}
    A \emph{hemispace} is a convex set whose complement is also convex.
    Precisely, $X \subseteq \HH^d$ is a hemispace if $X$ and $X^c := \HH^d\setminus X$ are convex. 
\end{definition}

It is easy to see that all halfspaces are hemispaces over $\FF^d$.
However, even over $\RR$ the converse is not true as Example~\ref{ex:U+invariant+hemispace} demonstrates: see~\cite{MLas} for a systematic study of hemispaces over Euclidean vector spaces.
A similar phenomenon occurs in the max-plus semiring setting, where all tropical halfspaces are also tropical hemispaces~\cite{BH}, but the converse is not true: we refer to~\cite{KNS:14,EHN:16} for a complete characterisation of tropical hemispaces.
This is not the case for halfspaces over $\TT\RR$ or other ordered hyperfields; 
as discussed in~\cite{LS} and Section~\ref{sec:halfspace}, closed halfspaces and complements of open halfspaces may not be convex.
As such, hemispaces are the more natural and well-behaved object for convex geometry in this setting. 

\begin{example}
Figure~\ref{fig:three-hemi-examples} gives three examples of hemispaces in $\SS^2$.
 Figure \ref{fig:hemi-half} depicts complementary hemispaces that are also halfspaces: $\HS(-X_2)$ is precisely the blue region.
 Figure \ref{fig:hemi-not-half} presents complementary hemispaces which cannot be constructed as halfspaces.
 Finally, Figure \ref{fig:half-not-hemi} describes a halfspace and its complement which are not complementary hemispaces.
 Explicitly, the green halfspace contains both $(-\1,\1)$ and $(\1,-\1)$. 
 As $(-\1,\1) \boxplus (\1,-\1) = \SS^2$ is not contained in the green halfspace, it cannot be convex.

\begin{figure}[ht]
     \centering
     \begin{subfigure}[b]{0.25\textwidth}
         \centering
         \includegraphics[width=\textwidth]{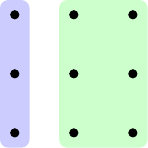}
         \caption{Complementary hemispaces which are halfspaces}
         \label{fig:hemi-half}
     \end{subfigure}
     \hfill
     \begin{subfigure}[b]{0.25\textwidth}
         \centering
         \includegraphics[width=\textwidth]{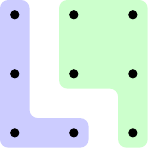}
         \caption{Complementary hemispaces but not halfspaces}
         \label{fig:hemi-not-half}
     \end{subfigure}
     \hfill
     \begin{subfigure}[b]{0.25\textwidth}
         \centering
         \includegraphics[width=\textwidth]{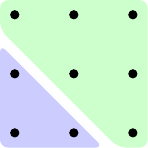}
         \caption{Halfspaces but not hemispaces}
         \label{fig:half-not-hemi}
     \end{subfigure}
        \caption{Examples of hemispaces and halfspaces in $\SS^2$.}
        \label{fig:three-hemi-examples}
\end{figure}
\end{example}

We now give our main theorem of this section: any two disjoint convex sets can be separated by hemispaces over quotient hyperfields. This generalises~\cite[Theorem 3.3]{LS} surrounding hemispace separation in signed tropical convexity.
\begin{theorem}(Kakutani Property)\label{thm:kakutani}
    Let $\HH = \FF/U$ be an ordered quotient hyperfield.
    If $A,B \subseteq \HH^d$ are two disjoint convex sets, then there exists a hemispace $X \subseteq \HH^d$ such that $A \subseteq X$ and $B \subseteq X^c$.
\end{theorem}

\begin{example}
The Kakutani property is depicted in two cases over $\SS^2$ in Figures \ref{fig:kak-ex-1} and \ref{fig:kak-ex-2}. The grey regions indicate the two disjoint convex sets and the green and blue regions are the complementary hemispaces used to separate.
\begin{figure}[ht]
    \centering
    \begin{subfigure}[b]{0.45\textwidth}
         \centering
         \includegraphics[width=0.6\textwidth]{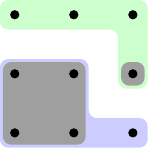}
         \caption{}
         \label{fig:kak-ex-1}
     \end{subfigure}
     \hfill
     \begin{subfigure}[b]{0.45\textwidth}
         \centering
         \includegraphics[width=0.6\textwidth]{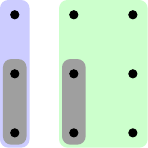}
         \caption{}
         \label{fig:kak-ex-2}
     \end{subfigure}
    \caption{Examples of the Kakutani property over $\SS^2$.}
    \label{fig:kak-exs}
\end{figure}
\end{example}

The proof of Theorem~\ref{thm:kakutani} takes a similar approach to the analogous statement over $\TT\RR$ in~\cite{LS}.
It is reliant on the fact that the Kakutani property is equivalent to the \emph{Pasch property} for particular types of convexity.
We prove this by utilising the Pasch property over ordered fields and pushing it forward to the quotient hyperfield.

\begin{proposition}\label{prop:pasch}(Pasch Property)
    Let $\HH = \FF/U$ be an ordered quotient hyperfield.
    For all $\br, \bq_1 , \bq_2 \in \HH^d$ and $\bp_1 \in \conv(\br,\bq_1)$ and $ \bp_2 \in \conv(\br, \bq_2)$, we have
    \[
    \conv(\bq_1 , \bp_2) \cap \conv(\bq_2 , \bp_1) \neq \emptyset.
    \]
\end{proposition}
\begin{proof}
    By~\cite[I. Proposition 4.14.1]{VDV}, the Pasch property holds over all ordered fields.
    As such, we proceed by constructing lifts of the points $\br, \bq_1 , \bq_2, \bp_1, \bp_2$ in $\FF^d$ and utilising the Pasch property there.
    We let $\widetilde{\bx} \in \FF^d$ denote an element of the preimage $\tau^{-1}(\bx)$ for $\bx \in \HH^d$.
    
    By Theorem~\ref{thm:quotient+structure}, there exist elements of $\FF^d$ such that
    \[
    \widetilde{\bp}_1 \in \conv(\widetilde{\br}_1,\widetilde{\bq}_1) \quad , \quad \widetilde{\bp}_2 \in \conv(\widetilde{\br}_2,\widetilde{\bq}_2) \, .
    \]
    In particular, $\widetilde{\br}_1,\widetilde{\br}_2 \in \tau^{-1}(\br)$ but are possibly distinct; to utilise the Pasch property, they must be the same.
    As they are both in the preimage of $\br$, there exists $\bu \in U^d$ such that 
    \[
    \widetilde{\br}_1 = \bu * \widetilde{\br}_2 := (u_1\cdot \widetilde{r}_{21}, \dots, u_d\cdot \widetilde{r}_{2d}) \, .
    \]
    Applying $\bu$ to $\widetilde{\bp}_2$ and $\widetilde{\bq}_2$ implies that $\bu*\widetilde{\bp}_2 \in \conv(\widetilde{\br}_1, \bu*\widetilde{\bp}_2)$.
    Furthermore applying $\bu$ does not change their image under $\tau$, as $\bu*\widetilde{\bp}_2 \in \tau^{-1}(\bp_2)$ and $\bu*\widetilde{\bq}_2 \in \tau^{-1}(\bq_2)$.

    We can now apply the Pasch property over $\FF^d$ to show there exists some $\bw \in \FF^d$ such that
    \[
    \bw \in \conv(\widetilde{\bq}_1,\bu*\widetilde{\bp}_2) \cap \conv(\bu*\widetilde{\bq}_2,\widetilde{\bp}_1) \, .
    \]
    Applying Theorem~\ref{thm:quotient+structure} implies the Pasch property holds over $\HH$:
  \begin{align*}
     \tau(\bw) &\in \tau(\conv(\widetilde{\bq}_1,\bu*\widetilde{\bp}_2)) \cap \tau(\conv(\bu*\widetilde{\bq}_2,\widetilde{\bp}_1)) \\
     &\subseteq \conv(\tau(\widetilde{\bq}_1),\tau(\bu*\widetilde{\bp}_2)) \cap \conv(\tau(\bu*\widetilde{\bq}_2),\tau(\widetilde{\bp}_1))\\
     &= \conv(\bq_1 , \bp_2) \cap \conv(\bq_2 , \bp_1) \, .
\end{align*}
    
\end{proof}

\begin{example}
Figure~\ref{fig:pasch-ex} depicts two examples of the Pasch property.
The first is a schematic of how to view the Pasch property in Euclidean space.

The second is an example of the Pasch property over $\SS^2$.
The dark and light blue regions on the left hand side depict $\conv(\br,\bq_1)$ and $\conv(\br,\bq_2)$ respectively.
On the right hand side the light green and olive regions depict $\conv(\bp_1,\bq_2)$ and $\conv(\bp_2,\bq_1)$ respectively. The point $\bw = (1,0)$ illustrates that $\conv(\bp_1,\bq_2) \cap \conv(\bp_2,\bq_1)$ is non-empty.
\begin{figure}[ht]
    \centering
    \includegraphics[width=0.3\textwidth]{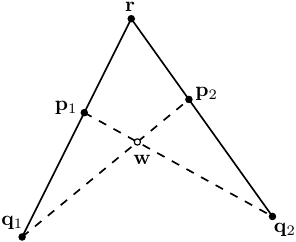} \hfill
    \includegraphics[width=0.55\textwidth]{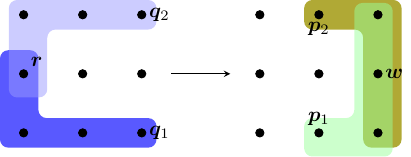}
    \caption{Two examples of the Pasch property.
    The left shows an example in Euclidean space, the right shows an example over $\SS^2$.}
    \label{fig:pasch-ex}
\end{figure}
\end{example}


\begin{proof}[Proof of Theorem~\ref{thm:kakutani}]
\added{Let $\cC$ be the collection of all convex subsets of $\HH^d$, including the empty set as the trivial convex set. 
We show that this forms an abstract \emph{convexity} in the sense of~\cite{VanDeVel}, as it satisfies the following three conditions.
Firstly, $\emptyset$ and $\HH^d$ are contained in $\cC$.
Secondly, $\cC$ is closed under arbitrary intersections by Proposition~\ref{prop:intconv-conv}.
Finally, given a chain $S_0 \subseteq S_1 \subseteq S_2 \subseteq \cdots$ of convex sets, the union $S = \bigcup_{i \geq 0} S_i$ is also convex.
This follows as for any $\bp, \bq \in S$, there exists some $k \geq 0$ such that $\bp, \bq \in S_k$, hence $\conv(\bp,\bq) \subseteq S_k \subseteq S$.

We next note that $\cC$ is a 2-ary convexity as defined in~\cite[Section 1.4]{VanDeVel}.
Explicitly, a convexity is 2-ary if $S \in \cC$ whenever $\conv(\bp,\bq) \subseteq S$ for all $\bp,\bq \in S$, which is precisely our definition of a convex set $\HH^d$.}
By \cite[Theorem 5]{Chepoi}, the Pasch property is equivalent to the Kakutani property for 2-ary convexities.
As Proposition~\ref{prop:pasch} holds for all quotient hyperfields, so does the Kakutani property.
\end{proof}

We deduce the following as an immediate corollary of Theorem~\ref{thm:kakutani}.
\begin{corollary}
    Let $\HH = \FF/U$ be a quotient hyperfield.
    Any convex set $S \subseteq \HH^d$ can be written as the intersection of the hemispaces containing it, i.e.
    \[
    S = \bigcap_{\substack{X \supseteq S \\ X \text{hemispace}}} X \, .
    \]
\end{corollary}
\begin{proof}
    A point $\bp \in \HH^d$ is always a convex set.
    Therefore given any point $\bp \notin S$, Theorem~\ref{thm:kakutani} implies there exists a hemispace $X$ such that $S \subseteq X$ and $\bp \notin X$.
\end{proof}

We conclude this section by examining the structure of hemispaces over quotient hyperfields, and connecting them to hemispaces over ordered fields.
Given $U \subseteq \FF^\times$, we say a set $Y \subseteq \FF^d$ is \emph{$U$-invariant} if
\begin{align*}
    \bp \in Y \quad \text{ if and only if } \quad \bu * \bp := (u_1\cdot p_1, \dots, u_d\cdot p_d) \in Y \quad , \quad \text{ for all } \bu \in U^d \, .
\end{align*}
Note that if $Y$ is a $U$-invariant hemispace, its complement $Y^c = \FF^d \setminus Y$ is also $U$-invariant as $\bp \notin Y$ if and only if $\bu * \bp \notin Y$ for all $\bu \in U^d$.

\begin{proposition}
    Let $\HH = \FF/U$ be a quotient hyperfield.
    Then $X \subseteq \HH^d$ is a hemispace if and only if $\tau^{-1}(X) \subseteq \FF^d$ is a $U$-invariant hemispace.
\end{proposition}
\begin{proof}
    Given a hemispace $X \subseteq \HH^d$ and its complement $X^c = \HH^d \setminus X$, Lemma~\ref{lem:pre-conv-conv} implies the preimages $Y = \tau^{-1}(X)$ and $Y^c = \tau^{-1}(X^c)$ are complementary hemispaces of $\FF^d$.
    Moreover, they are $U$-invariant as $\overline{\bp} = \overline{\bu*\bp}$ by the quotient structure.

    Conversely, let $Y$ and $Y^c$ be complementary $U$-invariant hemispaces of $\FF^d$.
    Let $\overline{\bp}, \overline{\bq} \in \tau(Y)$ and $\overline{a}, \overline{b} \in \HH^+$ such that $\1 \in \overline{a} \boxplus \overline{b}$.
    For any $\overline{\br} \in \overline{a} \odot \overline{\bp} \boxplus \overline{b} \odot \overline{\bq}$, there exists $\bu, \bv \in U^d$ such that $\br = a\cdot(\bu * \bp) + b\cdot (\bv * \bq)$.
    As $Y$ is convex and $U$-invariant, to follows that $\br \in Y$.
    As such, $\overline{\br} \in \tau(Y)$ and so $\tau(Y)$ is convex; an identical proof holds for $\tau(Y^c)$.
    Finally, suppose that $\overline{\br} \in \tau(Y) \cap \tau(Y^c)$, then there exists $\br, \br' \in \tau^{-1}(\overline{\br})$ such that $\br \in Y$ and $\br' \in Y^c$.
    As $\br, \br'$ have the same image in $\tau$, there exists $\bu \in U^d$ such that $\br' = \bu * \br$.
    As $Y$ is $U$-invariant, this implies $\br' \in Y \cap Y^c$, a contradiction to $Y$ and $Y^c$ being complementary hemispaces.
    This shows $\tau(Y)$ and $\tau(Y^c)$ are complementary hemispaces.
\end{proof}

\begin{example}\label{ex:U+invariant+hemispace}
    We return to our running example of $\SS^2$ considered as the quotient hyperfield $\SS = \RR/\RR_{>0}$.
    Consider the $\RR_{>0}$-invariant hemispace
    \[
    Y = \{(x_1,x_2) \in \RR^2 \mid x_1 > 0 \text{ or } x_1 = 0 , x_2 \geq 0\} \subseteq \RR^2 \, .
    \]
    Its image under the quotient map is the hemispace $\tau(Y) \subseteq \SS^2$, as depicted in Figure~\ref{fig:pf-hemispaces}.
    \begin{figure}[ht]
        \centering
        \includegraphics[width=0.6\textwidth]{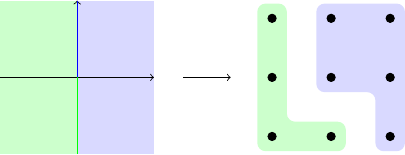}
        \caption{$\RR_{>0}$-invariant hemispaces over $\RR^2$ and their images over $\SS^2$, which are complementary hemispaces. }
        \label{fig:pf-hemispaces}
    \end{figure}

    Conversely, consider the complementary hemispaces
    \[
    Y = \{(x_1,x_2) \in \RR^2 \mid x_2 - x_1 > 0 \text{ or } x_1 = x_2 \geq 0\} \subseteq \RR^2 \quad , \quad Y^c = \RR^2 \setminus Y \, .
    \]
    These are not $\RR_{>0}$-invariant: for example, the point $(1,1) \in Y$ but $(2,1) \notin Y$.
    As a result, their images $X = \tau(Y)$ and $X^c = \tau(Y^c) \subseteq \SS^2$ are not disjoint, as both $X$ and $X^c$ contain $(\1,-\1)$ and $(-\1,\1)$, as seen in Figure \ref{fig:not-pf-hemispaces}.
    Furthermore, neither can be convex as the smallest convex set containing $(\1,-\1)$ and $(-\1,\1)$ is $\SS^2$.
    \begin{figure}[ht]
        \centering
        \includegraphics[width=0.6\textwidth]{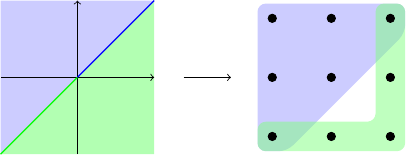}
        \caption{Hemispaces over $\RR^2$ that are not $\RR_{>0}$-invariant, and their images over $\SS^2$, which are not complementary hemispaces.}
        \label{fig:not-pf-hemispaces}
    \end{figure}
\end{example}

\section{Halfspace separation for stringent hyperfields}\label{sec:farkas}

We saw in Section~\ref{sec:halfspace} that the Hyperplane Separation Theorem (Theorem~\ref{thm:hyp+sep+thm}) does not hold in general for ordered hyperfields.
Moreover, in Section~\ref{sec:kakutani} we saw that hemispace separation holds for all quotient hyperfields via the Kakutani Property (Theorem~\ref{thm:kakutani}), implying that hemispaces may be the more natural objects for convex geometry over hyperfields.
However, there are examples of hyperfields for which (open) hyperplane separation does hold, such as $\TT\RR$, and we would like to begin classifying these hyperfields.
In this section, we make progress in this direction by restricting to stringent hyperfields.

We say that an ordering on $\HH$ is \emph{dense} if for all $a,b$ such that $b \boxplus -a \subseteq \HH^+$, there exists $c$ such that $b \boxplus -c \subseteq \HH^+$ and $c \boxplus -a \subseteq \HH^+$.
Note that this definition is equivalent to the definition of dense for order relations $\succ_{\HH^+}$.
It is straightforward to show that any ordering on $\FF\rtimes G$ is dense, and that an ordering on $\SS\rtimes G$ is dense if and only if the total order on $G$ is dense.

\begin{theorem} \label{thm:separation}
    Let $\HH$ be an ordered stringent hyperfield with a dense ordering.
    Consider a finitely generated convex set $\conv(T) \subseteq \HH^d$ and point $\bp \notin \conv(T)$.
    There exists an open halfspace $\HS(\phi)$ such that $\conv(T) \subseteq \HS(\phi)$ and $\bp \notin \HS(\phi)$ if either
    \begin{itemize}
        \item $\HH = \SS \rtimes G$,
        \item $\HH = \FF \rtimes G$, and $T$ and $\bp$ are sufficiently generic.
    \end{itemize}
\end{theorem}
We will refrain from precisely defining `sufficiently generic' until the proof, but one can consider it as taking sums of elements in $T \cup \{\bp\}$ always results in singleton vectors.

The proof of Theorem~\ref{thm:separation} closely follows and generalises the techniques used in~\cite{LV} to prove halfspace separation over the signed tropical semiring.
Explicitly, we adapt their formulation of Fourier-Motzkin elimination to all densely ordered stringent hyperfields.
Their methods require an adaption for hyperfields of the form $\FF \rtimes G$, where we must allow for inequalities whose coefficients may not be singletons.
This is the content of Section~\ref{sec:FM}.
The remainder of the proof is a variant of the Farkas lemma for hyperfields, given in Section~\ref{sec:farkaslem}.
The result follows for $\SS \rtimes G$, with the signed tropical hyperfield as an example of this.
However, the Farkas lemma only provably holds over $\FF \rtimes G$ when the inputs are sufficiently generic: it remains to be seen whether this genericity condition can be relaxed.

We briefly showcase some examples where densely ordered is necessary, but genericity may not be necessary.

\begin{example}
    Recall that the sign hyperfield does not have a dense ordering, as $\0 \prec \1$ but there is no element between them.
    One could also deduce this from Example~\ref{ex:S-non-open-sep}, where we present an example of a point that cannot be separated from a convex set with an open halfspace.
    We do not know whether one can find counterexamples over other stringent hyperfields without a dense ordering.
\end{example}

\begin{example}\label{ex:RxZ+sep}
    Let $\HH = \RR \rtimes \ZZ$.
    Recall from Example~\ref{ex:RxZ2} the convex set $\conv(\bp, -\bp) \subseteq \HH^2$ where $\bp = [(-1,0), (1,0)] \in \HH^2$.
    The point $\bq = [(1,0), (1,0)]$ is not contained in $\conv(\bp, -\bp)$, as depicted in Figure~\ref{fig:RxZ+sep}.
    We note that $\bp, -\bp$ and $\bq$ are far from generic points, however we can still separate $\bq$ from $\conv(\bp,-\bp)$.
    Consider the polynomial \added{$\phi = -X_1 \boxplus -X_2 \boxplus (k,0) \in \HH[X_1,X_2]$ where $k \in (0,2)$.
    One can check that $\conv(\bp,-\bp) \subseteq \HS(\phi)$ but $\bq \notin \HS(\phi)$.}
\end{example}
\begin{figure}
    \centering
    \includegraphics[scale=0.8]{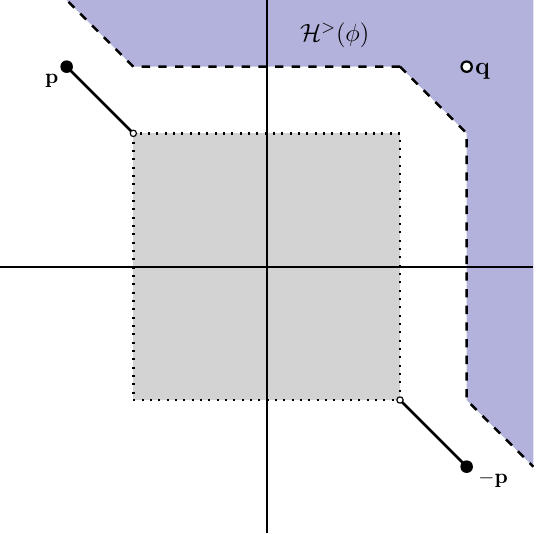}
    \caption{Separating the point $\bq$ from $\conv(\bp,-\bp)$ over $\RR \rtimes \ZZ$ as in Example~\ref{ex:RxZ+sep}.
    The separating open halfspace is $\HS(\phi)$ where \added{$\phi = -X \boxplus -Y \boxplus (k,0)$ for $k \in (0,2)$}.
    }
    \label{fig:RxZ+sep}
\end{figure}

Motivated by Example~\ref{ex:RxZ+sep}, we ask whether separation holds more generally for hyperfields of the form $\FF \rtimes G$.
\begin{question}
    Can we drop the genericity condition from Theorem~\ref{thm:separation}?
\end{question}

\subsection{Fourier-Motzkin elimination for stringent hyperfields} \label{sec:FM}
In the following, we will develop Fourier-Motzkin elimination for stringent hyperfields.
\added{Every hyperfield $\HH$ shall be assumed to be ordered and stringent in this subsection.}
As such, we can now alternate between the ordering $\HH^+$ and the order relation $\succ_{\HH^+}$ as it is a strict total order by Proposition~\ref{prop:partial+order}.

Up until this point, we have treated linear polynomials $\phi \in \HH[\bX]$ as having singleton coefficients.
However, when doing Fourier-Motzkin elimination we will have to add polynomials, resulting in `polynomials' whose coefficients are sets.
In general, not every subset of $\HH$ will be obtainable as a sum of elements.
As such, we say a subset $A \subseteq \HH$ is \emph{realisable} if it can be written as a (finite) linear combination of elements of $\HH$, i.e. $A = \bigboxplus_{i=1}^n a_i$ where $a_i \in \HH$.
We denote the set of realisable sets of $\HH$ as $\cR(\HH) \subseteq \cP(\HH)^*$.

\begin{lemma}{\cite[Lemma 39]{BP}}\label{lem:realisable+sets}
    Let $\HH$ be a stringent hyperfield.
    The realisable sets of $\HH$ are precisely the singletons and $a \boxplus -a$ for $a \in \HH$.
\end{lemma}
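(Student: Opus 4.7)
The plan is to argue by induction on the number of summands $n$ in a realisable set $A = \bigboxplus_{i=1}^n a_i$. The base case $n \leq 2$ is the defining property of stringency: $a_1 \boxplus a_2$ is a singleton precisely when $a_2 \neq -a_1$, and otherwise equals $a_1 \boxplus -a_1$.

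For the inductive step, I set $B = \bigboxplus_{i=1}^n a_i$, which by hypothesis is either a singleton $\{b\}$ or of the form $b \boxplus -b$, and let $A = B \boxplus a_{n+1}$. When $B$ is a singleton, $A = b \boxplus a_{n+1}$ reduces immediately to the base case. The substantive work lies in the second case, where I must show $(b \boxplus -b) \boxplus a_{n+1}$ is itself either a singleton or of the form $c \boxplus -c$.

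To handle this, I invoke Theorem~\ref{thm: stringent+classification} and write $\HH$ as $\HH' \rtimes G$ for $\HH' \in \{\KK, \SS, \FF\}$; pure fields are trivial, since every sum is a singleton there. The structural fact driving the argument, immediate from the addition formula on $\HH' \rtimes G$, is that the set $b \boxplus -b$ depends only on the $G$-component $g$ of $b$, and equals
\[
\{(c, g') \mid c \in (\HH')^\times \, , \, g' < g\} \cup \{(c, g) \mid c \in (a \boxplus_{\HH'} -a) \cap (\HH')^\times\} \cup \{\0\},
\]
where $a$ denotes the $\HH'$-component of $b$. Letting $h$ be the $G$-component of $a_{n+1}$ and comparing it with $g$ produces the dichotomy: when $h > g$, every pointwise sum $s \boxplus a_{n+1}$ for $s \in B$ collapses to $a_{n+1}$ (in particular $\0 \boxplus a_{n+1} = a_{n+1}$), so $A = \{a_{n+1}\}$; when $h \leq g$, a direct computation of $\bigcup_{s \in B} s \boxplus a_{n+1}$ shows the contributions recombine to give $A = b \boxplus -b = B$, preserving the required form.

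The main obstacle is this last calculation when $h \leq g$, particularly at the critical layer $g' = h$: summands of equal $G$-value may cancel to produce a tail of elements of strictly smaller value together with $\0$, or they may combine to singletons of value $h$. One must verify that these contributions, together with the unchanged elements at heights $h < g' \leq g$, reassemble to all of $B$. This is a routine but bookkeeping-heavy verification, made tractable by the explicit addition formula on $\HH' \rtimes G$, and it completes the induction.
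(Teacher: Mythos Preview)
The paper does not prove this lemma itself; it is quoted as \cite[Lemma~39]{BP} without argument, so there is no in-paper proof to compare against. Your inductive approach via the Bowler--Su classification (Theorem~\ref{thm: stringent+classification}) is a sound route to the result.

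There is one inaccuracy in your case split. Over $\FF \rtimes G$ the ``top layer'' $\{(c,g) \mid c \in (a +_\FF -a) \cap \FF^\times\}$ of $b \boxplus -b$ is empty, so $B$ consists only of $\0$ together with elements of $G$-component strictly below $g$. Hence when $a_{n+1}$ has $G$-component $h = g$, every $s \in B$ satisfies $s \boxplus a_{n+1} = a_{n+1}$, and $A = \{a_{n+1}\}$ is a singleton rather than $B$ as you claim. This does not damage the induction---a singleton is one of the permitted forms---but your uniform dichotomy ``$h > g$ yields a singleton, $h \leq g$ yields $B$'' is correct only for $\HH' \in \{\KK, \SS\}$; for $\HH' = \FF$ the split should read $h \geq g$ versus $h < g$. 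With that adjustment (and the trivial observation that $a_{n+1} = \0$ gives $A = B$), the argument closes.
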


We require some technical lemmas that will allow us to eliminate variables from inequality systems.
\begin{lemma} \label{lem: dense+order}
    Let $\HH$ be an ordered stringent hyperfield whose ordering is dense, and let $A,B \in \cR(\HH)$ be realisable sets.
    Then $a \prec b$ for all $a \in A$ and $b \in B$ if and only if there exists $c \in \HH$ such that $a \prec c \prec b$ for all $a \in A, b \in B$.
\end{lemma}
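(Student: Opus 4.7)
The reverse direction is immediate from the transitivity of $\prec$ (Proposition~\ref{prop:partial+order}). For the forward direction, the plan is to combine Lemma~\ref{lem:realisable+sets}, which characterises realisable sets in a stringent hyperfield as either singletons or sums $x \boxplus -x$, with the classification of ordered stringent hyperfields (Proposition~\ref{prop:ord+stringent+class}) that reduces $\HH$ to $\SS \rtimes G$ or $\FF \rtimes G$. Since $\0 \boxplus \0 = \{\0\}$ is a singleton, the non-singleton case forces $x \neq \0$.

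Two of the four combinatorial cases are immediate. If both $A = a' \boxplus -a'$ and $B = b' \boxplus -b'$ are non-singleton, each contains $\0$; the hypothesis fails for $a = b = \0$, so the implication holds vacuously. If both $A = \{a\}$ and $B = \{b\}$ are singletons with $a \prec b$, then density of the ordering on $\HH$ directly yields $c$. The case ``$A$ non-singleton, $B$ singleton'' reduces to the remaining case via the symmetry $x \prec y \Leftrightarrow -y \prec -x$ together with the identity $-(a' \boxplus -a') = a' \boxplus -a'$: applying the remaining case to $\hat{A} = \{-b\}$ and $\hat{B} = A$ produces some $\hat{c}$, from which $c := -\hat{c}$ solves the original.

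The main case is therefore $A = \{a\}$ and $B = b' \boxplus -b'$ with $b' \neq \0$, which will be handled by computing $B$ explicitly in each class. In the $\SS \rtimes G$ subcase, writing $b' = (\pm 1, h)$ gives $B = \{(s, h') : s \in \{\pm 1\}, h' \leq h\} \cup \{\0\}$; a direct calculation shows that $B$ admits a $\prec$-minimum at $(-1, h)$, so $a \prec (-1, h)$ strictly, density of $\HH$ provides $c$ with $a \prec c \prec (-1, h)$, and transitivity gives $c \prec b$ for all $b \in B$. In the $\FF \rtimes G$ subcase, writing $b' = (f, h)$ gives $B = \{(f', g') : f' \in \FF^\times, g' < h\} \cup \{\0\}$, which has no $\prec$-minimum. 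Here the key step is to identify the ``strict lower bound set'' $\{x \in \HH : x \prec b \text{ for all } b \in B\}$ as exactly $\{(f_a, g_a) : f_a < 0, \, g_a \geq h\}$, and then construct $c$ in this set with $a \prec c$ explicitly: take $c = (f_a, h)$ when $g_a > h$, and $c = (f_a/2, h)$ when $g_a = h$, the latter relying on density of the ordered field $\FF$.

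The main obstacle will be the $\FF \rtimes G$ subcase, where the absence of a $\prec$-minimum in $B$ forces an explicit analysis of the lower bound set rather than a single application of density; all other cases reduce to transitivity, density of $\HH$, or vacuous reasoning.
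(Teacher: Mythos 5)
Your proof is correct and takes essentially the same approach as the paper. The paper picks the symmetric sub-case where $B$ is the singleton and $A = a \boxplus -a$ has supremum $a$, then uses density to find $c$ between $a$ and $b$; you pick $A$ singleton and $B$ with minimum, reducing the other sub-case by negation. In $\FF\rtimes G$ the paper takes $b = (\ell, h)$ with $h \geq g$ and sets $c = (\ell/2, h)$, which mirrors your explicit construction $(f_a, h)$ or $(f_a/2, h)$ after characterising the lower-bound set. Your version is somewhat more detailed (the vacuous both-nonsingleton case and the negation reduction are spelled out rather than absorbed into a ``without loss of generality''), but the decomposition and the key use of density are the same.
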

\begin{proof}
    We first observe that at least one of $A, B$ must be a singleton, else $\0 \in A \cap B$ and $\0 \nprec \0$.
    If both are singletons, the order being dense gives the result.
    If one is a singleton, say $B = \{b\}$, then the proof depends on the hyperfield.

    For $\HH = \SS \rtimes G$, without loss of generality we have $A = a \boxplus -a$, and $a = \sup(A)$.
    As $a' \preceq a \prec b$ for all $a' \in A$, denseness of the order gives the existence of some $c$ such that $a \prec c \prec b$.

    For $\HH = \FF \rtimes G$, we have $A = (k,g) \boxplus (-k,g) = \{(k', g') \mid g' < g \, , k' \in \FF\} \cup \0$.
    This implies $b = (\ell, h)$ for some $h \geq g$ and $\ell \in \FF^+$.
    Taking $c = (\ell/2, h)$ suffices.
\end{proof}

\begin{lemma}\label{lem:sign+set+split}
    Let $\HH$ be a stringent ordered hyperfield, and $a, b \in \HH$.
    Then $a \boxplus b \boxplus -b \succ \0$ if and only if there exists $c \in \HH$ such that $a \boxplus c \succ \0 \, , \, a \boxplus -c \succ \0$ and $b \boxplus -b = c\boxplus -c$.
\end{lemma}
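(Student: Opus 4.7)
The approach is to invoke the classification of ordered stringent hyperfields (Proposition~\ref{prop:ord+stringent+class}), so that $\HH$ is of the form $\SS \rtimes G$ or $\FF \rtimes G$, and then argue directly from the explicit hyperoperation on $\HH \rtimes G$.  A key preliminary observation is that whenever $a \boxplus c \succ \0$, the set $a \boxplus c$ must in fact be a singleton lying in $\HH^+$: by Lemma~\ref{lem:realisable+sets} any non-singleton realisable sum has the form $x \boxplus -x$, and this forces $a = -c$, in which case $a \boxplus c$ contains both $\0$ and elements of $\HH^-$, contradicting $\succ \0$.  Throughout I will write elements of $\HH \rtimes G$ as $(x,g)$ with $x \in \HH^\times$ and $g \in G$, and let $g_a, g_b, g_c$ denote the $G$-components of $a,b,c$ respectively.

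For the forward direction, assume $a \boxplus b \boxplus -b \succ \0$.  The case $b = \0$ is immediate: take $c = \0$.  For $b \neq \0$, a direct computation of $a \boxplus (b \boxplus -b)$ using the hyperoperation on $\HH \rtimes G$ shows that the sum contains elements of both signs (and hence is not $\succ \0$) unless $a \in \HH^+$ and $a$ dominates $b$ in the $G$-component: strictly ($g_a > g_b$) over $\SS \rtimes G$, and weakly ($g_a \geq g_b$) over $\FF \rtimes G$.  When $a$ strictly dominates $b$, the choice $c = b$ works: one has $c \boxplus -c = b \boxplus -b$ trivially, and $G$-domination collapses both $a \boxplus c$ and $a \boxplus -c$ to $\{a\} \subseteq \HH^+$.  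The weak case $g_a = g_b$ only arises over $\FF \rtimes G$; writing $a = (\ell, g_a)$ with $\ell \in \FF^+$, I set $c = (\ell/2, g_a)$, so that $c \boxplus -c = b \boxplus -b$ (both equal $\{(k',g') : g' < g_b,\, k' \in \FF^\times\} \cup \{\0\}$), while $a \boxplus c = (3\ell/2, g_a)$ and $a \boxplus -c = (\ell/2, g_a)$ are positive singletons.  The main obstacle is precisely this weak case: the choice $c = b$ fails since $a \boxplus -b$ may contain $\0$ or straddle both signs, so one must exploit the ordered field structure of $\FF$ to pick the $\FF$-coefficient of $c$ inside $(-\ell, \ell) \cap \FF^\times$ to avoid cancellation in both $a \boxplus c$ and $a \boxplus -c$.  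No analogous subtlety occurs over $\SS \rtimes G$, since $g_a = g_b$ there already forces $a \boxplus b \boxplus -b$ to contain $\0$, contradicting the forward hypothesis.

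For the reverse direction, given $c$ with the stated properties, the equality $b \boxplus -b = c \boxplus -c$ immediately forces $g_b = g_c$ (by reading off the supremum of $G$-components, or by noting that both sides being $\{\0\}$ reduces to $b = c = \0$).  The preliminary singleton observation yields $a \neq \pm c$, and $a = \0$ is ruled out since otherwise $a \boxplus c = c$ and $a \boxplus -c = -c$ would both need to lie in $\HH^+$, contradicting the trichotomy of the ordering.  A short case analysis on $g_a$ versus $g_c$ --- essentially the reverse of the forward computation --- concludes $a \in \HH^+$ with $g_a \geq g_c$.  Associativity then yields $a \boxplus b \boxplus -b = a \boxplus (c \boxplus -c)$, and $G$-domination of $a$ over every element of $c \boxplus -c$ collapses this sum to $\{a\} \subseteq \HH^+$, completing the proof.
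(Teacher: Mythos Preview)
Your proof is correct and follows essentially the same route as the paper: invoke the classification of ordered stringent hyperfields and argue case-by-case on $\SS \rtimes G$ versus $\FF \rtimes G$, with the same special choice $c = (\ell/2, g_b)$ in the weak-domination case over $\FF \rtimes G$. The only notable difference is organisational: the paper handles the reverse direction over $\SS \rtimes G$ without tracking $G$-components at all, instead using the identity $a \boxplus a = a$ to write $(a \boxplus b) \boxplus (a \boxplus -b) = a \boxplus b \boxplus -b \subseteq \HH^+$ in one line, whereas you derive $g_a > g_c$ and then collapse the sum --- both work, and your version has the mild advantage of making the $G$-domination picture uniform across the two families.
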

\begin{proof}
    We prove the statement separately for $\SS \rtimes G$ and $\FF \rtimes G$.
    
    For $\SS \rtimes G$, we note that $b \boxplus -b = c \boxplus -c$ if and only if $c = \pm b$, so we prove it for $c=b$.
    For the first direction, we note that $a \boxplus b \boxplus -b = \bigcup_{d \in b \boxplus -b} a \boxplus d$. 
    As $\{b, -b\} \subseteq b \boxplus -b$, this immediately implies $a \boxplus b, a\boxplus -b \subseteq \HH^+$ as special cases.
    Conversely, as $a \boxplus a = a$ in $\SS \rtimes G$ and $\HH^+$ is closed under sums, we have
    \[
    (a \boxplus b) \boxplus (a\boxplus -b) = a \boxplus b \boxplus -b \subseteq \HH^+ \, .
    \]
    For $\FF \rtimes G$, we denote $a = (k_a, g_a)$ and $b = (k_b,g_b)$.
    For the first direction,
    \[
    a \boxplus (b \boxplus -b) = (k_a, g_a) \boxplus \left(\left\{(\ell, h) \mid h < g_b \right\}\cup \{\0\}\right) \subseteq \HH^+ 
     \quad \Rightarrow \quad g_a \geq g_b \, , \, k_a \in \FF^+ .
    \]
    Setting $c = (k_a/2, g_b)$ satisfies the conditions that $a \boxplus \pm c \subseteq \HH^+$ and $c \boxplus -c = b \boxplus -b$.
    
    Conversely, suppose $a \boxplus c \subseteq \HH^+$ and $a \boxplus -c \subseteq \HH^+$.
    then $g_a \geq g_c = g_b$.
    If $g_c > g_a$, then $a \boxplus \pm c = \pm c$, which cannot both be positive.
    Hence $g_a \geq g_c = g_b$; this immediately implies that $a \boxplus b \boxplus -b \subseteq \HH^+$.
\end{proof}

\begin{remark}
    In the previous lemma, we can replace the element $a \in \HH$ with the realisable set $A \in \cR(\added{\HH})$ and get precisely the same result.
    This is because $A \boxplus (b \boxplus -b) \subseteq \HH^+$ forces $A$ to be a singleton over stringent hyperfields, as the sum of two non-singleton sets contains $\0$.
\end{remark}


Consider the system of $n$ strict linear inequalities in $d$ variables:
\begin{align} \label{eq:strict+inequality}
    \bigboxplus_{i=1}^d A_{ij} \odot X_i \succ \0 \quad , \quad \added{A_{ij} \in \cR(\HH)} \, , \, j = 1, \dots, n \, .
\end{align}
While each $\bigboxplus_{i=1}^d A_{ij} \odot X_i$ is not a linear polynomial, we can consider it as a finite sum of linear polynomials as each $A_{ij}$ is realisable.
We let $\cA = (A_{ij}) \in \cR(\HH)^{d\times n}$ denote the $(d \times n)$-matrix of realisable sets encoding~\eqref{eq:strict+inequality}, and let $\cS_\succ(\cA) \subseteq \HH^d$ denote the solution set to~\eqref{eq:strict+inequality}.
We claim that Fourier-Motzkin elimination gives us a system of inequalities for the $i$th coordinate projection of the solution set
\[
\pi_{i}(\cS_\succ(\cA)) = \{(p_1, \dots, p_{i-1},p_{i+1}, \dots, p_d) \in \HH^{d-1} \mid (p_1,\dots,p_d) \in \cS_\succ(\cA)\} \, .
\]

We show how to find an inequality system for $\pi_d(\cS_\succ(\cA))$: the process for other coordinates is analogous.
We partition $[n]$ into $J^{+}, J^-, J^\bullet$ and $J^\0$ based on whether the $d$th coefficient is positive, negative, a set or zero.
Moreover, we can scale the $j$th inequality so that $A_{dj}\in \{\1, -\1, \1 \boxplus -\1, \0\}$ without affecting the solution set; this means~\eqref{eq:strict+inequality} is equivalent to the inequality system
\begin{align} \label{eq:strict+inequality+mod}
\begin{tabular}{ccccc}
    $\left(\bigboxplus_{i=1}^{d-1} A_{ij} \odot X_i\right)$ & $\boxplus$ & $\1 \odot X_d$ & $\succ \0$ & $, \quad  j \in J^+ \, ,$ \\
    $\left(\bigboxplus_{i=1}^{d-1} A_{ij} \odot X_i\right)$ & $\boxplus$ & $-\1 \odot X_d$ & $\succ \0$ & $, \quad  j \in J^- \, ,$ \\
    $\left(\bigboxplus_{i=1}^{d-1} A_{ij} \odot X_i\right)$ & $\boxplus$ & $(\1 \boxplus -\1) \odot X_d$ & $\succ \0$ & $, \quad  j \in J^\bullet \, ,$ \\
    $\left(\bigboxplus_{i=1}^{d-1} A_{ij} \odot X_i\right)$ &  &  & $\succ \0$ & $, \quad  j \in J^\0 \, ,$ \\
\end{tabular}
\end{align}
To ease length of notation, we denote $J^{+\bullet} = J^+ \cup J^\bullet$ and $J^{-\bullet} = J^- \cup J^\bullet$.

\begin{proposition}[Strict Fourier-Motzkin]\label{prop:strict+fm}
    Let $\HH$ be an ordered stringent hyperfield whose ordering is dense.
    The $d$-th coordinate projection of $\cS_\succ(\cA)$ is
    \[
    \pi_d(\cS_\succ(\cA)) = \cS_\succ(\tilde{\cA})
    \]
    where $\tilde{\cA}$ induces the following system of inequalities in $d-1$ variables:
    \begin{equation}
    \label{eq:strict+inequality+d-1}\begin{split} 
\bigboxplus_{i=1}^{d-1} \tilde{A}_{i,jk} \odot X_i &\succ \0 \quad , \quad (j,k) \in J^{+\bullet} \times J^{-\bullet} \quad , \quad \tilde{A}_{i,jk} = A_{ij} \boxplus A_{ik}  \\
\bigboxplus_{i=1}^{d-1} A_{ij} \odot X_i &\succ \0 \quad , \quad j \in J^\0 \, .
\end{split}
\end{equation}
\end{proposition}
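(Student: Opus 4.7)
The plan is to establish the two inclusions $\pi_d(\cS_\succ(\cA)) \subseteq \cS_\succ(\tilde{\cA})$ and $\cS_\succ(\tilde{\cA}) \subseteq \pi_d(\cS_\succ(\cA))$ in turn. For a candidate $\bp' = (p_1, \dots, p_{d-1}) \in \HH^{d-1}$ and each $j \in [n]$, write $L_j := \bigboxplus_{i=1}^{d-1} A_{ij} \odot p_i$; by Lemma~\ref{lem:realisable+sets} and stringency, each $L_j$ is either a singleton or of the form $a_j \boxplus -a_j$.

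The first inclusion follows by a direct summation. Given $(p_1, \dots, p_d) \in \cS_\succ(\cA)$, the reduced inequalities indexed by $J^\0$ are immediate, while for a pair $(j,k) \in J^{+\bullet} \times J^{-\bullet}$ we add the $j$th and $k$th inequalities of \eqref{eq:strict+inequality+mod}: the sum lies in $\HH^+$ by closure under $\boxplus$, and the $X_d$ contribution becomes $(\epsilon_j \boxplus \epsilon_k) \odot p_d$ with $\epsilon_j \in \{\1, \1\boxplus-\1\}$ and $\epsilon_k \in \{-\1, \1\boxplus-\1\}$, so that $\0 \in \epsilon_j \boxplus \epsilon_k$ in every case. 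The identity axiom then yields $L_j \boxplus L_k \subseteq L_j \boxplus L_k \boxplus (\epsilon_j \boxplus \epsilon_k) \odot p_d \subseteq \HH^+$, which is precisely the reduced inequality $\bigboxplus_{i=1}^{d-1} \tilde{A}_{i,jk}\odot p_i \succ \0$.

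The reverse inclusion is the heart of the argument. Given $\bp' \in \cS_\succ(\tilde{\cA})$ we must exhibit some $p_d$ so that $(\bp',p_d) \in \cS_\succ(\cA)$. Translating the original inequalities into constraints on $p_d$, we need: (i) $p_d \succ -\ell$ for all $\ell \in L_j$, $j \in J^+$; (ii) $\ell \succ p_d$ for all $\ell \in L_k$, $k \in J^-$; (iii) $L_j \boxplus (p_d \boxplus -p_d) \succ \0$ for $j \in J^\bullet$. For pairs $(j,k) \in J^+ \times J^-$ the reduced inequality $L_j \boxplus L_k \succ \0$ asserts exactly that every element of $-L_j$ lies strictly below every element of $L_k$, so density of the ordering (Lemma~\ref{lem: dense+order}) yields a $p_d$ satisfying (i)--(ii) simultaneously.

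The main obstacle is incorporating (iii). By Lemma~\ref{lem:sign+set+split}, condition (iii) for a given $j \in J^\bullet$ is equivalent to the existence of $c_j \in \HH$ with $L_j \boxplus c_j, L_j \boxplus -c_j \succ \0$ and $c_j \boxplus -c_j = p_d \boxplus -p_d$. When $\HH = \SS \rtimes G$ the last equation forces $c_j = \pm p_d$, so (iii) collapses to $-L_j \prec p_d \prec L_j$; combined with (i)--(ii), the necessary and sufficient feasibility condition becomes $-L_j \prec L_k$ for all $(j,k) \in J^{+\bullet} \times J^{-\bullet}$, which is exactly the remaining family of reduced inequalities. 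When $\HH = \FF \rtimes G$, writing $p_d = (k_p, g_p)$, the set $c_j \boxplus -c_j$ depends only on $g_p$, giving enough freedom to choose each $c_j$ so that (iii) reduces to strict order conditions on $L_j$ compatible with (i)--(ii) via the pairs $(j,k)$ involving $J^\bullet$ in the reduced system. In both cases density supplies the desired $p_d$, completing the argument.
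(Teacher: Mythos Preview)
Your overall strategy—prove the two inclusions directly, using closure of $\HH^+$ under $\boxplus$ for the forward direction and Lemmas~\ref{lem: dense+order} and~\ref{lem:sign+set+split} for the reverse—parallels the paper's argument closely. The paper organises things slightly differently: it first replaces each type-$(\bullet)$ inequality by a type-$(+)$/type-$(-)$ pair (via Lemma~\ref{lem:sign+set+split}) and only then performs the usual elimination via Lemma~\ref{lem: dense+order}. Your direct treatment of $J^\bullet$ amounts to the same thing, and over $\HH=\SS\rtimes G$ both arguments go through.

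There is, however, a genuine error in your forward inclusion. You assert that $\0 \in \epsilon_j \boxplus \epsilon_k$ in every case, but this fails over $\HH = \FF \rtimes G$ whenever exactly one of $\epsilon_j,\epsilon_k$ is the set $\1\boxplus -\1$: indeed $\1 \boxplus (\1 \boxplus -\1) = \{\1\}$ there, since every element of $\1\boxplus -\1$ has strictly smaller $G$-coordinate than $\1$. This is not merely a gap in your argument but reflects a real obstruction. Concretely, with $d=2$ over $\RR\rtimes\ZZ$, take inequality $1\in J^+$ with $A_{11}=(-3,0)$ and inequality $2\in J^\bullet$ with $A_{12}=(1,0)$; then $(p_1,p_2)=((1,0),(5,0))$ lies in $\cS_\succ(\cA)$, yet the reduced inequality for the pair $(1,2)$ reads $(-2,0)\odot p_1 \succ \0$, which fails at $p_1=(1,0)$. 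Thus $\pi_2(\cS_\succ(\cA))\not\subseteq\cS_\succ(\tilde\cA)$. The paper's proof has the analogous weak point: its appeal to Lemma~\ref{lem:sign+set+split} to swap a bullet inequality for a $(+)/(-)$ pair only shows the two conditions have the same \emph{existence} of a solution in $p_d$, not the same solution \emph{set}, and it is the latter that matters once one intersects with the remaining constraints. Over $\SS\rtimes G$ this distinction evaporates (since $c\boxplus -c = p_d\boxplus -p_d$ forces $c=\pm p_d$), and in the applications the proposition is invoked either over $\SS\rtimes G$ or in the generic regime where $J^\bullet=\emptyset$ throughout; in those settings your argument is sound.
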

\begin{proof}
We refer to the inequalities in~\eqref{eq:strict+inequality+mod} of type $(+, -, \bullet, \0)$ respectively.
We first observe that we can replace each inequality of type $(\bullet)$ by one of type $(+)$ and one of type $(-)$.
Such an inequality can only have a solution if $\bigboxplus_{i=1}^{d-1} A_{ij} \odot X_i$ is a singleton, else we have the sum of two sets which must contain $\0$.
Using Lemma~\ref{lem:sign+set+split}, we deduce that $\left(\bigboxplus_{i=1}^{d-1} A_{ij} \odot X_i\right) \boxplus (\1 \boxplus-\1) \odot X_d$ has a solution if and only if the inequalities
\[
\begin{cases}
    \left(\bigboxplus_{i=1}^{d-1} A_{ij} \odot X_i\right) \boxplus \1 \odot X_d \succ \0\\
    \left(\bigboxplus_{i=1}^{d-1} A_{ij} \odot X_i\right) \boxplus -\1 \odot X_d \succ \0\\
\end{cases} \, 
\]
have a solution.
Repeating this for all inequalities of type $(\bullet)$ gives an inequality system with a solution if and only if~\eqref{eq:strict+inequality+mod} has a solution; consisting of inequalities of type $(+)$ for all $J^{+\bullet}$, inequalities of type $(-)$ for all $J^{-\bullet}$ and inequalities of type $(\0)$ for all $J^\0$.

Finally we eliminate $X_d$ in the following way.
Lemma~\ref{lem: dense+order} implies that for every pair of type $(+)$ and type $(-)$ inequalities $(j,k) \in J^{+\bullet} \times J^{-\bullet}$, there exists a common solution if and only if there is a solution to the inequality
\[
\bigboxplus_{i=1}^{d-1} \tilde{A}_{i,jk} \odot X_i \succ \0 \quad , \quad \tilde{A}_{i,jk} = A_{ij} \boxplus A_{ik} \, .
\]
Ranging over all $(j,k)$ gives the result.
Note that the upper bound of $n^2$ inequalities occurs when all $n$ inequalities are of type $(\bullet)$.
\end{proof}

\added{
\begin{example} \label{ex:fm}
    Consider the system of inequalities over $\RR \rtimes \ZZ$ defined by the columns of the matrix $\cA$:
    \[
    \begin{aligned}
        &(1): & \: -X_1 \boxplus X_2 \boxplus X_3 &\succ \0 \\
        &(2): & \: X_1 \boxplus -X_2 \boxplus X_3 &\succ \0 \\
        &(3): & \:-X_1 \boxplus -X_2 \boxplus -X_3 &\succ \0
    \end{aligned} \, , \quad\quad
    \cA := 
    \begin{blockarray}{cccc}
     1 & 2 & 3 &\\
    \begin{block}{(ccc)c}
     (-1,0) & (1,0) & (-1,0) & X_1  \\
     (1,0) & (-1,0) & (-1,0) & X_2 \\
     (1,0) & (1,0) & (-1,0) & X_3 \\
    \end{block} 
    \end{blockarray}
    \subseteq (\RR \rtimes \ZZ)^{3 \times 3} \, .
    \]
    We demonstrate how Fourier-Motzkin elimination can be used to compute $\cS_\succ(\cA)$ and solve this system.

    We first eliminate $X_3$ from our system, so our equations are partitioned into $J^+ = \{1,2\}$, $J^- = \{3\}$ and $J^\bullet = J^\0 = \emptyset$.
    By Proposition~\ref{prop:strict+fm}, the projected solution set $\pi_3(\cS_\succ(\cA))$ is equal to the solution set of the two-variable system determined by the matrix
    \begin{equation} \label{eq:first+fm+step}
    \tilde{\cA} := 
    \begin{blockarray}{ccc}
    (1,3) & (2,3) & \\
    \begin{block}{(cc)c}
    (-1,0) & (1,0) \boxplus (-1,0) & X_1 \\
    (1,0) \boxplus (-1,0) & (-1,0) & X_2  \\
    \end{block} 
    \end{blockarray}
    \subseteq \cR(\RR \rtimes \ZZ)^{2 \times 2} \, ,
    \end{equation}
    where $J^{+\bullet} \times J^{-\bullet} = \{(1,3), (2,3)\}$.
    Note that the coefficients of the corresponding inequalities are no longer singletons, but rather realisable sets.

    We next eliminate $X_2$ from our system.
    Keeping the same equation labels as \eqref{eq:first+fm+step}, our equations are partitioned into $J^\bullet = \{(1,3)\}$ and $J^- = \{(2,3)\}$, with both $J^+ = J^\0 = \emptyset$.
    Again applying Proposition~\ref{prop:strict+fm}, we obtain the projected solution set $\pi_2(\pi_3(\cS_\succ(\cA)))$ as the solution of the single variable system determined by the matrix
    \[
    \bar{\cA} := 
    \begin{blockarray}{ccc}
    ((1,3),(2,3)) & ((1,3),(1,3)) & \\
    \begin{block}{(cc)c}
    (-1,0) & (-2,0) & X_1 \\
    \end{block} 
    \end{blockarray}
    \subseteq \cR(\RR \rtimes \ZZ)^{1 \times 2} \, .
    \]

    Write $X_i = (k_i,g_i)$, we now work backwards to describe the solution set of our system.
    The two inequalities described by $\bar{\cA}$ simply imply that $(k_1,g_1) \prec \0$, hence $\pi_{2}(\pi_3(\cS_\succ(\cA))) = \{(k_1, g_1) \mid k_1 < 0\}$.
    Using this, we deduce that the two equations described by $\tilde{\cA}$ reduce to
    \begin{align*}
    (1,3) &\colon (-k_1,g_1) \boxplus (k_2,g_2) \boxplus (-k_2,g_2) = 
    \begin{cases}
    (-k_1, g_1) & g_1 \geq g_2 \\
    \{(k,g) \mid g < g_2, k \in \RR \} & g_1 < g_2 \quad (\#)
    \end{cases}\\
    (2,3) &\colon (k_1,g_1) \boxplus (-k_1,g_1) \boxplus (-k_2,g_2) = 
    \begin{cases}
    (-k_2, g_2) & g_2 \geq g_1 \\
    \{(k,g) \mid g < g_1, k \in \RR \} & g_2 < g_1 \quad (\#)
    \end{cases}
    \end{align*}
    The lines labelled by $(\#)$ contradict that the output must be positive, hence we deduce that $g_1 = g_2 = g$.
    Furthermore, the other case of $(2,3)$ gives us that $k_2 < 0$, hence $\pi_3(\cS_\succ(\cA)) = \{[(k_1, g), (k_2,g)] \mid k_1,k_2 < 0\}$.
    Finally, we deduce that our original system of equations reduces to
    \begin{align*}
        (1): & \: (-k_1,g) \boxplus (k_2,g) \boxplus (k_3,g_3) \succ \0 \\
        (2): & \: (k_1,g) \boxplus (-k_2,g) \boxplus (k_3,g_3) \succ \0 \\
        (3): & \:(-k_1,g) \boxplus (-k_2,g) \boxplus (-k_3,g_3) \succ \0
    \end{align*}
    If $g_3 > g$, equations $(1)$ and $(3)$ respectively imply $k_3 >0$ and $k_3< 0$, a contradiction.
    If $g_3 < g$, equations $(1)$ and $(2)$ respectively imply $k_1 - k_2 > 0$ and $k_2 - k_1 > 0$, a contradiction.
    Hence $g_3 = g$, and we get the description of the solution set
    \begin{equation}
    \label{eq:ex+solution+set}
    \cS_{\succ}(\cA) := \{[(k_1, g), (k_2,g), (k_3,g)] \in (\RR \rtimes \ZZ)^3 \mid k_1 - k_2 + k_3 > 0 \, , \, -k_1 + k_2 + k_3 > 0 \, , \, -k_1 -k_2 - k_3 > 0\} \, .
    \end{equation}
    Note that the previous inequalities $k_1, k_2 < 0$ we deduced are implied by these inequalities.
\end{example}
}

\begin{remark}
    When $\HH = \SS \rtimes G$, we can improve this further by replacing inequalities with set coefficients by two equations with singleton coefficients.
We can replace the inequality $\bigboxplus_{i=1}^d A_{i} \odot X_i \succ \0$ by the two equations
\begin{align*} \label{eq:finite+strict}
    \bigboxplus_{i=1}^{d} a^+_{i} \odot X_i \succ \0 \quad , \quad  \bigboxplus_{i=1}^{d} a^-_{i} \odot X_i \succ \0 
    \quad \text{ where }\begin{cases}
        a_i^+ = a_i^- = a &\text{ if } A_{i} = \{a\} \\
        a_i^+ = -a_i^- = a \in \HH^+ &\text{ if } A_{i} = a \boxplus -a
    \end{cases} \, .
\end{align*}
The proof is analogous to the proof over $\TT\RR$ given in~\cite[Theorem 4.12]{LV}, but the crucial detail is that the set $a \boxplus -a$ has a well-defined supremum and infimum, $a$ and $-a$.

The same is not true over $\FF \rtimes G$, as
\[
a \boxplus -a = (k_a, g_a) \boxplus (-k_a, g_a) = \left\{(\ell, h) \mid h < g_a, \ell \in \FF^+\right\} \, .
\]
If $G$ is dense, there is no largest $h$ such that $h < g_a$.
Even if it is not dense, there is no largest $\ell \in \FF^+$.
As such, there is no finite set of elements we can substitute $a \boxplus -a$ for.
\end{remark}

\subsection{Farkas lemma for stringent hyperfields} \label{sec:farkaslem}
We now return to $\HH$ being any ordered hyperfield.
We shall restrict to stringent hyperfields again for the main theorem of this section: the Farkas lemma.

\begin{definition}
Let $\cA = (A_{ij}) \in \cR(\HH)^{d\times n}$.
    The \emph{(positive) kernel of $\cA$} is the set
    \[
    \ker(\cA) = \SetOf{\blambda \in (\HH^+ \cup \{\0\})^n\setminus (\0, \dots, \0)}{\bigboxplus_{j=1}^n \lambda_j \odot A_{ij} \ni \0 \quad \forall i \in [d]} \, .
    \]
    The \emph{separation set of $\cA$} is the set 
    \[
    \sep(\cA) = \SetOf{\balpha \in \HH^d}{\bigboxplus_{i=1}^d \alpha_i \odot A_{ij} \subseteq \HH^+ \quad \forall j \in [n]} \, .
    \]
It follows from the definitions that one can scale the rows of $\cA$ without altering $\ker(\cA)$, and positively scale the columns of $\cA$ without altering $\sep(\cA)$.
\end{definition}

When $\cA$ is a matrix of singletons, it corresponds to a set of $n$ points in $d$-dimensional space.
In this case, we have the following geometric intuition behind these definitions. 
\begin{proposition} \label{prop:ker+sep+geometry}
    Let $\HH$ be an ordered hyperfield where $T = \{\bp_1, \dots, \bp_n\} \subseteq \HH^d$ and $\bq \in \HH^d$.
    Consider the $(d+1) \times (n+1)$ matrix
    \begin{equation} \label{eq:homog}
    \cA = \begin{pmatrix}
    \bp_1 & \cdots & \bp_n & -\bq \\
    \1 & \cdots & \1 & -\1 \\
    \end{pmatrix} \, .
    \end{equation}
    \begin{enumerate}
        \item $\ker(\cA) \neq \emptyset$ if and only if $\bq \in \conv(T)$,
        \item $\sep(\cA) \neq \emptyset$ then there exists an open halfspace $\HS(\phi)$ such that $\conv(T) \subseteq \HS(\phi)$ and $\bq \notin \HS(\phi)$.
    \end{enumerate}
\end{proposition}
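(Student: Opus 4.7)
The plan is to unpack the definitions of $\ker(\cA)$ and $\sep(\cA)$ for the specific homogenised matrix $\cA$ in~\eqref{eq:homog}, and directly translate each condition into an affine statement about $T$ and $\bq$.

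For part (1), I take a nonzero element $(\lambda_1, \ldots, \lambda_n, \mu) \in \ker(\cA)$ with each entry in $\HH^+ \cup \{\0\}$. The last row of $\cA$ gives the condition $\0 \in \bigboxplus_{j=1}^n \lambda_j \boxplus -\mu$, equivalently $\mu \in \bigboxplus_{j=1}^n \lambda_j$ by reversibility. Since $\HH^+$ is closed under hyper-addition, if at least one $\lambda_j$ is strictly positive then this forces $\mu \in \HH^+$; and if all $\lambda_j = \0$ then the sum is $\{\0\}$, forcing $\mu = \0$ and contradicting nonvanishing of $\blambda$. So $\mu \in \HH^+$, and dividing through by $\mu$ yields scalars $a_j = \lambda_j \odot \mu^{-1} \in \HH^+ \cup \{\0\}$ with $\1 \in \bigboxplus_j a_j$. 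The first $d$ rows of $\cA$ read (again by reversibility) $q_i \in \bigboxplus_j a_j \odot p_{ij}$, so $\bq \in \bigboxplus_j a_j \odot \bp_j$; discarding indices with $a_j = \0$ produces a genuine convex combination, giving $\bq \in \conv(T)$ via Lemma~\ref{lem:conv+comb}. The converse simply reverses the construction: a convex expression $\bq \in \bigboxplus_j a_j \odot \bp_j$ with $\1 \in \bigboxplus_j a_j$ lifts to $(a_1, \ldots, a_n, \1) \in \ker(\cA)$.

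For part (2), given $\balpha = (\alpha_1, \ldots, \alpha_d, \beta) \in \sep(\cA)$, I package its entries into the affine polynomial $\phi = \alpha_1 \odot X_1 \boxplus \cdots \boxplus \alpha_d \odot X_d \boxplus \beta \in \HH[\bX]$. The $j$-th separation condition for $j \leq n$ reads exactly $\phi(\bp_j) \subseteq \HH^+$, so each $\bp_j \in \HS(\phi)$. The final ($j = n{+}1$) condition reads $\bigboxplus_i \alpha_i \odot (-q_i) \boxplus -\beta \subseteq \HH^+$, which, after using that negation swaps $\HH^+$ and $\HH^-$, is equivalent to $\phi(\bq) \subseteq \HH^-$, ruling out $\bq \in \HS(\phi)$. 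Invoking Corollary~\ref{lem:aff-HS-conv}, $\HS(\phi)$ is convex, hence contains $\conv(T)$, completing the separation.

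Both claims reduce to careful bookkeeping with reversibility plus the already-established convexity of open halfspaces, so I do not anticipate serious obstacles. The one subtle point is ruling out the degenerate case $\mu = \0$ in the kernel argument: this is precisely where the appended row of $\1$'s in the homogenisation $\cA$ does its work, ensuring that the kernel element actually encodes an affine (rather than conic) combination.
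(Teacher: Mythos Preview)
Your proof is correct and follows essentially the same approach as the paper: unpack the kernel/separation conditions row by row, use reversibility to rewrite $\0 \in x \boxplus -y$ as $y \in x$, rule out $\mu = \0$ via closure of $\HH^+$ under hyper-addition, and rescale by $\mu^{-1}$ to obtain a convex combination (and for part~(2), read off the affine polynomial directly from $\balpha$). The only difference is cosmetic: you spell out the $\mu \neq \0$ argument and the appeal to convexity of $\HS(\phi)$ in slightly more detail than the paper does.
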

\begin{proof}
    For the first claim, $\ker(\cA) \neq \emptyset$ if and only if there exists $\blambda \in (\HH^+ \cup \{\0\})^{n+1}$ such that
    \begin{align*}
    \0 &\in \left(\bigboxplus_{j=1}^n \lambda_j \odot p_{ij}\right) \boxplus \lambda_{n+1} \odot -q_i  & &\Longleftrightarrow & \lambda_{n+1} \odot q_i &\in \bigboxplus_{j=1}^n \lambda_j \odot p_{ij} \quad , \quad \forall i \in [d] \, \\
    \0 &\in \left(\bigboxplus_{j=1}^n \lambda_j \right) \boxplus -\lambda_{n+1} & &\Longleftrightarrow & \lambda_{n+1} &\in \bigboxplus_{j=1}^n \lambda_j \, .
    \end{align*}
    Note that $\lambda_{n+1} \neq \0$, as this would imply all other $\lambda_j = \0$.
    Scaling the latter equations by $\lambda_{n+1}^{-1}$ precisely gives the condition that $\bq \in \conv(T)$.

    For the second claim, $\sep(\cA) \neq \emptyset$ implies there exists $\balpha \in \HH^{d+1}$ such that
    \begin{align*}
    \left(\bigboxplus_{i=1}^d \alpha_i\odot p_{ij}\right) \boxplus \alpha_{d+1} &\subseteq \HH^+ \quad \forall j \in [n] \\
    \left(\bigboxplus_{i=1}^d \alpha_i\odot q_i\right) \boxplus \alpha_{d+1} &\subseteq \HH^- \, .
    \end{align*}
    Therefore the affine polynomial $\phi = (\bigboxplus_{i=1}^d \alpha_i \odot X_i) \boxplus \alpha_{d+1}$ defines a separating open halfspace.
\end{proof}
\begin{remark}
The second part of Proposition~\ref{prop:ker+sep+geometry} is not if and only if, as we just require a polynomial $\phi$ such that $\phi(\bq) \nsubseteq \HH^+$.
For stringent hyperfields, this includes $\phi$ such that $\0 \in \phi(\bq)$.
For non-stringent hyperfields, this includes many more polynomials as $\phi(\bq)$ could have positive and negative elements, see Example~\ref{ex:Non-stringent-quo-halfspaces}.
\end{remark}

\begin{proposition}[Weak Farkas]\label{prop:weak+duality}
    Let $\HH$ be an ordered hyperfield and $\cA \in \cR(\HH)^{d\times n}$.
    At most one of $\ker(\cA)$ and $\sep(\cA)$ are non-empty.
\end{proposition}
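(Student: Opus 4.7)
The plan is a direct contradiction argument by comparing two orders of a double sum. Suppose both $\ker(\cA)$ and $\sep(\cA)$ are nonempty, and fix $\blambda \in \ker(\cA)$ and $\balpha \in \sep(\cA)$. Consider
\[
\Sigma \;:=\; \bigboxplus_{i=1}^{d} \bigboxplus_{j=1}^{n} \lambda_j \odot \alpha_i \odot A_{ij} \;=\; \bigboxplus_{j=1}^{n} \bigboxplus_{i=1}^{d} \lambda_j \odot \alpha_i \odot A_{ij},
\]
the equality coming from associativity and commutativity of $\boxplus$. I will show the first expression forces $\0 \in \Sigma$ while the second forces $\Sigma \subseteq \HH^+$.

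For the first expression, the kernel hypothesis gives $\0 \in \bigboxplus_j \lambda_j \odot A_{ij}$ for each $i$; distributivity together with $\alpha_i \odot \0 = \0$ then yields $\0 \in \bigboxplus_j \lambda_j \odot \alpha_i \odot A_{ij}$ for each $i$. Because $\0 \boxplus \cdots \boxplus \0 = \{\0\}$, picking zero from each inner sum puts $\0$ into the outer sum, so $\0 \in \Sigma$.

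For the second expression, the separation hypothesis gives $\bigboxplus_i \alpha_i \odot A_{ij} \subseteq \HH^+$ for each $j$. Multiplying by $\lambda_j$, the inner sum $\bigboxplus_i \lambda_j \odot \alpha_i \odot A_{ij}$ is contained in $\HH^+$ when $\lambda_j \in \HH^+$ (by $\HH^+ \odot \HH^+ \subseteq \HH^+$), and equals $\{\0\}$ when $\lambda_j = \0$. Because $\blambda \ne \underline{\0}$, at least one index $j^\ast$ has $\lambda_{j^\ast} \in \HH^+$, so that summand sits strictly inside $\HH^+$. Additive closure $\HH^+ \boxplus \HH^+ \subseteq \HH^+$ together with the identity $\0 \boxplus a = \{a\}$ then give $\Sigma \subseteq \HH^+$. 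This contradicts $\0 \in \Sigma$ since $\HH^+ \cap \{\0\} = \emptyset$.

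There is no serious obstacle here; each ordered-hyperfield axiom is invoked essentially once, and no additional hypothesis on $\HH$ is needed. In particular, stringency, density of the ordering, and the factor-hyperfield structure are all irrelevant for this direction. The substantive content of the Farkas programme lies in the converse direction, which is what drives the genericity hypotheses in Theorem~\ref{thm:separation}.
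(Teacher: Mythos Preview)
Your proof is correct and takes essentially the same approach as the paper: both assume $\ker(\cA)$ and $\sep(\cA)$ are nonempty and derive a contradiction by computing the double sum $\bigboxplus_{i,j}\lambda_j\odot\alpha_i\odot A_{ij}$ in two orders, using the kernel condition to show it contains $\0$ and the separation condition together with closure of $\HH^+$ to show it lies in $\HH^+$. The paper compresses this into a single chain of containments, while you separate the two directions more explicitly, but the argument is the same.
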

\begin{proof}
    Suppose $\ker(\cA)$ is non-empty, then there exists $\blambda \in (\HH^+ \cup \{\0\})^n$ such that $\0 \in \bigboxplus_{j=1}^n \lambda_j \odot A_{ij}$ for all $j \in [n]$.
    Assume $\sep(\cA)$ is also non-empty and pick $\balpha \in \sep(\cA)$.
    Then for all $j \in [n]$ we have $\bigboxplus_{i=1}^d \alpha_i \odot A_{ij} \subseteq \HH^+$.
    As at least one $\lambda_j \neq \0$ and $\HH^+$ is closed under addition and multiplication, we have
    \begin{align*}
        \HH^+ \supseteq \bigboxplus_{j=1}^n \lambda_j \odot \left(\bigboxplus_{i=1}^d \alpha_i \odot A_{ij}\right)  
        = \bigboxplus_{i=1}^d \bigboxplus_{j=1}^n \alpha_i \odot \lambda_j \odot A_{ij} 
        = \bigboxplus_{i=1}^d \alpha_i \odot \underbrace{\left(\bigboxplus_{j=1}^n \lambda_j \odot A_{ij}\right)}_{\0 \, \in} \ni \0 \, .
    \end{align*}
    This is a contradiction, therefore $\sep(\cA)$ must be empty.
\end{proof}
We emphasise that the previous proposition holds for any ordered hyperfield.
However, to prove the strong version we require much stricter conditions.

\begin{theorem}[Farkas Lemma]\label{thm:strong+duality}
    Let $\HH = \SS \rtimes G$ be a stringent ordered hyperfield with dense ordering \added{and $\cA \in \cR(\HH)^{d\times n}$}.
    Then exactly one of $\ker(\cA)$ and $\sep(\cA)$ are non-empty.
\end{theorem}
\begin{proof}
    We proceed by induction on $d$, the number of variables in the underlying system.
    Note that Proposition~\ref{prop:weak+duality} implies that if $\sep(\cA)$ is non-empty then $\ker(\cA)$ is empty, therefore we assume that $\sep(\cA)$ is empty.
    
    When $d=1$, $\sep(\cA)$ is empty if and only if not all $A_j$ are simultaneously positive or negative.
    If any $A_j$ is a set, then $\0 \in \lambda_j \odot A_j$ for any $\lambda_j \neq \0$, and so $\ker(\cA)$ is non-empty.
    If all $A_j$ are singletons, there exists two elements $A_j, A_k \in \cA$ with opposite signs, in which case 
    \[
    \0 \in A_j\odot A_k \boxplus -A_k \odot A_j \, ,
    \]
    i.e. $\ker(\cA)$ is non-empty.
    
    For the general case, without loss of generality we scale the elements of $\cA$ such that $A_{dj} \in \{\1,-\1,\0,\1 \boxplus-\1\}$ for all $j \in [n]$.
    By Proposition~\ref{prop:strict+fm}, the system of inequalities induced by $\cA$ has no solution if and only if the system of inequalities induced by $\tilde{\cA}$ has no solution.
    By induction, this implies that there exists some $\tilde{\blambda} \in \ker(\tilde{\cA})$ whose entries are indexed by $(J^{+\bullet} \times J^{-\bullet}) \cup J^\0$.
    We define $\blambda \in (\HH^+ \cup \{\0\})^n$ as
    \[
    \lambda_j = \begin{cases}
        \bigboxplus_{k \in J^{-\bullet}} \tilde{\lambda}_{(j,k)} & j \in J^+ \\
        \bigboxplus_{k \in J^{+\bullet}} \tilde{\lambda}_{(k,j)} & j \in J^- \\
        \bigboxplus_{k \in J^{-\bullet}} \tilde{\lambda}_{(j,k)} \boxplus \bigboxplus_{k \in J^{+\bullet}} \tilde{\lambda}_{(k,j)} & j \in J^\bullet \\
        \tilde{\lambda}_j & j \in J^\0 
    \end{cases} \, ,
    \]
    and claim that $\blambda \in \ker(\cA)$.
    We note that for $1 \leq i \leq d-1$, we have
    \begin{align*}
        \0 \in&\bigg(\mspace{-10mu}
        \bigboxplus_{\substack{(j,k) \in \\ J^{+\bullet} \times J^{-\bullet}}} \mspace{-10mu}
        \tilde{\lambda}_{(j,k)} \odot \tilde{A}_{i,jk}\bigg) \boxplus \bigg(\bigboxplus_{j \in J^\0}\tilde{\lambda_j} \odot A_j \bigg) \\
        =&
        \bigg(\mspace{-10mu}\bigboxplus_{\substack{(j,k) \in \\ J^{+\bullet} \times J^{-\bullet}}} \mspace{-10mu}\tilde{\lambda}_{(j,k)} \odot (A_{ij} \boxplus A_{ik})\bigg) \boxplus \bigg(\bigboxplus_{j \in J^\0}\tilde{\lambda_j} \odot A_j\bigg) \\
        =&\bigg(\bigboxplus_{j \in J^+} \Big(\bigboxplus_{k \in J^{-\bullet}} \tilde{\lambda}_{(j,k)}\Big) \odot A_{ij}\bigg) \boxplus
        \bigg(\bigboxplus_{j \in J^-} \Big(\bigboxplus_{k \in J^{+\bullet}} \tilde{\lambda}_{(k,j)}\Big) \odot A_{ij}\bigg) \boxplus \\
        & \mspace{50mu}\bigg(\bigboxplus_{j \in J^\bullet} \Big(\bigboxplus_{k \in J^{-\bullet}} \tilde{\lambda}_{(j,k)} \boxplus \bigboxplus_{k \in J^{+\bullet}} \tilde{\lambda}_{(k,j)}\Big) \odot A_{ij}\bigg) \boxplus \bigg(\bigboxplus_{j \in J^\0}\tilde{\lambda_j} \odot A_j\bigg) \\
        =& \bigboxplus_{j=1}^n \lambda_{j} \odot A_{ij} \, .
    \end{align*}
    Remains to check that $\0 \in \bigboxplus_{j} \lambda_j \odot A_{dj}$.
    \begin{align*}
        \bigboxplus_{j=1}^n \lambda_j \odot A_{dj} &= \bigg(\bigboxplus_{j \in J^+} \lambda_j\bigg) \boxplus \bigg(\bigboxplus_{j \in J^-} -\lambda_j\bigg) \boxplus \bigg(\bigboxplus_{j \in J^\bullet} (\lambda_j\boxplus - \lambda_j)\bigg) \\       
        =&\bigg(\mspace{-10mu}\bigboxplus_{\substack{(j,k) \in  \\ J^+ \times J^{-\bullet}}}\mspace{-10mu} \tilde{\lambda}_{(j,k)}\bigg) \boxplus
        \bigg(\mspace{-10mu}\bigboxplus_{\substack{(j,k) \in \\ J^{+\bullet}\times J^-}}\mspace{-10mu} -\tilde{\lambda}_{(j,k)}\bigg) \boxplus
        \bigg(\mspace{-10mu}\bigboxplus_{\substack{(j,k) \in \\ J^\bullet \times J^{-\bullet}}}\mspace{-10mu} \tilde{\lambda}_{(j,k)}\boxplus - \tilde{\lambda}_{(j,k)}\bigg) \boxplus 
        \bigg(\mspace{-10mu}\bigboxplus_{\substack{(j,k) \in \\ J^{+\bullet} \times J^\bullet}}\mspace{-10mu} \tilde{\lambda}_{(j,k)}\boxplus -\tilde{\lambda}_{(j,k)}\bigg)  \\
        =& \bigboxplus_{\substack{(j,k) \in  \\ J^{+\bullet} \times J^{-\bullet}}}\mspace{-10mu} (\tilde{\lambda}_{(j,k)} \boxplus -\tilde{\lambda}_{(j,k)}) \boxplus \bigboxplus_{\substack{(j,k) \in  \\ J^+ \times J^\bullet}}\mspace{-10mu} \tilde{\lambda}_{(j,k)} \boxplus \bigboxplus_{\substack{(j,k) \in  \\ J^\bullet \times J^-}}\mspace{-10mu} -\tilde{\lambda}_{(j,k)}
    \end{align*}
We finally note that over $\SS \rtimes G$, we have $a \boxplus -a \boxplus a = a \boxplus -a$.
Using this identity, we can kill the second and third summation in the previous equation.
This gives
\begin{equation} \label{eq:d+kernel}
\0 \in \bigboxplus_{\substack{(j,k) \in  \\ J^{+\bullet} \times J^{-\bullet}}}\mspace{-10mu} (\tilde{\lambda}_{(j,k)} \boxplus -\tilde{\lambda}_{(j,k)}) = \bigboxplus_{j=1}^n \lambda_j \odot A_{dj} \, .
\end{equation}
\end{proof}

\begin{proof}[Proof of Theorem~\ref{thm:separation}]
When $\HH = \SS\rtimes G$, combining Theorem~\ref{thm:strong+duality} with Proposition~\ref{prop:ker+sep+geometry} gives the result.

Let $\HH = \FF \rtimes G$; note that an ordering on an ordered field is always dense, and so the ordering on $\HH$ is also dense. 
    Let $\cA$ be as the matrix \eqref{eq:homog}.
    We observe that the proof of Theorem~\ref{thm:strong+duality} holds for $\HH$ up until final paragraph, where we have terms labelled by  $J^+ \times J^\bullet$ and $J^\bullet \times J^-$ that in general will not cancel.
    We say $\cA$ is \emph{sufficiently generic} if there exists an ordering of the rows of $\cA$ such that applying Fourier-Motzkin elimination to the rows in this order results in $J^\bullet = \emptyset$ at every iteration, i.e. we do not get cancellation between terms.
    As $J^\bullet$ remains empty, equation~\eqref{eq:d+kernel} holds for each iteration, and so exactly one of $\ker(\cA)$ and $\sep(\cA)$ holds.
    Applying Proposition~\ref{prop:ker+sep+geometry} gives the desired result.
\end{proof}

\added{
\begin{example}
    Let $\HH := \RR \rtimes \ZZ$ and recall from Example~\ref{ex:RxZ2} the convex set $\conv(\bp, -\bp) \subseteq \HH^2$ where $\bp = [(-1,0), (1,0)] \in \HH^2$, and the point $\bq = [(1,0), (1,0)]$.
    As we saw in Example~\ref{ex:RxZ+sep}, $\bq$ is not contained in $\conv(\bp, -\bp)$ and can be separated by an open halfspace.
    We can find this open halfspace by Fourier-Motzkin elimination.
    Recall the system of inequalities over $\RR \rtimes \ZZ$ from Example~\ref{ex:fm}.
    Its defining matrix $\cA$ is precisely the matrix \eqref{eq:homog}, hence each solution to the inequalities gives a separating halfspace by Proposition~\ref{prop:ker+sep+geometry}.
    We calculated in Example~\ref{ex:fm} that this system has the non-empty solution set $\cS_\succ(\cA)$ described in \eqref{eq:ex+solution+set}.
    In particular, the point $[(-1,0),(-1,0),(k,0)]$ is contained in $\cS_\succ(\cA)$ for $k \in (0,2)$, which corresponds to the separating halfspace in Example~\ref{ex:RxZ2}.

    We note that this matrix $\cA$ is not sufficiently generic, as there is a step within the Fourier-Motzkin elimination algorithm in Example~\ref{ex:fm} where $J^\bullet \neq \emptyset$.
    It is also easy to check that any reordering of the rows would still result in $J^\bullet \neq \emptyset$ at some step.
    However, if we perturb $\bq$ to $\bq' = [(1+\alpha, 0), (1 + \beta, 0)]$ for $\alpha, \beta \in \RR \setminus \{-2,-1,0\}$, the resulting matrix
    \[
    \cA' =  \begin{pmatrix}
    \bp_1 & \cdots & \bp_n & -\bq' \\
    \1 & \cdots & \1 & -\1 \\
    \end{pmatrix}
    = \begin{pmatrix}
    (-1,0) & (1,0) & (-1-\alpha,0)   \\
     (1,0) & (-1,0) & (-1-\beta,0)  \\
     (1,0) & (1,0) & (-1,0)  \\
    \end{pmatrix} \, .
    \]
    is sufficiently generic.
    Recomputing Example~\ref{ex:fm} with this matrix, it is easy to check that the Fourier-Motzkin algorithm has $J^\bullet = \emptyset$ at every step.
\end{example}
}


\added{
\begin{remark}
    An ordered field $\FF$ can be written as the stringent hyperfield $\FF \cong \FF \rtimes G$ where $G$ is the trivial group.
    Furthermore, any matrix $\cA$ over this ordered field satisfies our definition of sufficiently generic: $J^\bullet = \emptyset$ at every step of Fourier-Motzkin elimination, as additive inverses will genuinely cancel to zero rather than leaving a set.
    As such, Theorem~\ref{thm:separation} does recover the Hyperplane Separation Theorem for arbitrary ordered fields.
    Similarly, the observation in its proof that Theorem~\ref{thm:strong+duality} holds for sufficiently generic matrices over $\FF \rtimes G$ allows us to also recover the Farkas Lemma for arbitrary ordered fields.
\end{remark}
}

\section{Applications and further questions}
We end with some avenues for further research and applications of hyperfield convexity.

\subsection{Vectors of matroids over ordered hyperfields}

Much of the recent interest in hyperfields has been motivated by the introduction of \emph{matroids over hyperfields} as pioneered by~\cite{BB}.
Key examples include $\SS$-matroids which are equivalent to \emph{oriented matroids}, and $\TT\RR$-matroids which are equivalent to \emph{oriented valuated matroids}.
We will not formally define them, but instead give informal intuition and motivation where required; see~\cite{BB, LA} for details.

Given a hyperfield $\HH$, a $\HH$-matroid $\cM$ can be defined in terms of its \emph{cocircuits} $\cC^*(\cM) \subseteq \HH^d$, a minimal collection of $d$-tuples satisfying an elimination axiom.
The set of \emph{vectors} $\cV(\cM) \subseteq \HH^d$ of $\cM$ is the set of elements orthogonal to the cocircuits of $\cM$, i.e.
\[
X \in \cV(\cM) \, \Leftrightarrow  \, \0 \in \bigboxplus_{i=1}^d X_i \odot Y_i \quad \forall Y \in \cC^*(\cM) \, .
\]
This definition of vectors is entirely dependent on the cocircuits of $\cM$.
An independent axiom system for vectors was given in~\cite{LA} in terms of reduced echelon forms of support bases that holds for all hyperfields $\HH$.
However, it is very distinct from existing vector axiom systems in the literature, particularly for vectors of oriented matroids.
We recall two examples of vector axioms \added{for} $\SS$ and $\TT\RR$, before considering how one may generalise this to matroids over all ordered hyperfields.

\begin{example}\label{ex:om+composition}
The sign hyperfield has the following \emph{composition operation} $\circ$ on it defined as follows:
\[
\circ \colon \SS \times \SS \rightarrow \SS \, , \quad a \circ b = \begin{cases}
a & a \neq \0 \\
b & \text{otherwise}
\end{cases} \, .
\]
Composition extends component-wise to $\SS^d$.
Moreover, it allows us to characterise vectors of oriented matroids~\cite[Theorem 3.7.5]{BLSWZ}: a subset $\cV \subseteq \SS^d$ is the set of vectors of an $\SS$-matroid if and only if
\begin{enumerate}[label=(V\arabic*)]
    \item $\0 \in \cV$, \label{cond:om+zero}
    \item If $X \in \cV$ then $aX \in \cV$ for all $a \in \SS$, \label{cond:om+scalar}
    \item If $X, Y \in \cV$ then $X \circ Y \in \cV$, \label{cond:om+composition}
    \item If $X, Y \in \cV$ with $X_i = -Y_i$, there exists $Z \in \cV$ with $Z \in X \boxplus Y$ and $Z_i = \0$. \label{cond:om+elimination}
\end{enumerate}
Let us give some geometric intuition behind these axioms in terms of real linear spaces.
Let $\cL \subseteq \RR^d$ be a linear space, the image of this linear space in the sign map is the set of vectors $\cV(M) = \sgn(\cL)$ of some oriented matroid $\cM$.
Axioms~\ref{cond:om+zero} and~\ref{cond:om+scalar} follow immediately from linear spaces being closed under scalar multiplication, while axioms~\ref{cond:om+composition} and~\ref{cond:om+elimination} can be reframed in terms of convexity of real linear spaces.

Consider $\bx, \by \in \cL$ and consider the convex line segment connecting them.
Axiom~\ref{cond:om+composition} says composition describes how the sign data changes if we take an infinitesimal step from $\bx$ to $\by$ along this line segment.
Explicitly, for some sufficiently small $\epsilon > 0$, the point $(1-\epsilon)\bx + \epsilon\by$ has sign
\[
\sgn((1-\epsilon)\bx + \epsilon\by) = \sgn(\bx) \circ \sgn(\by) \, .
\]
As such, we can consider $X \circ Y$ as the perturbation of $X$ in the direction of $Y$, and that axiom~\ref{cond:om+composition} states this must stay within $\cV$.
Axiom~\ref{cond:om+elimination} states that if a coordinate hyperplane separates $\bx$ and $\by$, the line segment between them must intersect that hyperplane.
Explicitly, if $\sgn(x_i) = -\sgn(y_i)$, then setting $\lambda = |x_i|/(|x_i| + |y_i|) \in \RR_{> 0}$, we have $\bz = (1-\lambda)\bx + \lambda \by \in \cL$ with $z_i = 0$ and $\sgn(z_j) \in \sgn(x_j) \boxplus \sgn(y_j)$ for all other coordinates.
\end{example}

\begin{example}\label{ex:trop+composition}
Define the operation
\[
    \lhd \colon \TT\RR \times \TT\RR \rightarrow \TT\RR \quad , \quad a \lhd b = \begin{cases}
        a & |a| = |b| \\ a \boxplus b & \text{otherwise}
    \end{cases}
\]
and extend componentwise to a composition operation on $\TT\RR^d$.
\cite{LS} recently showed that this leads to a vector axiom system for $\TT\RR$-matroids analogous to $\SS$-matroids, where $\lhd$ plays the role of the composition.
The intuition for this can again be viewed as perturbing vectors over a field, namely $\RR[[t^\RR]]$ (or Hahn series over any ordered field).
Let $\bx, \by \in \RR[[t^\RR]]^d$ be vectors in some linear space $\cL \subseteq \RR[[t^\RR]]^d$, we again consider a `small' step $(1-\epsilon)\bx + \epsilon \by$ along the line segment between them.
However, $\RR[[t^\RR]]$ is a non-Archimedean field and so which $\epsilon$ to perturb by is slightly more subtle.
We let $\epsilon = 1/N > 0$ for some arbitrarily large $N \in \NN$: while this is an arbitrarily small real number, it is still larger than any Hahn series with negative leading power.
As such, it is a straightforward check to see that
\[
\sval((1- \epsilon)\bx + \epsilon\by) = \sval(\bx) \lhd \sval(\by) \, .
\]
\end{example}

It is not known how to generalise the composition axiom~\ref{cond:om+composition} to arbitrary hyperfields.
As a partial step towards solving this problem,~\cite{LA} defines a composition operation on a general hyperfield $\HH$ as a hyperoperation $\circ$ on $\HH^d$ such that
    \begin{enumerate}
        \item For every $X, Y \subseteq \HH^d$ and $Z \in X \circ Y$, we have $\underline{Z} = \underline{X} \cup \underline{Y}$,
        \item If $X, Y \in V(\phi)$ for some linear polynomial $\phi$, then $X \circ Y \subseteq V(\phi)$.
    \end{enumerate}
Both of the operations seen in Examples~\ref{ex:om+composition} and \ref{ex:trop+composition} are composition operations, and their definition is underpinned by the convexity of their corresponding linear spaces.
Motivated by this, we ask whether it is possible to define composition operations for matroids over ordered hyperfields using the language of convexity.

\begin{question}
    Can we define a composition operation $\circ_\HH$ for any ordered hyperfield $\HH$ where $\bx \circ_\HH \by$ is a collection of points on the convex line segment between $\bx$ and $\by$?
\end{question}

Note that while composition was a single-valued operation for both $\SS$ and $\TT\RR$, we will likely need to move to multi-valued operations for non-stringent hyperfields.

\subsection{Closed halfspace separation for stringent polyhedra}\label{sec:exterior}
Ordinary polyhedra can be characterised as the intersection of finitely many closed halfspaces.
This is the first step to many of the nice properties that ordinary polyhedra enjoy, such as faces and face lattices.
As such, it is a natural property to want. 

Over $\TT\RR$,~\cite{LV} showed that signed tropical polyhedra are the intersection of finitely many closed halfspaces.
However, their proof uses a combination of Fourier-Motzkin elimination of non-strict inequalities and technical lemmas that rely on $G = \RR$ being a complete ordered abelian group i.e. every bounded subset of $G$ has a well-defined supremum and infimum.
Many technical lemmas that work for strict Fourier-Motzkin elimination break down for non-strict over hyperfields of the form $\FF \rtimes G$.
Moreover, the only complete ordered abelian groups are $\ZZ$ and $\RR$, and so their proof does not extend to other hyperfields of the form $\SS \rtimes G$.

An alternative approach is to prove separation results over $\HH = \FF/U$ by lifting to the ordered field $\FF$ and separating there.
As a brief demonstration of this method, we show one can separate convex sets over the sign hyperfield with closed halfspaces; recall that this was not possible with open halfspaces.

\begin{proposition}
    Let $\conv(T) \subseteq \SS^d$ be a (finitely generated) convex set and $\bp \notin \conv(T)$.
    There exists a closed halfspace $\cHS(\phi) \subseteq \SS^d$ such that $\conv(T) \subseteq \cHS(\phi)$ and $\bp \notin \cHS(\phi)$.

    Therefore, every (finitely generated) convex set in $\SS^d$ is the intersection of (finitely many) closed halfspaces.
\end{proposition}
\begin{proof}
    Note that there only finitely many points in $\SS^d$ and so all convex sets over $\SS^d$ are finitely generated.
    As such, we can assume that $T = \conv(T)$.
    
    Consider the quotient map $\tau \colon \RR \rightarrow\SS$.
    Let $S = \conv(\Tilde{T}) \subseteq \RR^d$, where $\Tilde{T}$ is a choice of lifts $\Tilde{\bq} \in \tau^{-1}(\bq)$ for each $\bq \in T$; observe that $S$ is a convex polyhedral set.
    Moreover, by the assumption that $T = \conv(T)$ coupled with Proposition~\ref{prop:cone+containment}, we have that $\tau(S) = \conv(T)$.
    Let $P = \tau^{-1}(\bp) \subseteq \RR^d$; by Lemma~\ref{lem:pre-conv-conv} this is a convex set.
    Moreover, it is relatively open: it is full-dimensional in the linear space $L = \SetOf{\bx \in \RR^d}{x_i = 0 \text{ if } p_i = \0}$ and cut out by open halfspaces $\SetOf{\bx \in \RR^d}{\pm x_j > 0}$ where $p_j = \pm \1$.
    By~\cite[Theorem 20.2]{Rockafellar}, we can always strictly separate a polyhedral and relatively open convex set over $\RR$.
    This implies there exists $\cHS(\psi) \subseteq \RR^d$ where $\psi \in \RR[\bX]$ such that $S \subseteq \cHS(\psi)$ but $P \cap \cHS(\psi) = \emptyset$.

    Let $\psi = \tilde{c}_0 + \tilde{c}_1X_1 + \cdots +\tilde{c}_dX_d$ with  $\phi$ its pushforward $\phi = \tau_*(\psi) = c_0 \boxplus c_1\odot X_1 \boxplus \cdots \boxplus c_d\odot X_d$.
    We first note that using Theorem~\ref{clsd_hs_eql}, having $S \subseteq \cHS(\psi)$ implies that
    \[
    \conv(T) = \tau(S) \subseteq \tau(\cHS(\psi)) = \cHS(\phi) \, .
    \]
    It remains to show that $\bp \notin \cHS(\phi)$.

    Suppose there exists $i \in [d]$ such that $c_i \odot p_i = \1$.
    We construct an element of the preimage $\Tilde{\bp} \in P$ as follows:
    \[
    \Tilde{\bp}_i = \frac{(d+1)|\Tilde{c}_0|}{|\Tilde{c}_i|} \quad , \quad \Tilde{\bp}_j = \sgn(p_j)\frac{|\Tilde{c}_0|}{|\Tilde{c}_j|} \, , \, \forall j \neq i \, .
    \]
    Then evaluating $\psi$ at $\tilde{\bp}$ gives
    \begin{align*}
        \psi(\tilde{\bp}) = (d+1)|\tilde{c}_0| + \sum_{j \neq i} \sgn(c_j)\sgn(p_j)|\tilde{c}_0| + \tilde{c}_0 \geq |\tilde{c}_0| \geq 0 \, ,
    \end{align*}
    implying that $\tilde{\bp} \in \cHS(\psi)$.
    This is a contradiction, and so $c_i \odot p_i \neq \1$ for all $i$.
    This implies $\phi(\bp)$ is a sum of $\0$ and $-\1$, at least one of which is $-\1$, and so $\bp \notin \cHS(\phi)$.
\end{proof}

Generalising this method to other stringent hyperfields is currently out of reach.
While the lifted sets are convex, they do not appear to have any nice structure e.g. semialgebraic.
It is not possible to separate arbitrary convex sets over $\FF$ (see~\cite{RR:91}), and so it remains unknown whether these sets are separable.

\subsection{Applications of convexity over \texorpdfstring{$\RR \rtimes \RR$}{R x R}}
As discussed in the introduction, one motivation for tropical convexity has been its applications to complexity of linear programming.
Explicitly, one can study families of linear programs over (generalized) real Puiseux series $\RR[[t^{\RR}]]$ by applying the valuation map $\val$ and studying a single linear program over $\TT$.
As discussed in Example~\ref{ex:tropical+quotients}, one can enrich this valuation map to recall additional data, leading to the signed valuation $\sval$ and the fine valuation $\fval$.
These maps are related by the following commutative diagram
\[
\begin{tikzcd}[column sep=huge]
    \RR[[t^\RR]] \arrow[r, "\fval"] \arrow[rd, "\sval"] \arrow[rdd, "\val"'] & \RR \rtimes \RR \ar[d]\\
    & \TT\RR \ar[d]\\
    & \TT
\end{tikzcd}
\]
where the homomorphism $\RR \rtimes \RR \rightarrow \TT\RR$ sends $(c, g)$ to $(\sgn(c),g)$, and
the homomorphism $\TT\RR \rightarrow \TT$ sends $(\sgn(c),g)$ to $g$.

With these refined valuation maps, this opens the door to studying more refined families of linear programs over $\RR[[t^\RR]]$ by studying a single linear program over $\TT\RR$ or $\RR \rtimes \RR$.
In particular, these hyperfields give greater control over cancellation of terms over the Puiseux series.
This was a key motivation to the study of convexity over $\TT\RR$~\cite{LV,LS}, but there has been no consideration of $\RR \rtimes \RR$ until now.
We believe a systematic study of convexity over this hyperfield may help lead to developments of convexity over $\TT$ and $\TT\RR$, as well as possible applications to convexity and linear programming complexity over $\RR[[t^\RR]]$.

\appendix
\section{Order relations on hyperfields} \label{sec:order+relations} 
Recall from Example~\ref{ex:field+ordering} that over a field $\FF$, its orderings $\FF^+$ are in bijection with strict total order relations $\prec$ that are compatible with the field operations.
The same does not hold for hyperfields, and we use this section to detail the ways in which this correspondence breaks down.
The message one should take away is that orderings are the correct objects to work with on hyperfields, not order relations.
For a general reference for results in order theory, see~\cite{BS:16}.

Given an ordering $\HH^+$, we define the associated relation $\prec_{\HH^+}$ by 
\[
a \prec_{\HH^+} b \, \Leftrightarrow \, b \boxplus -a \subseteq \HH^+ \, .
\]
This is a strict partial order on $\HH$ but is only a total order when the hyperfield is stringent.

\begin{proposition} \label{prop:partial+order}
    Let $\HH$ be an ordered hyperfield with ordering $\HH^+$, then $\prec_{\HH^+}$ is a strict partial order.
    Moreover, if $\HH$ is stringent then $\prec_{\HH^+}$ is a strict total order.
\end{proposition}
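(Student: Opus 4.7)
The plan is to verify the axioms of a strict partial order — irreflexivity and transitivity — directly from the definition and the ordering axioms, and then obtain totality under the stringency hypothesis via a short case analysis. Irreflexivity is immediate: if $a \prec_{\HH^+} a$, then $a \boxplus -a \subseteq \HH^+$, but $\0 \in a \boxplus -a$ by the inverse axiom, contradicting $\0 \notin \HH^+$.

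For transitivity, I would suppose $a \prec_{\HH^+} b$ and $b \prec_{\HH^+} c$, so both $b \boxplus -a$ and $c \boxplus -b$ lie in $\HH^+$. Since $\HH^+$ is closed under hyper-addition, the Minkowski-type sum $(c \boxplus -b) \boxplus (b \boxplus -a)$ is contained in $\HH^+$. Associativity rewrites this as $c \boxplus (-b \boxplus b) \boxplus -a$, and because $\0 \in -b \boxplus b$ the subset $c \boxplus \0 \boxplus -a = c \boxplus -a$ is contained in the full sum, hence in $\HH^+$. This gives $a \prec_{\HH^+} c$.

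For totality in the stringent case, fix distinct $a,b \in \HH$. Stringency says $b \boxplus -a$ is a non-singleton set only when $b = -(-a) = a$, so for $a \neq b$ the sum $b \boxplus -a = \{c\}$ is a singleton. Moreover $c \neq \0$: otherwise $\0 \in b \boxplus -a$, and uniqueness of additive inverses would force $-a = -b$, hence $a = b$. The trichotomy $\HH = \HH^+ \sqcup \{\0\} \sqcup \HH^-$ places $c$ in exactly one of $\HH^\pm$. If $c \in \HH^+$ we conclude $a \prec_{\HH^+} b$; if $c \in \HH^-$, reversibility yields $-c \in a \boxplus -b$, and the same singleton argument applied to $a \boxplus -b$ shows it equals $\{-c\} \subseteq \HH^+$, giving $b \prec_{\HH^+} a$.

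The main subtlety is the transitivity step. The field-level identity $(c - b) + (b - a) = c - a$ becomes set-valued over a hyperfield, since $-b \boxplus b$ typically contains many elements beyond $\0$, so one cannot hope to recover equality. The resolution is to observe that $c \boxplus -a$ merely \emph{sits inside} the larger sum $c \boxplus (-b \boxplus b) \boxplus -a$, which is already trapped in $\HH^+$; relying on $\0$ being just a member of $-b \boxplus b$ (rather than its sole member) is the characteristic hyperfield move. Stringency in the second part then plays the complementary role of collapsing set-valued sums back to singletons away from the additive-inverse locus, which is exactly what is needed to upgrade the partial order to a total one.
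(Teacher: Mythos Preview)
Your proof is correct and follows essentially the same route as the paper: irreflexivity via $\0 \in a \boxplus -a$, transitivity via the containment $c \boxplus -a \subseteq (c \boxplus -b) \boxplus (b \boxplus -a) \subseteq \HH^+$, and totality in the stringent case by reducing $b \boxplus -a$ to a nonzero singleton and invoking trichotomy. The only cosmetic differences are that the paper records asymmetry separately (which, as you implicitly rely on, already follows from irreflexivity plus transitivity) and phrases the totality argument contrapositively rather than directly.
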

\begin{proof}
    \item (Irreflexive) For any $a \in \HH$, we have $a \nprec a$ as $\0 \in a \boxplus -a \nsubseteq \HH^+$.
    \item (Asymmetric) If $a \prec b$ then $a \boxplus -b = -(b \boxplus -a) \subseteq -\HH^+ = \HH^-$, implying that $b \nprec a$.
    \item (Transitive) If $a \prec b$ and $b \prec c$, then $c\boxplus -a \subseteq (c\boxplus -b) \boxplus (b \boxplus -a) \subseteq \HH^+$, implying that $a \prec c$.
    \item (Total) Let $a,b \in \HH$ where $\HH$ is stringent.
    If both $a \nprec b$ and $b \nprec a$, then $b \boxplus -a$ must be a set overlapping both $\HH^+$ and $\HH^-$.
    As the only sets that occur in a stringent hyperfield are $c \boxplus -c$, this implies $a = b$.
\end{proof}

\begin{example}\label{ex:signed+tropical+order}
Consider the signed tropical hyperfield $\TT\RR$ with its unique ordering $\TT\RR^+ = \{(1,b) \mid b \in \RR\}$.
The order relation $\prec_{\TT\RR^+}$ is a strict total order on $\TT\RR$, equivalently defined as
\begin{equation}\label{eq:signed+tropical+order}
(a_1,b_1) \prec_{\TT\RR^+} (a_2,b_2) \, \Leftrightarrow \, \begin{cases}
    a_1 = -1 \text{ and } a_2 = 1 \\
    b_1 < b_2 \text{ and } a_1 = a_2 = 1 \\
    b_1 > b_2 \text{ and } a_1 = a_2 = -1 \\
\end{cases} \, .
\end{equation}
This is exactly the order relation on the signed tropical semiring~\cite{AGG:14,LV}.
\end{example}

Even if $\HH$ is not stringent, we always have $a \prec_{\HH_+} b$ for $a \in \HH^-$ and $b \in \HH^+$.
However, the following example shows that these may be the only comparable elements.

\begin{example}\label{ex:rational+quotient}
    Consider the field $\QQ$ and the quotient hyperfield $\HH = \QQ/(\QQ^\times)^2$ where $(\QQ^\times)^2$ is non-zero squares of $\QQ$.
    This hyperfield has a unique ordering, namely $\HH^+ = \QQ^+/(\QQ^\times)^2$.
    For any $a, b \in \QQ^+$, there exists squares $m, n \in (\QQ^\times)^2$ such that $m > a/b$ and $n > b/a$.
    As a result, these elements are incomparable under $\prec_{\HH^+}$ as
    \begin{align*}
        a\cdot 1 + (-b)\cdot m < a - \frac{a}{b}\cdot b = 0 \quad &\Rightarrow \quad b\cdot(\QQ^\times)^2 \nprec_{\HH_+} a\cdot(\QQ^\times)^2 \, , \\
        b\cdot 1 + (-a)\cdot n < b - \frac{b}{a}\cdot a = 0 \quad &\Rightarrow \quad a\cdot(\QQ^\times)^2 \nprec_{\HH_+} b\cdot(\QQ^\times)^2 \, .
    \end{align*}
    A similar argument works for $a, b \in \QQ^-$.
    As a result, the only comparable pairs of elements of $\HH$ are $\bar{a} \in \HH^-$ and $\bar{b} \in \HH^+$.
\end{example}

We note that $\prec_{\HH^+}$ is not compatible with addition even in the stringent case, as distinct elements can become equal after addition with another element.
For example, over $\TT\RR$
\[
(1,0) \prec_{\HH^+} (1,1) \quad \nRightarrow \quad (1,0) \boxplus (1,2) \prec_{\HH^+} (1,1) \boxplus (1,2) \, .
\]
Moreover, $a\boxplus c$ and $b \boxplus c$ may be sets and so one must extend $\prec$ to sets.
We attempt to rectify this in two ways.
Firstly, we can extend $\prec_{\HH^+}$ to a non-strict partial order by defining
\begin{align*}
a \preceq_{\HH^+} b &\Leftrightarrow a = b \text{ or } a \prec_{\HH^+} b \, , \\
&\Leftrightarrow \0 \in b \boxplus -a \text{ or } b \boxplus -a \subseteq \HH^+ \, .
\end{align*}
Secondly, we extend $\preceq_{\HH^+}$ to sets by saying that $A \preceq_{\HH^+} B$ if and only if $a \preceq_{\HH^+} b$ for all $a \in A$ and $b \in B$, or $A = B$.
The following result shows that restricted to stringent hyperfields, this gives a total order that is also compatible with addition.
\begin{proposition}
    Let $\HH$ be an ordered hyperfield with ordering $\HH^+$, then $\preceq_{\HH^+}$ is a non-strict partial order.
    Moreover, if $\HH$ is stringent then $\preceq_{\HH^+}$ is a total order that is compatible with addition.
\end{proposition}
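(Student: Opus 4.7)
My plan is to deduce the non-strict partial order axioms from the strict ones established in Proposition~\ref{prop:partial+order}, handle totality in the stringent case immediately, and then tackle compatibility with addition via the classification of Proposition~\ref{prop:ord+stringent+class}.

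First I verify the three partial order axioms. Reflexivity is immediate from the definition of $\preceq_{\HH^+}$. For antisymmetry, if $a \preceq_{\HH^+} b$ and $b \preceq_{\HH^+} a$ with $a \neq b$, then both relations are forced to be strict, contradicting the asymmetry of $\prec_{\HH^+}$ from Proposition~\ref{prop:partial+order}. For transitivity applied to $a \preceq b \preceq c$, I split on whether each step is strict or an equality: equalities collapse the chain to the remaining relation, while two strict steps combine to give $a \prec_{\HH^+} c$ by the transitivity of $\prec_{\HH^+}$.

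Assume now that $\HH$ is stringent. Totality of $\preceq_{\HH^+}$ follows at once from totality of $\prec_{\HH^+}$ in Proposition~\ref{prop:partial+order} together with reflexivity. For compatibility, I show that $a \preceq b$ and $c \in \HH$ imply $a \boxplus c \preceq b \boxplus c$. The case $a = b$ is trivial as the sets agree, so assume $a \prec b$. Since $a \neq b$ means $b \neq -(-a)$, stringency forces $b \boxplus -a = \{d\}$ to be a singleton with $d \in \HH^+$. For arbitrary $a' \in a \boxplus c$ and $b' \in b \boxplus c$, if $a' = b'$ we are done by reflexivity; otherwise stringency makes $b' \boxplus -a' = \{e\}$ a singleton too, and the goal reduces to $e \in \HH^+$.

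The main obstacle is this final verification. I would leverage the classification $\HH \cong \HH_0 \rtimes G$ from Proposition~\ref{prop:ord+stringent+class} with $\HH_0 \in \{\SS, \FF\}$. Writing $a = (x,g)$, $b = (y,h)$, $c = (z,k)$ (with the zero cases handled directly), the relation $a \prec b$ translates via the explicit formula for $\boxplus$ in the semidirect product into one of three conditions on $(x,y,g,h)$: either $h > g$ with $y \in \HH_0^+$, or $g > h$ with $x \in \HH_0^-$, or $g = h$ with $x \prec_{\HH_0} y$. The elements $a'$ and $b'$ are likewise classified by the relative order of $g, h, k$, with multivalued sums arising precisely when $c = -a$ or $c = -b$. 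A finite case analysis over these possibilities, invoking the standard compatibility of $\prec$ on the base $\HH_0$ (the ordered field case for $\HH_0 = \FF$, or a direct check for $\HH_0 = \SS$), confirms $e \in \HH^+$ in every branch.
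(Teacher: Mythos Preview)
Your proposal is correct and follows essentially the same strategy as the paper: reduce the partial-order axioms to Proposition~\ref{prop:partial+order}, and for compatibility with addition invoke the classification of ordered stringent hyperfields (Proposition~\ref{prop:ord+stringent+class}) followed by a case analysis. The only organisational difference is that the paper first isolates the easy case $c\neq -a,-b$ (where both sums are singletons) by a direct containment argument, and then treats $c=-a$ (resp.\ $c=-b$) using the hyperfield-specific identities $x\boxplus -x\boxplus x=x$ in $\FF\rtimes G$ and $x\boxplus x=x$ in $\SS\rtimes G$, whereas you propose to unpack everything into coordinates $(x,g),(y,h),(z,k)$ from the start and run a uniform case split on the relative order of $g,h,k$. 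Both routes work; the paper's use of those identities avoids some of the coordinate bookkeeping, while your approach is more systematic but heavier.
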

\begin{proof}
All the properties of Proposition~\ref{prop:partial+order} carry over, just remains to show that $\preceq_{\HH^+}$ is compatible with addition when $\HH$ is stringent.
To ease notation, we drop the subscript from $\preceq$.

If $a = b$ then we immediately have that $a \boxplus c = b \boxplus c$.
Therefore we assume that $a \prec b$, in particular $a \neq b$.
When $c \neq -a, -b$, then $a \boxplus c$ and $b \boxplus c$ are singletons, as such
\[
b \boxplus -a \subseteq (b \boxplus c) \boxplus (-a \boxplus -c) \in \HH^+ \, .
\]
We show the case when $c = -a$, the proof for $c = -b$ is very similar.

When $\HH = \FF \rtimes G$, we use the identity $x \boxplus -x \boxplus x = x$: this is straightforward to verify over this hyperfield.
Then for any $k \in a \boxplus -a$, we have
\[
(b \boxplus -a) \boxplus -k \subseteq b \boxplus -a \boxplus a \boxplus -a = b \boxplus -a \subseteq \HH^+ \, .
\]
This implies that $k \prec b \boxplus -a$ for all $k \in a \boxplus -a$, and so $a \boxplus -a \prec b \boxplus -a$.

When $\HH = \SS \rtimes G$, we split into two cases: $-a \prec b$ and $-a \succ b$.
In the former case, we observe that $b \boxplus -a$ and $b \boxplus a$ are contained in $\HH^+$.
Using this along with the identity $x \boxplus x = x$ in $\SS \rtimes G$, for any $k \in a \boxplus -a$ we have
\[
(b \boxplus -a) \boxplus -k \subseteq (b \boxplus -a) \boxplus (-a \boxplus a) = b \boxplus b \boxplus -a \boxplus a \subseteq \HH^+ \, .
\]
As such, $a \boxplus -a \prec b \boxplus -a$.

In the latter case, we have that $a \prec b \prec -a$.
This implies that $-a \boxplus b$ and $-a \boxplus -b$ are both contained in $\HH^+$.
As this remains invariant regardless of the sign of $b$, addition on $\SS \rtimes G$ implies that
\[
-a \boxplus b = -a \boxplus -b = -a \in \HH^+ \, .
\]
Moreover, for any $k \in a \boxplus -a$, we have
\[
-a \boxplus b \boxplus -k = -a \boxplus -k = \begin{cases}
    -a \in \HH^+ & k \neq -a \\
    a \boxplus -a & k = -a \\
\end{cases} \, ,
\]
i.e. $k \preceq b \boxplus -a$ for all $k \in a \boxplus -a$, and therefore $a \boxplus -a \preceq b \boxplus -a$.

\end{proof}
If $\HH^+$ is implicit, we shall drop the subscript from $\prec$ and $\preceq$ from now on.

\begin{example}\label{ex:balanced+order}
We return to Example~\ref{ex:signed+tropical+order} and the ordering on the signed tropical hyperfield $\TT\RR$.
The strict order $\prec_{\TT\RR^+}$ defined in~\eqref{eq:signed+tropical+order} can extended to a non-strict order $\preceq_{\TT\RR^+}$ via $a \preceq b$ if and only if $a \prec b$ or $a =b$.
Moreover, we can now partially order sets in $\TT\RR$ that arise as sums of elements and their inverses:
\begin{align*}
(1,b) \boxplus (-1,b) \preceq_{\TT\RR^+} (a,c) \quad &\Leftrightarrow \quad a = 1 \, , \, b \leq c \, , \\
(a,b) \preceq_{\TT\RR^+} (1,c) \boxplus (-1,c) \quad &\Leftrightarrow \quad a = -1 \, , \, b \geq c \, .
\end{align*}
Note that no two distinct sets are comparable under this ordering.

This gives rise to the partial order relation on the symmetrized tropical semiring~\cite{AGG:14,LV}.
This is the semiring obtained by adjoining a copy of the `balanced' tropical numbers to the signed tropical semiring: these balanced tropical numbers precisely correspond to the sets of the form $a \boxplus -a$.
\end{example}

\begin{remark}
    Compatibility with addition has much more complexity over hyperfields, due to issues with extending $\preceq$ to sets.
    As we have defined it, $\preceq$ forms an partial order on the level of sets but where most sets are incomparable.
    Other extensions to sets that allow for more comparability have been investigated, but may not give an order relation; see~\cite[Section 2]{LinziStoj} for example.
    We also sidestep some additional complexity by only considering compatibility with addition on stringent hyperfields, as we only ever compare sets when there is some cancellation. 
\end{remark}

\begin{remark}
    There are other natural ways one can define an order relation on $\HH$ from $\HH^+$, but each has its drawbacks.
    For example, one can consider the relation
    \[
    a \leq b \Leftrightarrow (b \boxplus -a) \cap (\HH^+ \cup \{\0\}) \neq \emptyset \, .
    \]
    When $\HH$ is stringent, this defines exactly the same order relation as $\preceq$.
    When $\HH$ is not stringent, it was noted in~\cite{LinziStoj} that this relation does not satisfy anti-symmetry as the following example demonstrates.
    
    Recall the hyperfield $\HH = \QQ^\times/(\QQ^\times)^2$ introduced in Example~\ref{ex:rational+quotient}.
    We observe that $\bar{2} \leq \bar{3}$ and $\bar{3} \leq \bar{2}$:
    \begin{align*}
        1 = 3\cdot 1 + (-2)\cdot 1 \in 3\cdot(\QQ^\times)^2 + (-2)\cdot(\QQ^\times)^2 \Rightarrow 2\cdot(\QQ^\times)^2 \leq 3\cdot(\QQ^\times)^2 \\
        5 = 2\cdot 4 + (-3)\cdot 1 \in 2\cdot(\QQ^\times)^2 + (-3)\cdot(\QQ^\times)^2 \Rightarrow 3\cdot(\QQ^\times)^2 \leq 2\cdot(\QQ^\times)^2
    \end{align*}
    However $2\cdot(\QQ^\times)^2 \neq 3\cdot(\QQ^\times)^2$: this contradicts anti-symmetry.
\end{remark}

We end this section by relating order preserving homomorphisms and order relations on hyperfields.
In the following, let $\preceq_i$ denote the order relation on $\HH_i$ induced by $\HH^+_i$.
\begin{proposition}\label{prop:ord-f-rel-pf}
Let $f: \HH_1 \rightarrow \HH_2$ be a homomorphism where $\HH_2$ is stringent.
Then $f$ is order preserving if and only if
\[
a \preceq_1 b \, \Rightarrow \, f(a) \preceq_2 f(b) \quad \forall a,b \in \HH_1 \, .
\]
\end{proposition}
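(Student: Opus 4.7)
The plan is to prove both implications, with the forward direction relying on stringency of $\HH_2$ and the backward direction being elementary. Throughout I will use the general fact that for any hyperfield homomorphism $f$, one has $f(-a) = -f(a)$: since $\0_2 = f(\0_1) \in f(a \boxplus -a) \subseteq f(a) \boxplus f(-a)$, uniqueness of additive inverses forces $f(-a) = -f(a)$. In particular, $f(b \boxplus -a) \subseteq f(b) \boxplus -f(a)$, and the containment can be strict in general.

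For the backward direction, assume $a \preceq_1 b \Rightarrow f(a) \preceq_2 f(b)$ and take $a \in \HH_1^+$. Then $a \boxplus -\0_1 = \{a\} \subseteq \HH_1^+$, so $\0_1 \prec_1 a$ and therefore $\0_1 \preceq_1 a$. Applying the hypothesis gives $\0_2 = f(\0_1) \preceq_2 f(a)$. Since $a \neq \0_1$ and hyperfield homomorphisms satisfy $f^{-1}(\0_2) = \{\0_1\}$ (as recorded after the definition of homomorphism in Section~\ref{sec:background}), we have $f(a) \neq \0_2$. Thus $\0_2 \prec_2 f(a)$, i.e., $\{f(a)\} = f(a) \boxplus \0_2 \subseteq \HH_2^+$, showing $f(\HH_1^+) \subseteq \HH_2^+$. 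Note that this direction does not use stringency at all.

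For the forward direction, assume $f$ is order preserving and $a \preceq_1 b$. If $a = b$, then $f(a) = f(b) \preceq_2 f(b)$ and we are done. Otherwise $a \prec_1 b$, so $b \boxplus -a \subseteq \HH_1^+$, and order preservation gives $f(b \boxplus -a) \subseteq f(\HH_1^+) \subseteq \HH_2^+$. Since hypersums are non-empty, $f(b \boxplus -a)$ is a non-empty subset of $f(b) \boxplus -f(a)$. I now split on whether $f(a) = f(b)$: in that case $f(a) \preceq_2 f(b)$ is trivial; otherwise, stringency of $\HH_2$ forces $f(b) \boxplus -f(a)$ to be a singleton $\{c\}$, and the non-empty subset $f(b \boxplus -a) \subseteq \HH_2^+$ must equal $\{c\}$, placing $c \in \HH_2^+$. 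Hence $f(b) \boxplus -f(a) \subseteq \HH_2^+$, so $f(a) \prec_2 f(b)$.

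The main obstacle is precisely the possibly strict containment $f(b \boxplus -a) \subsetneq f(b) \boxplus -f(a)$ permitted by the homomorphism axioms. Without stringency of $\HH_2$, knowing the image of the hypersum lands in $\HH_2^+$ does not suffice to control the full hypersum in $\HH_2$, and the implication in the forward direction need not hold. Stringency of $\HH_2$ closes this gap by collapsing $f(b) \boxplus -f(a)$ to a singleton whenever the two summands are not mutually inverse.
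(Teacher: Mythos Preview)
Your proof is correct and follows essentially the same approach as the paper's. The only cosmetic difference in the forward direction is that you split explicitly on whether $f(a)=f(b)$ (so that stringency forces $f(b)\boxplus -f(a)$ to be a singleton in the remaining case), whereas the paper phrases the same step as ``either $f(b)\boxplus -f(a)\subseteq \HH_2^+$ or $\0\in f(b)\boxplus -f(a)$''; these are equivalent uses of stringency, and your backward direction is identical to the paper's.
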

\begin{proof}
First assume that $f$ is order preserving.
If $a = b$ then $f(a) = f(b)$ immediately.
By definition $a \prec_1 b$ if and only if $b \boxplus -a \subseteq \HH_1^+$. 
As $f$ is order preserving 
\[
f(b \boxplus -a) \subseteq f(b) \boxplus -f(a) \, \Rightarrow \, \left(f(b) \boxplus -f(a)\right) \cap \HH_2^+ \neq \emptyset \, .
\]
As $\HH_2$ is stringent, either $f(b) \boxplus -f(a) \subseteq \HH^+_2$ or $\0 \in f(b) \boxplus -f(a)$, i.e. $f(a) \preceq_2 f(b)$.

To see the converse, let $a \in \HH_1^+$.
Then $\0_1 \prec_1 a$, implying that $\0_2 \preceq_2 f(a)$.
As $f$ is a hyperfield homomorphism, the only element that maps to $\0_2$ is $\0_1$ and so $\0_2 \prec_2 f(a)$, implying that $f(a) \in \HH_2^+$.
\end{proof}

\begin{remark}
    The condition that $\HH_2$ is stringent cannot be loosened, as the following example shows.
    Consider the hyperfield $\HH = \QQ/(\QQ^\times)^2$ from Example~\ref{ex:rational+quotient}.
    The quotient map $\tau\colon\QQ \rightarrow\HH$ is an order-preserving homomorphism, as $(\QQ^\times)^2 \subseteq \QQ^+$.
    However, the order relation is not preserved: Example~\ref{ex:rational+quotient} showed that $2\cdot(\QQ^\times)^2$ and $3\cdot(\QQ^\times)^2$ are incomparable by $\prec$.
    Moreover, $2\cdot(\QQ^\times)^2 \neq 3\cdot(\QQ^\times)^2$, and so two elements are incomparable in $\HH$, despite $2 \leq 3$ over $\QQ$.
\end{remark}

\bibliographystyle{plain}
\bibliography{References}

\end{document}